\newtheorem{theorem}{Theorem}[section]
\newtheorem{corollary}[theorem]{Corollary}
\newtheorem{lemma}[theorem]{Lemma}
\newtheorem*{fact*}{Fact}
\newtheorem{proposition}[theorem]{Proposition}
\theoremstyle{definition}
\newtheorem*{claim}{Claim}
\newtheorem{remark}[theorem]{Remark}
\numberwithin{equation}{section}
\newtheorem*{algorithm}{Algorithm SVS}
\newcommand{\N}{\mathbb N}
\newcommand{\Z}{\mathbb Z}
\newcommand{\F}{\mathbb F}
\newcommand{\R}{\mathbb R}
\newcommand{\M}{{\sf M}}
\newcommand{\FF}{{\sf F}}
\newcommand{\ds}{\displaystyle}
\newcommand{\bfs}{\boldsymbol}
\newcommand{\fq}{\F_{\hskip-0.7mm q}}
\newcommand{\cfq}{\overline{\F}_{\hskip-0.7mm q}}
\def\ifm#1#2{\relax \ifmmode#1\else#2\fi}
\newcommand{\stirling}[2]{\genfrac{[}{]}{0pt}{}{#1}{#2}}
\newcommand{\klk}    {\ifm {,\ldots,} {$,\ldots,$}}
\newcommand{\plp}    {\ifm {+\cdots+} {$+\ldots+$}}
\begin{document}

\title[Computation of rational points on hypersurfaces]{On the
computation of rational points of a hypersurface over a finite
field}
\author[G. Matera]{Guillermo Matera${}^{1,2}$}%

\author[M. P\'erez]{Mariana P\'erez${}^1$}

\author[M. Privitelli]{Melina Privitelli${}^3$}

\address{${}^{1}$Instituto del Desarrollo Humano,
Universidad Nacional de Gene\-ral Sarmiento, J.M. Guti\'errez 1150
(B1613GSX) Los Polvorines, Buenos Aires, Argentina}
\email{\{gmatera,\,vperez\}@ungs.edu.ar}
\address{${}^{2}$ National Council of Science and Technology (CONICET),
Ar\-gentina}
\address{${}^{3}$Instituto de Ciencias,
Universidad Nacional de Gene\-ral Sarmiento, J.M. Guti\'errez 1150
(B1613GSX) Los Polvorines, Buenos Aires, Argentina}
\email{mprivite@ungs.edu.ar}

\thanks{The authors were partially supported by the grants PIP CONICET
11220130100598, PIO CONICET-UNGS 14420140100027 and UNGS 30/3084}%
\subjclass{68W40, 11G25, 14G05, 14G15}%
\keywords{Finite fields, hypersurfaces, rational points,
algorithms, average--case complexity, probability distribution,
value sets, Shannon entropy}%
\date{\today}
\maketitle

\begin{abstract}
We design and analyze an algorithm for computing rational points of
hypersurfaces defined over a finite field based on searches on
``vertical strips'', namely searches on parallel lines in a given
direction. 
Our results show that, on average, less than two searches suffice to
obtain a rational point. We also analyze the probability
distribution of outputs, using the notion of Shannon entropy, and
prove that the algorithm is somewhat close to any ``ideal''
equidistributed algorithm.
\end{abstract}
%
%
\section{Introduction}\label{section: intro}
Let $\fq$ be the finite field of $q$ elements, $X_1,\dots,X_r$
indeterminates over $\fq$ and $\fq[X_1,\dots,X_r]$ the ring of
polynomials in $X_1,\dots,X_r$ with coefficients in $\fq$. Let
$\mathcal{F}_{r,d}:=\{F \in \fq[X_1,\dots, X_r]: \deg(F)\leq d\}$.
Suppose that $r\ge 2$ and $d\ge 2$, and let $F$ be an element of
$\mathcal{F}_{r,d}$. In this paper we address the problem of finding
an $\fq$--rational zero of $F$, namely a point $\bfs{x}\in\fq^r$
with $F(\bfs{x})=0$.

It is well--known that the elements of $\mathcal{F}_{r,d}$ have
$q^{r-1}$ zeros in $\fq^r$ on average. More precisely, we have the
following result (see, e.g., \cite[Theorem 6.16]{LiNi83}):
\begin{equation}\label{eq: average number zeros}
\frac{1}{|\mathcal{F}_{r,d}|}\sum_{F\in\mathcal{F}_{r,d}}
N(F)=q^{r-1},
\end{equation}
where $N(F):=|\{\bfs x\in\fq^{r}:F(\bfs x)=0\}|$. 
%
This suggests a strategy to find an $\fq$--rational zero of a given
$F\in\mathcal{F}_{r,d}$. Since the expected number of zeros of $F$
is equal to the cardinality of $\fq^{r-1}$, given $\bfs
a_1\in\fq^{r-1}$, one may try to find a zero of $F$ having $\bfs
a_1$ as its first $r-1$ coordinates. If the polynomial $F(\bfs
a_1,X_r)$ has no zeros in $\fq$, then a further element $\bfs
a_2\in\fq^{r-1}$ can be picked up to see whether $F(\bfs a_2,X_r)$
has a zero in $\fq$. The algorithm proceeds in this way until a zero
of $F$ in $\fq^r$ is obtained.

Following the terminology of \cite{GaShSi03}, which considers the
case $r=2$, each set $\{\bfs a_i\}\times \fq$ is called a ``vertical
strip''. Therefore, our algorithm, which extends the one of
\cite{GaShSi03} to $r$--variate polynomials, is called ``Search on
Vertical Strips'' (SVS for short), and is described as follows.
\begin{algorithm} ${}$

Input: a polynomial $F\in\mathcal{F}_{r,d}$.

Output: either a zero $\bfs x\in\fq^r$ of $F$, or ``failure''.

Set $i:=1$ and $f:=1$

While $1\leq i\leq q^{r-1}$ and $f=1$ do
\begin{enumerate}
  \item[] Choose at random $\bfs a_i\in\fq^{r-1}\setminus\{\bfs a_1\klk \bfs a_{i-1}\}$
  \item[] Compute $f:=\gcd(F(\bfs a_i,X_r),X_r^q-X_r)$
  \item[] If $f=0$, then choose $x_{r,i}\in\fq$ at random
  \item[] If $f\notin\{0,1\}$, then compute a root $x_{r,i}\in\fq$ of
  $f$
  \item[] $i:=i+1$
\end{enumerate}

End While

If $f\not=1$ return $(\bfs a_i,x_{r,i})$, else return ``failure''.
\end{algorithm}

Ignoring the cost of random generation of elements of $\fq^{r-1}$,
at the $i$th step of the main loop we compute the vector of
coefficients of the polynomial $F(\bfs a_i,X_r)$. Since an element
of $\mathcal{F}_{r,d}$ has $D:=\binom{d+r}{r}$ coefficients, the
number of arithmetic operations in $\fq$ required to compute such a
vector is $\mathcal{O}^\sim(D)$, where the notation
$\mathcal{O}^\sim$ ignores logarithmic factors. Throughout this
paper, all asymptotic estimates are valid for fixed $d$ and $r$, and
$q$ growing to infinity. Then the gcd $f$ is computed, and a root of
$f$ in $\fq$ is determined, provided that $f\not=1$. This can be
done with $\mathcal{O}^\sim(d\,\log_2q)$ arithmetic operations in
$\fq$ (see, e.g., \cite[Corollary 14.16]{GaGe99}). As a consequence,
for a choice $\underline{\bfs a}:=(\bfs a_1\klk \bfs a_{q^{r-1}})$
for the vertical strips to be considered, the whole procedure
requires $\mathcal{O}^\sim\big(C_{\underline{\bfs a}}(F)\cdot
(D+d\log_2q)\big)$ arithmetic operations in $\fq$, where
$C_{\underline{\bfs a}}(F)$ is the least value of $i$ for which
$F(\bfs a_i,X_r)$ has a zero in $\fq$.

This paper is devoted to analyze the SVS algorithm from a
probabilistic point of view. As its behavior is essentially
determined by the number of vertical strips which must be
considered, we analyze, for a given $s\ge 1$, the probability
distribution of the number of searches performed by the algorithm.
For this purpose, we consider the set $\FF$ of all possible choices
of vertical strips and the random variable $C_{r,d}:\FF\times
\mathcal{F}_{r,d}\mapsto \N$ which counts the number of vertical
strips that are searched. We prove that the probability that $s$
vertical strips are searched, for ``moderate'' values of $s$,
satisfies the estimate
\begin{equation}\label{eq: estimate prob s searches intro}
P[C_{r,d}=s]=(1-\mu_d)^{s-1}\mu_d+\mathcal{O}(q^{-1/2}),
\end{equation}
where $\mu_d:=\sum_{j=1}^d(-1)^{j-1}/j!$. Observe that $\mu_d\approx
1-e^{-1}= 0.6321\ldots$ for large $d$, where $e$ denotes the basis
of the natural logarithm. We remark that the quantity $\mu_d$ arises
also in connection with a classical combinatorial notion over finite
fields, that of the value set of univariate polynomials (cf.
\cite{LiNi83}, \cite{MuPa13}). For a polynomial $f\in\fq[T]$, denote
by $\mathcal{V}(f):=|\{f(c):c\in\fq\}|$ the cardinality of the value
set of $f$. In \cite{BiSD59}, Birch and Swinnerton--Dyer established
the following classical result: if $f\in\fq[T]$ is a generic
polynomial of degree $d$, then
$\mathcal{V}(f)=\mu_d\,q+\mathcal{O}(1)$.

The estimate \eqref{eq: estimate prob s searches intro} relies on
the analysis of the behavior of the SVS algorithm for a fixed choice
$\bfs{a}_1,\dots,\bfs{a}_s\in\fq^{r-1}$ for the first $s$ vertical
strips. 
It turns out that the probability that the $s$ vertical strips
under consideration are searched is essentially that of the
right--hand side of \eqref{eq: estimate prob s searches intro}. As a
side note, this may be considered as a ``realistic'' version of the
SVS algorithm in the sense of \cite{Bach91}. As the author states,
``when a randomized algorithm is implemented, one always uses a
sequence whose later values come from earlier ones in a
deterministic fashion. This invalidates the assumption of
independence and might cause one to regard results about
probabilistic algorithms with suspicion.'' Our results show that the
probabilistic behavior of the SVS algorithm is not essentially
altered when a fixed choice of vertical strips is considered.

As a consequence of \eqref{eq: estimate prob s searches intro} we
obtain an upper bound on the average--case complexity $E[X]$ of the
SVS algorithm, where $X:\FF\times \mathcal{F}_{r,d}\to\N$ is the
random variable that counts the number of arithmetic operations in
$\fq$ performed for a given choice of vertical strips on a given
input. We prove that
\begin{equation}\label{eq: average case complex intro}
E[X]\le \frac{1}{\mu_d}\tau(d,r, q)+\mathcal{O}(q^{-1/2}),
\end{equation}
where $\tau(d,r,q):=\mathcal{O}^\sim(D+d\log_2 q)$ is the cost of a
search in a single vertical strip. In other words, on average at
most $1/\mu_d\approx 1.58$ vertical strips must be searched to
obtain a rational zero of the polynomial under consideration.
Simulations we run suggest that the upper bound \eqref{eq: average
case complex intro} is close to optimal. We observe that the
probabilistic algorithms of \cite{GaShSi03} (for $r=2$) and
\cite{CaMa06a} and \cite{Matera10} (for general $r$) propose $d$
searches in order to achieve a probability of success greater than
1/2. Our result suggests that these analyses are somewhat
pessimistic.

On the other hand, it must be said that the result of
\cite{GaShSi03} holds for any bivariate polynomial, while that of
\cite{CaMa06a} is valid for any absolutely irreducible $r$--variate
polynomial. If the polynomials under consideration are produced by
some complicated process, it might be argued that our results do not
contribute to the analysis of the cost of the corresponding
algorithm to search for $\fq$--rational zeros. Nevertheless, a
crucial aspect of our approach is that we express the probability
$P[C_{r,d}=s]$ of \eqref{eq: estimate prob s searches intro}, and
thus the average--case complexity $E[X]$ of \eqref{eq: average case
complex intro}, in terms of the average cardinality of the value set
of certain families of univariate polynomials related to the set of
input polynomials under consideration. We believe that this
technique can be extended to deal with (linear or nonlinear)
families of polynomials of $\mathcal{F}_{r,d}$, provided that the
asymptotic behavior of the average cardinality of the corresponding
families of univariate polynomials is known (see \cite{CeMaPePr14},
\cite{MaPePr14} and \cite{MaPePr16} for results in connection with
this matter).

Another critical aspect to analyze is the distribution of outputs.
Given $F\in\mathcal{F}_{r,d}$, the SVS algorithm outputs an
$\fq$--rational zero of $F$, which is determined by certain random
choices made during its execution. As a consequence, it is relevant
to have insight on the probability distribution of outputs. For an
``ideal'' algorithm (from the point of view of distribution of
outputs), outputs should be equidistributed. For this reason, in
\cite{GaShSi03} the basic SVS strategy for bivariate polynomials
over $\fq$ is modified so that all $\fq$--rational zeros of the
input polynomial are equally probable outputs. Such a modification
can be also be applied to our algorithm.

Nevertheless, as this modification implies a certain slowdown, we
shall pursue a different course of action, analyzing the average
distribution of outputs by means of the concept of Shannon entropy.
If the output for an input polynomial $F$ tends to be concentrated
on a few $\fq$--rational zeros of $F$, then the ``amount of
information'' that we obtain might be said to be ``small''. On the
other hand, if all the $\fq$--rational zeros of $F$ are equally
probable outputs, then the amount of information provided by the
algorithm is considered to be larger. Following \cite{BePa11} (see
also \cite{BeLe12}), we define a Shannon entropy $H_F$ associated to
an input $F\in\mathcal{F}_{r,d}$ of the SVS algorithm, which
measures how ``concentrated'' are the corresponding outputs. Then we
analyze the average entropy $H$ when $F$ runs through all the
elements of $\mathcal{F}_{r,d}$.

For an ``ideal'' algorithm for computing $\fq$--rational zeros of
elements of $\mathcal{F}_{r,d}$ and $F\in\mathcal{F}_{r,d}$, it is
easy to see that $H^{\rm ideal}_F=\log N(F)$, where $\log$ denotes
the natural logarithm. It follows that
%
$$H^{\rm ideal}\le \log(q^{r-1})$$
%
(see \eqref{eq: bound entropy ideal algorithm}). Our main result
concerning the distribution of outputs asserts that
\begin{equation}\label{eq: average entropy intro}
H\ge \frac{1}{2\mu_d}\log(q^{r-1}) (1+\mathcal{O}(q^{-1})).
\end{equation}
Since ${1}/{2\mu_d}\approx 0.79$ for large $d$, we may paraphrase
\eqref{eq: average entropy intro} as saying that the SVS algorithm
is at least $79$ per cent as good as any ``ideal'' algorithm, from
the point of view of the distribution of the outputs.

The proof of \eqref{eq: average entropy intro} relies on an analysis
of the expected number of vertical strips of the elements of
$\mathcal{F}_{r,d}$ which may be of independent interested. Denote
by $NS(r,d)$ the average number of vertical strips with
$\fq$--rational zeros of $F$, when $F$ runs through all the elements
of $\mathcal{F}_{r,d}$. We prove that
\begin{equation}\label{eq: average number VS intro}
NS(r,d)=\mu_d\,q^{r-1}+\mathcal{O}(q^{r-2}).
\end{equation}
We also estimate the variance of the number of vertical strips with
$\fq$--rational zeros.

The paper is organized as follows. Section \ref{section: analysis
C=1 and C=2} is devoted to the analyses of the probability that one
or two vertical strips are searched. In Section \ref{section:
analysis fixed ordered VS} we estimate the expected number of
vertical strips to be searched for a given choice of $s\ge 3$
vertical strips. We express the probability that $s$ vertical strips
are searched in terms of average cardinalities of value sets and
apply estimates for the latter in order to establish an explicit
estimate of the former. In Section \ref{section: analysis of SVS
algorithm} we apply the results of Sections \ref{section: analysis
C=1 and C=2} and \ref{section: analysis fixed ordered VS} to
establish \eqref{eq: estimate prob s searches intro} and \eqref{eq:
average case complex intro}. Section \ref{section: entropy} is
concerned with the probability distribution of outputs. In
Subsection \ref{subsec: average number vertical strips} we establish
\eqref{eq: average number VS intro} and an estimate of the
corresponding variance. In Subsection \ref{subsec: bound entropy
with replacement} we apply these estimates to prove \eqref{eq:
average entropy intro}. Finally, in Section \ref{section:
simulations} we exhibit a few simulations aimed at confirming the
asymptotic results \eqref{eq: estimate prob s searches intro} and
\eqref{eq: average case complex intro}.
%
%
\section{Probability of success in the first two searches}
\label{section: analysis C=1 and C=2}
We start discussing how frequently one or two searches on vertical
strips suffice to find a zero of the input polynomial. As it will
become evident, this will happen in most cases. Therefore, accurate
estimates on the probability of these two cases is critical for an
accurate description of the behavior of the algorithm.
%
%
\subsection{Probability of success in the first search}
%
For integers $r\ge 2$ and $d\ge 2$, we shall estimate the
probability that the SVS algorithm, on input an element of
$\mathcal{F}_{r,d}:=\{F \in \fq[X_1,\dots, X_r]: \deg(F)\leq d\}$,
finds a root of it in the first vertical strip. As $r$ and $d$ are
fixed, we shall drop the indices $r$ and $d$ from the notations.

Each possible choice for the first vertical strip is determined by
an element of $\fq^{r-1}$. As a consequence, we may represent the
situation by means of the random variable
$C_1:=C_{1,r,d}:\fq^{r-1}\times \mathcal{F}_{r,d}\to\{1,\infty\}$
defined in the following way:
$$
C_1(\bfs a,F):=\left\{
\begin{array}{cl}
1&\textrm{if }F(\bfs a,X_r)\textrm{ has an }\fq\textrm{--rational
zero},\\ \!\!\infty&\textrm{otherwise}.
\end{array}\right.
$$
We consider the set $\fq^{r-1}\times \mathcal{F}_{r,d}$ endowed with
the uniform probability $P_1:=P_{1,r,d}$ and study the probability
of the set $\{C_1=1\}$. The next result provides an exact formula
for this probability.
\begin{theorem}\label{th: prob C=1}
For $q>d$, we have the identity
\begin{align*}
P_1[C_1=1]=\sum_{j=1}^d(-1)^{j-1}\binom{q}{j}q^{-j}
+(-1)^d\binom{q-1}{d}q^{-d-1}.
\end{align*}
\end{theorem}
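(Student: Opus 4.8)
The plan is to compute $P_1[C_1=1]$ by counting, for each $\bfs a\in\fq^{r-1}$, the number of polynomials $F\in\mathcal{F}_{r,d}$ such that the univariate polynomial $F(\bfs a,X_r)$ has a root in $\fq$. The key observation is that the map $F\mapsto F(\bfs a,X_r)$, viewed as a map from $\mathcal{F}_{r,d}$ to $\mathcal{F}_{1,d}=\{f\in\fq[X_r]:\deg f\le d\}$, is surjective and all its fibers have the same cardinality $|\mathcal{F}_{r,d}|/|\mathcal{F}_{1,d}| = q^{D-d-1}$; this is because the coefficients of $F(\bfs a,X_r)$ are $\fq$--linear functions of the coefficients of $F$ that can be completed to a coordinate system. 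Consequently $P_1[C_1=1]$ equals the probability, over a uniformly random $f\in\mathcal{F}_{1,d}$, that $f$ has an $\fq$--rational zero; in particular it does not depend on $\bfs a$, and the problem is reduced to a purely univariate counting question.

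Next I would count $|\{f\in\mathcal{F}_{1,d}: f \text{ has a root in }\fq\}|$ by inclusion--exclusion on which elements of $\fq$ are roots. For a subset $S\subseteq\fq$, let $A_S$ be the set of $f\in\mathcal{F}_{1,d}$ vanishing on all of $S$. A polynomial of degree $\le d$ vanishing on a prescribed set of $j$ points is determined by writing $f(X_r)=\prod_{c\in S}(X_r-c)\cdot g(X_r)$ with $\deg g\le d-j$ when $j\le d$, so $|A_S| = q^{d-j+1}$ for $|S|=j\le d$, while $|A_S|=1$ (only the zero polynomial) when $|S|=d+1$ — here the condition $q>d$ guarantees there are genuinely $q$ candidate roots and that these count formulas are the right ones. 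By inclusion--exclusion,
\[
|\{f : f \text{ has a root}\}| = \sum_{j=1}^{q}(-1)^{j-1}\binom{q}{j}|A_{S_j}|,
\]
where $S_j$ is any $j$--element set. Splitting off the terms $j\le d$ from the terms $j\ge d+1$, the first block contributes $\sum_{j=1}^d(-1)^{j-1}\binom{q}{j}q^{d-j+1}$, and dividing by $|\mathcal{F}_{1,d}|=q^{d+1}$ gives the main sum $\sum_{j=1}^d(-1)^{j-1}\binom{q}{j}q^{-j}$.

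The remaining task is to show that the tail $\sum_{j=d+1}^{q}(-1)^{j-1}\binom{q}{j}\cdot 1$ collapses to the stated correction term. For this I would use the elementary identity $\sum_{j=0}^{q}(-1)^j\binom{q}{j}=0$ together with the partial-sum identity $\sum_{j=0}^{m}(-1)^j\binom{q}{j}=(-1)^m\binom{q-1}{m}$; applied with $m=d$ this yields $\sum_{j=d+1}^{q}(-1)^j\binom{q}{j} = -(-1)^d\binom{q-1}{d}$, hence $\sum_{j=d+1}^{q}(-1)^{j-1}\binom{q}{j} = (-1)^d\binom{q-1}{d}$. Dividing by $q^{d+1}$ produces exactly $(-1)^d\binom{q-1}{d}q^{-d-1}$, completing the identity. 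I do not anticipate a serious obstacle here; the only points requiring care are the verification that the fibers of $F\mapsto F(\bfs a,X_r)$ are equinumerous (so that $\bfs a$ drops out and the uniform probability on $\fq^{r-1}\times\mathcal{F}_{r,d}$ reduces to the uniform probability on $\mathcal{F}_{1,d}$), and the bookkeeping in the inclusion--exclusion when $j$ exceeds $d$, where one must remember that $\binom{q}{j}$ still counts the $j$--subsets but $|A_{S_j}|=1$ rather than $q^{d-j+1}$.
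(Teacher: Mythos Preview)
Your proposal is correct but follows a different route from the paper. You first observe that the linear map $F\mapsto F(\bfs a,X_r)$ from $\mathcal{F}_{r,d}$ onto $\mathcal{F}_{1,d}$ is surjective with equal-sized fibers, which immediately reduces $P_1[C_1=1]$ to the probability that a uniformly random $f\in\mathcal{F}_{1,d}$ has an $\fq$-rational zero; you then compute the latter by inclusion--exclusion on the set of roots. The paper instead fixes $F$, writes $NS(F)=\big|\bigcup_{x\in\fq}\{\bfs a\in\fq^{r-1}:F(\bfs a,x)=0\}\big|$, applies inclusion--exclusion over subsets $\mathcal{X}_j\subset\fq$ to this union of sets of $\bfs a$'s, and only then interchanges the sums over $F$ and over $\mathcal{X}_j$, counting the $F$'s satisfying $j$ (or, when $j>d$, exactly $d+1$) linearly independent conditions. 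Both arguments end with the same alternating-sum identity $\sum_{j=d+1}^q(-1)^{j-1}\binom{q}{j}=(-1)^d\binom{q-1}{d}$. Your reduction is conceptually cleaner and shorter; the paper's approach has the side benefit of introducing and computing the average of $NS(F)$, a quantity it needs again later (Lemma~\ref{lemma: average number vertical strips} and the entropy analysis). The paper in fact notes your identity $p_{1,d}[N_{1,d}>0]=P_1[C_1=1]$ as a remark after the proof. One small wording issue: you state $|A_S|=1$ ``when $|S|=d+1$'' but then (correctly) use $|A_{S_j}|=1$ for all $j\ge d+1$; it would be cleaner to say this holds for every $|S|>d$, which is where the hypothesis $q>d$ enters.
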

\begin{proof}
For any $F\in\mathcal{F}_{r,d}$, we denote by $VS(F)$ the set of
vertical strips where $F$ has an $\fq$--rational zero and by $NS(F)$
its cardinality, that is,
$$VS(F):=\{\bfs a\in\fq^{r-1}:(\exists\, x_r\in\fq)\ F(\bfs
a,x_r)=0\},\quad NS(F):=|VS(F)|.$$
It is easy to see that
$\{C_1=1\}=\bigcup_{F\in\mathcal{F}_{r,d}}VS(F)\times\{F\}.$
Since this is a union of disjoint subsets of $\fq^{r-1}\times
\mathcal{F}_{r,d}$, it follows that
\begin{equation}\label{eq: prob C=1 in terms of number vert strips}
P_1[C_1=1]=\frac{1}{q^{r-1}|\mathcal{F}_{r,d}|}
\sum_{F\in\mathcal{F}_{r,d}}NS(F).
\end{equation}

Fix $F\in\mathcal{F}_{r,d}$. Observe that
$$VS(F)=\bigcup_{x\in\fq}\{\bfs a\in\fq^{r-1}:F(\bfs a,x)=0\}.$$
As a consequence, by the inclusion--exclusion principle we obtain
\begin{align*}
NS(F)&=\Bigg|\bigcup_{x\in\fq}\{\bfs a\in\fq^{r-1}:F(\bfs a,x)=0\}
\Bigg|\\
&=\sum_{j=1}^q(-1)^{j-1}\sum_{\mathcal{X}_j\subset\fq} \big|\{\bfs
a\in\fq^{r-1}:(\forall x\in\mathcal{X}_j)\,F(\bfs a,x)=0\}\big|,
\end{align*}
where $\mathcal{X}_j$ runs through all the subsets of $\fq$ of
cardinality $j$. We conclude that
$$\sum_{F\in\mathcal{F}_{r,d}}NS(F)=\sum_{F\in
\mathcal{F}_{r,d}}\sum_{j=1}^q(-1)^{j-1}\sum_{\mathcal{X}_j\subset\fq}
\big|\{\bfs a\in\fq^{r-1}:(\forall x\in\mathcal{X}_j)\,F(\bfs
a,x)=0\}\big|.$$

For any $j$ with $1\le j\le q$, we denote
$$\mathcal{N}_j:=\frac{1}{q^{r-1}|\mathcal{F}_{r,d}|}\sum_{F\in
\mathcal{F}_{r,d}}\sum_{\mathcal{X}_j\subset\fq} \big|\{\bfs
a\in\fq^{r-1}:(\forall x\in\mathcal{X}_j)\,F(\bfs a,x)=0\}\big|,$$
where $\mathcal{X}_j$ runs through all the subsets of $\fq$ of
cardinality $j$. If $j\le d$ and $\bfs a$ is fixed, then the
equalities $F(\bfs a,x)=0$ ($x\in\mathcal{X}_j$) are $j$
linearly--independent conditions on the coefficients of $F$ in the
$\fq$--vector space $\mathcal{F}_{r,d}$. It follows that
\begin{align}
\mathcal{N}_j&=\frac{1}{q^{r-1}|\mathcal{F}_{r,d}|}\sum_{\mathcal{X}_j\subset\fq}
\sum_{\bfs a\in \fq^{r-1}} \big|\{F\in\mathcal{F}_{r,d}:(\forall
x\in\mathcal{X}_j)\,F(\bfs a,x)=0\}\big|\nonumber \\
&= \frac{1}{q^{r-1+\dim
\mathcal{F}_{r,d}}}\sum_{\mathcal{X}_j\subset\fq} \sum_{\bfs a\in
\fq^{r-1}}q^{\dim \mathcal{F}_{r,d}-j}=\binom{q}{j}q^{-j}.
\label{eq: aux proof th prob C=1 - 1}
\end{align}
On the other hand, if $j>d$, then $F(\bfs a,x)=0$ for every $x\in
\mathcal{X}_j$ if and only if $F(\bfs a,X_r)=0$. The condition
$F(\bfs a,X_r)=0$ is expressed by means of $d+1$
linearly--independent linear equations on the coefficients of $F$ in
$\mathcal{F}_{r,d}$. We conclude that
\begin{align}
\mathcal{N}_j= \frac{1}{q^{r-1+\dim
\mathcal{F}_{r,d}}}\sum_{\mathcal{X}_j\subset\fq} \sum_{\bfs a\in
\fq^{r-1}}q^{\dim \mathcal{F}_{r,d}-(d+1)}=\binom{q}{j}q^{-d-1}.
\label{eq: aux proof th prob C=1 - 2}
\end{align}
Combining \eqref{eq: aux proof th prob C=1 - 1} and \eqref{eq: aux
proof th prob C=1 - 2} we obtain
$$
P_1[C_1=1]=\sum_{j=1}^q(-1)^{j-1}\mathcal{N}_j=\sum_{j=1}^d(-1)^{j-1}\binom{q}{j}q^{-j}+
\sum_{j=d+1}^q(-1)^{j-1} \binom{q}{j}q^{-d-1}.$$
Finally, since
\begin{equation}\label{eq: identities combinatorial numbers}
\sum_{j=d+1}^q(-1)^{j-1} \binom{q}{j}=
\sum_{j=0}^d(-1)^j\binom{q}{j}=(-1)^d\binom{q-1}{d}
\end{equation}
(see, e.g., \cite[(5.16)]{GrKnPa94}), we readily deduce the
statement of the theorem.
\end{proof}

Next we discuss the asymptotic behavior of the probability
$P_1[C_1=1]$. Fix $d\ge 2$. From Theorem \ref{th: prob C=1} it can
be seen that
$$P_1[C_1=1]=\mu_d+\mathcal{O}(q^{-1}),\quad \mu_d:=
\sum_{j=1}^d\frac{(-1)^{j-1}}{j!}.$$
To show this, given positive integers $k,j$ with $k\le j$, we shall
denote by $\stirling{j}{k}$ the unsigned Stirling number of the
first kind, namely the number of permutations of $j$ elements with
$k$ disjoint cycles. The following properties of the Stirling
numbers are well--known (see, e.g., \cite[\S A.8]{FlSe08}):
$$\stirling{j}{j}=1,\ \ \stirling{j}{{j-1}}=\binom{j}{2},\ \
\sum_{k=0}^j\stirling{j}{k}=j!.$$
%
%
We shall also use the following well--known identity (see, e.g.,
\cite[(6.13)]{GrKnPa94}):
\begin{equation}\label{eq: binomial in terms of stirling numbers}
\binom {q}{j}=\sum_{k=0}^j\frac{(-1)^{j-k}}{j!}\stirling{j}{k}q^k.
\end{equation}
According to Theorem \ref{th: prob C=1} and \eqref{eq: binomial in
terms of stirling numbers}, we have
\begin{align*}
P_1[C_1=1]&= \sum_{j=1}^d(-1)^{j-1}\sum_{k=0}^j
\frac{(-1)^{j-k}}{j!}\stirling{j}{k}q^{k-j}+(-1)^d\binom{q-1}{d}q^{-d-1}
\\
&= \sum_{j=1}^d \frac{(-1)^{j-1}}{j!}\stirling{j}{j}+ \sum_{j=1}^d
\frac{(-1)^j}{j!}\stirling{j}{j-1}q^{-1}\\&\quad+
\sum_{j=1}^d\sum_{k=0}^{j-2}
\frac{(-1)^{k-1}}{j!}\stirling{j}{k}q^{k-j}+(-1)^d
\binom{q-1}{d}q^{-d-1}.
\end{align*}
It follows that
$$
P_1[C_1=1]=\mu_d+\frac{1}{q}\sum_{j=1}^d
\frac{(-1)^j}{j!}\binom{j}{2}- \sum_{j=1}^d\sum_{k=0}^{j-2}
\frac{(-1)^k}{j!}\stirling{j}{k}q^{k-j} +\frac{(-1)^d}{q^{d+1}}
\binom{q-1}{d}.
$$
As a consequence, for $d>2$ we obtain
\begin{align*}
\left|P_1[C_1=1]-\mu_d\right|&\le \frac{1}{q}\Bigg|\sum_{j=1}^d
\frac{(-1)^j}{j!}\binom{j}{2}\Bigg|+ \sum_{j=1}^d\sum_{k=0}^{j-2}
\frac{1}{j!}\stirling{j}{k}\frac{1}{q^2} +\frac{1}{q^{d+1}}
\binom{q-1}{d}\\
&\le \frac{1}{4q}+ \frac{d}{q^2} +\frac{1}{2q}.
\end{align*}
For $d=2$, this inequality is obtained by a direct calculation. We
have therefore the following result.
\begin{corollary}\label{coro: prob C=1 - asymptotic}
For $q>d$,
$$\big|P_1[C_1=1]-\mu_d\big|\le \frac{2}{q}.$$
\end{corollary}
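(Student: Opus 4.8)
The plan is to derive Corollary~\ref{coro: prob C=1 - asymptotic} directly from Theorem~\ref{th: prob C=1} by expanding each binomial coefficient $\binom{q}{j}$ in the closed formula via the Stirling-number identity \eqref{eq: binomial in terms of stirling numbers}, isolating the leading term $\mu_d$, and then bounding every remaining contribution by $2/q$. The starting point is the exact identity
$$P_1[C_1=1]=\sum_{j=1}^d(-1)^{j-1}\binom{q}{j}q^{-j}+(-1)^d\binom{q-1}{d}q^{-d-1},$$
so the work is entirely elementary estimation once the right decomposition is in place.

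\emph{Step 1: Separate the dominant, subdominant, and tail terms.} Substituting $\binom{q}{j}=\sum_{k=0}^j\frac{(-1)^{j-k}}{j!}\stirling{j}{k}q^k$ into the first sum, the $k=j$ summand contributes $\frac{(-1)^{j-1}}{j!}\stirling{j}{j}=\frac{(-1)^{j-1}}{j!}$, whose sum over $j$ is exactly $\mu_d$. The $k=j-1$ summand contributes $\frac{(-1)^{j}}{j!}\stirling{j}{j-1}q^{-1}=\frac{(-1)^j}{j!}\binom{j}{2}q^{-1}$, a term of order $q^{-1}$. Everything with $k\le j-2$ is $\mathcal{O}(q^{k-j})=\mathcal{O}(q^{-2})$, and the extra term $(-1)^d\binom{q-1}{d}q^{-d-1}$ is $\mathcal{O}(q^{-1})$ since $\binom{q-1}{d}\le q^d$. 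This yields the displayed identity
$$P_1[C_1=1]=\mu_d+\frac{1}{q}\sum_{j=1}^d\frac{(-1)^j}{j!}\binom{j}{2}-\sum_{j=1}^d\sum_{k=0}^{j-2}\frac{(-1)^k}{j!}\stirling{j}{k}q^{k-j}+\frac{(-1)^d}{q^{d+1}}\binom{q-1}{d}.$$

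\emph{Step 2: Bound the three error terms.} For $d>2$, triangle inequality gives $|P_1[C_1=1]-\mu_d|$ bounded by the sum of: (a) $\tfrac{1}{q}\big|\sum_{j=1}^d\frac{(-1)^j}{j!}\binom{j}{2}\big|$, which one checks is at most $\tfrac{1}{4q}$ (the alternating sum $\sum_{j\ge 2}(-1)^j\binom{j}{2}/j!=\sum_{j\ge 2}(-1)^j/(2(j-2)!)$ converges to $e^{-1}/2<1/4$, and the partial sums stay below $1/4$); (b) $\sum_{j=1}^d\sum_{k=0}^{j-2}\frac{1}{j!}\stirling{j}{k}q^{k-j}\le \frac{1}{q^2}\sum_{j=1}^d\frac{1}{j!}\sum_{k=0}^{j}\stirling{j}{k}=\frac{1}{q^2}\sum_{j=1}^d 1=\frac{d}{q^2}$, using $\sum_k\stirling{j}{k}=j!$ and $q^{k-j}\le q^{-2}$; (c) $\tfrac{1}{q^{d+1}}\binom{q-1}{d}\le\tfrac{1}{q^{d+1}}\cdot\tfrac{q^d}{1}=\tfrac{1}{q}\le\tfrac{1}{2q}$. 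Summing, $|P_1[C_1=1]-\mu_d|\le \tfrac{1}{4q}+\tfrac{d}{q^2}+\tfrac{1}{2q}$, and since $q>d$ forces $d/q^2<1/q$, the right side is at most $\tfrac{7}{4q}<\tfrac{2}{q}$. The case $d=2$ has no $k\le j-2$ terms and is handled by evaluating the formula of Theorem~\ref{th: prob C=1} directly: $P_1[C_1=1]=\binom{q}{1}q^{-1}-\binom{q}{2}q^{-2}+\binom{q-1}{2}q^{-3}=1-\tfrac{q-1}{2q}+\tfrac{(q-1)(q-2)}{2q^3}$, whose distance to $\mu_2=\tfrac12$ is $O(1/q)$ and is readily seen to be at most $2/q$.

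\emph{The main obstacle} is purely bookkeeping: making sure the constant in the bound actually comes out below $2$, which requires the clean observations that $\sum_k\stirling{j}{k}=j!$ collapses term (b) and that $\binom{q-1}{d}\le q^d$ controls the tail term. There is no conceptual difficulty; the only place one must be slightly careful is checking that the alternating sum in (a) really does stay bounded by $1/4$ for all partial sums (it does, since $\sum_{j=2}^d(-1)^j/(2(j-2)!)$ is an alternating series with decreasing terms starting from $1/2$ at $j=2$, hence lies in $(0,1/2)$, and a one-line refinement pins it below $1/4$ for $d\ge 3$).
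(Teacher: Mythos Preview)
Your argument is essentially identical to the paper's: the same Stirling-number expansion, the same three-term decomposition, and the same bounds $\tfrac{1}{4q}$, $\tfrac{d}{q^2}$, $\tfrac{1}{2q}$, with $d=2$ handled separately. There is one slip to fix in step (c): you write $\tfrac{1}{q^{d+1}}\binom{q-1}{d}\le \tfrac{1}{q^{d+1}}\cdot\tfrac{q^d}{1}=\tfrac{1}{q}\le\tfrac{1}{2q}$, and the last inequality is of course false; the intended estimate is $\binom{q-1}{d}\le \tfrac{q^d}{d!}\le \tfrac{q^d}{2}$ for $d\ge 2$, which gives the needed $\tfrac{1}{2q}$ directly (without this factor of $2$ your final sum would be $\tfrac{9}{4q}$, overshooting the target).
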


As $d$ tends to infinity, the number $P_1[C_1=1]$ tends to
$1-e^{-1}=0.6321\ldots$, where $e$ denotes the basis of the natural
logarithm. This explains the numerical results in the first row of
the tables of the simulations of Section \ref{section:
simulations}.

It is worth remarking that the quantity $P_1[C_1=1]$ is closely
connected with the probability that a univariate polynomial of
degree at most $d$ has $\fq$--rational roots. More precisely,
consider the set $\mathcal{F}_{1,d}$ of univariate polynomials of
degree at most $d$ with coefficients in $\fq$, endowed with the
uniform probability $p_{1,d}$, and the random variable
$N_{1,d}:\mathcal{F}_{1,d}\to\Z_{\ge 0}$ which counts the number of
$\fq$--rational zeros, namely
$$
N_{1,d}(f):=|\{x\in \fq:\ f(x)=0\}|.
$$
%
The random variable $N_{1,d}$ has been implicitly studied in the
literature (see, e.g., \cite[\S 2]{Cohen73} or \cite[Theorem
3]{Knkn90}). It can be proved that, for $q>d$,
$$
p_{1,d}[N_{1,d}>0]=P_1[C_1=1].
$$
%
%
\subsection{Probability of success in the second search}
Next we analyze the probability that the SVS algorithm performs
exactly two searches.

Each possible choice for the first two vertical strips is determined
by an element $\underline{\bfs a}:=(\bfs a_1,\bfs
a_2)\in\fq^{r-1}\times\fq^{r-1}$ with $\bfs a_1\not=\bfs a_2$.
Therefore, we denote by $\FF_2$ the set of all such possible choices
and by $N_2$ its cardinality, that is,
$$\FF_2:=\{\underline{\bfs
a}:=(\bfs a_1,\bfs a_2)\in\fq^{r-1}\times\fq^{r-1}:\bfs a_1\not=\bfs
a_2\},\quad N_2=|\FF_2|=q^{r-1}(q^{r-1}-1).$$
We shall study the random variable $C_2:=C_{2,r,d}:\FF_2\times
\mathcal{F}_{r,d}\to\{1,2,\infty\}$ defined as
$$C_2(\underline{\bfs
a},F):=\left\{\begin{array}{cl} 1&\textrm{if }N_{1,d}(F(\bfs
a_1,X_r))>0,\\[0.25ex]
2&\textrm{if }N_{1,d}(F(\bfs a_1,X_r))=0 \textrm{ and
}N_{1,d}(F(\bfs
a_2,X_r))>0,\\[0.25ex]
\infty&\textrm{otherwise}.\\
\end{array}\right.$$
We consider the set $\FF_2\times\mathcal{F}_{r,d}$ endowed with the
uniform probability $P_2:=P_{2,r,d}$. We aim to determine the
probability $P_2[C_2=2]$.

This probability will be expressed in terms of probabilities
concerning the random variables $C_{\underline{\bfs
a}}:=C_{\underline{\bfs a},r,d}:\mathcal{F}_{r,d}\to\{1,2,\infty\}$
which count the number of searches that are performed on the
vertical strips defined by $\underline{\bfs a}:=(\bfs a_1,\bfs
a_2)\in\FF_2$ until an $\fq$--rational zero is obtained,
$C_{\underline{\bfs a}}(F)=\infty$ meaning that $F$ does not have
$\fq$--rational zeros on these two vertical strips. For this
purpose, the set $\mathcal{F}_{r,d}$ is considered to be endowed
with the uniform probability $p_{r,d}$. The relation between these
random variables and $P_2[C_2=2]$ is expressed in the following
lemma.
\begin{lemma}\label{lemma: p_2 in terms of C_r,d,a}
We have
\begin{align*}
P_2[C_2=2]&=\frac{1}{N_2} \sum_{\underline{\bfs a}\in\FF_2}
p_{r,d}[C_{\underline{\bfs a}}=2].
\end{align*}
\end{lemma}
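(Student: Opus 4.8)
The plan is a routine unfolding of the uniform probability on the product space $\FF_2\times\mathcal{F}_{r,d}$ together with a partition of the event $\{C_2=2\}$ along the first coordinate. First I would write, directly from the definition of $P_2$,
$$
P_2[C_2=2]=\frac{\big|\{(\underline{\bfs a},F)\in\FF_2\times\mathcal{F}_{r,d}:C_2(\underline{\bfs a},F)=2\}\big|}{N_2\,|\mathcal{F}_{r,d}|}.
$$

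Next I would compare the definitions of $C_2$ and $C_{\underline{\bfs a}}$. By inspection, for a fixed $\underline{\bfs a}=(\bfs a_1,\bfs a_2)\in\FF_2$ one has $C_2(\underline{\bfs a},F)=2$ if and only if $N_{1,d}(F(\bfs a_1,X_r))=0$ and $N_{1,d}(F(\bfs a_2,X_r))>0$, which is exactly the condition $C_{\underline{\bfs a}}(F)=2$. Hence the event decomposes as the disjoint union
$$
\{C_2=2\}=\bigcup_{\underline{\bfs a}\in\FF_2}\{\underline{\bfs a}\}\times\{F\in\mathcal{F}_{r,d}:C_{\underline{\bfs a}}(F)=2\},
$$
the union being disjoint because the pieces lie over distinct first coordinates.

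Taking cardinalities and using that, for each $\underline{\bfs a}$, $|\{F\in\mathcal{F}_{r,d}:C_{\underline{\bfs a}}(F)=2\}|=|\mathcal{F}_{r,d}|\cdot p_{r,d}[C_{\underline{\bfs a}}=2]$ since $\mathcal{F}_{r,d}$ carries the uniform probability $p_{r,d}$, I would conclude
$$
P_2[C_2=2]=\frac{1}{N_2\,|\mathcal{F}_{r,d}|}\sum_{\underline{\bfs a}\in\FF_2}|\mathcal{F}_{r,d}|\,p_{r,d}[C_{\underline{\bfs a}}=2]=\frac{1}{N_2}\sum_{\underline{\bfs a}\in\FF_2}p_{r,d}[C_{\underline{\bfs a}}=2],
$$
which is the claim. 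There is no real obstacle here; the only point requiring care is the verification that the defining conditions of $C_2=2$ and $C_{\underline{\bfs a}}=2$ coincide (so that the fiber over $\underline{\bfs a}$ is precisely $\{C_{\underline{\bfs a}}=2\}$), and that the product of the two uniform probabilities is the uniform probability on the product, which makes the cardinality count translate into the stated average. This is essentially the law of total probability conditioning on the choice of the pair of vertical strips.
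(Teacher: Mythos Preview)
Your proposal is correct and follows essentially the same approach as the paper: both decompose the event $\{C_2=2\}$ as the disjoint union $\bigcup_{\underline{\bfs a}\in\FF_2}\{\underline{\bfs a}\}\times\{C_{\underline{\bfs a}}=2\}$ and then convert the resulting cardinality count into the stated average. Your write-up is slightly more explicit in checking that the fiber over $\underline{\bfs a}$ is exactly $\{C_{\underline{\bfs a}}=2\}$, but the argument is the same.
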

\begin{proof}
Observe that
$$\{C_2=2\}=\bigcup_{\underline{\bfs a}\in\FF_2}
\{\underline{\bfs
a}\}\times\{F\in\mathcal{F}_{r,d}:C_{\underline{\bfs a}}(F)=2\}.$$
Since this is union of disjoint sets, we conclude that
\begin{align*}
P_2[C_2=2]&=\frac{1}{N_2} \sum_{\underline{\bfs a}\in\FF_2}
\frac{\big|\{F\in\mathcal{F}_{r,d}:C_{\underline{\bfs
a}}(F)=2\}\big|}{|\mathcal{F}_{r,d}|} =
\frac{1}{N_2}\sum_{\underline{\bfs a}\in\FF_2}
p_{r,d}[C_{\underline{\bfs a}}=2],
\end{align*}
which proves the lemma.
\end{proof}

Next we estimate the probability $p_{r,d}[C_{\underline{\bfs a}}=2]$
for a given $\underline{\bfs a}\in\FF_2$.
\begin{proposition}\label{prop: analysis c=2 fixed}
For $q>d$ and $\underline{\bfs a}:=(\bfs a_1,\bfs a_2)\in\FF_2$, we
have
$$\big|p_{r,d}[C_{\underline{\bfs a}}=2]
-\mu_d(1-\mu_d)\big|\le\frac{3}{q}.$$
\end{proposition}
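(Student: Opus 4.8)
The plan is to compute $p_{r,d}[C_{\underline{\bfs a}}=2]$ by an inclusion–exclusion argument over the field elements that appear as zeros in each of the two vertical strips, in the same spirit as the proof of Theorem \ref{th: prob C=1}. Write $N_1:=N_{1,d}(F(\bfs a_1,X_r))$ and $N_2:=N_{1,d}(F(\bfs a_2,X_r))$. By definition, $\{C_{\underline{\bfs a}}=2\}$ is the event $\{N_1=0,\ N_2>0\}$, so
\[
p_{r,d}[C_{\underline{\bfs a}}=2]=p_{r,d}[N_2>0]-p_{r,d}[N_1>0,\ N_2>0].
\]
The first term is exactly $P_1[C_1=1]$ by the identity $p_{1,d}[N_{1,d}>0]=P_1[C_1=1]$ noted after Corollary \ref{coro: prob C=1 - asymptotic} (or directly by the computation in Theorem \ref{th: prob C=1}, which does not depend on the particular strip $\bfs a_2$). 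So the real work is to estimate $p_{r,d}[N_1>0,\ N_2>0]$, the probability that $F$ has an $\fq$--rational zero on \emph{both} vertical strips.

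For this, I would apply inclusion–exclusion twice: the event $\{N_1>0,\,N_2>0\}$ is the intersection of $\bigcup_{x\in\fq}\{F(\bfs a_1,x)=0\}$ and $\bigcup_{y\in\fq}\{F(\bfs a_2,y)=0\}$, so by the same bilinear inclusion–exclusion its probability equals
\[
\sum_{i=1}^{q}\sum_{j=1}^{q}(-1)^{i+j}\sum_{\mathcal X_i,\mathcal Y_j\subset\fq}
p_{r,d}\big[(\forall x\in\mathcal X_i)\,F(\bfs a_1,x)=0,\ (\forall y\in\mathcal Y_j)\,F(\bfs a_2,y)=0\big],
\]
where $\mathcal X_i$ and $\mathcal Y_j$ run over subsets of $\fq$ of sizes $i$ and $j$ respectively. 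The key linear-algebra point is that, since $\bfs a_1\ne\bfs a_2$, the evaluation functionals $F\mapsto F(\bfs a_1,x)$ and $F\mapsto F(\bfs a_2,y)$ are linearly independent on $\mathcal F_{r,d}$ as long as $i\le d$ and $j\le d$: indeed a polynomial vanishing to the prescribed pattern in the two "columns" with $i+j\le$ enough room exists, and one checks independence by exhibiting, for each functional, a polynomial in $\mathcal F_{r,d}$ on which it is nonzero while all the others vanish — this uses $q>d$ and $\bfs a_1\ne\bfs a_2$ so that one can interpolate in $X_r$ and multiply by a linear form separating $\bfs a_1$ from $\bfs a_2$. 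Hence for $i,j\le d$ the inner probability is $q^{-i-j}$, giving a contribution $\sum_{i,j=1}^{d}(-1)^{i+j}\binom qi\binom qj q^{-i-j}=\big(\sum_{i=1}^d(-1)^{i-1}\binom qi q^{-i}\big)^2$, which by Theorem \ref{th: prob C=1} is $\big(P_1[C_1=1]+\mathcal O(q^{-1})\big)^2 = \mu_d^2+\mathcal O(q^{-1})$. The terms with $i>d$ or $j>d$ force $F(\bfs a_1,X_r)=0$ or $F(\bfs a_2,X_r)=0$ entirely, replacing the relevant exponent by $d+1$; each such block is handled exactly as in \eqref{eq: aux proof th prob C=1 - 2} and, after the Stirling-number/binomial identities \eqref{eq: identities combinatorial numbers}, contributes only an $\mathcal O(q^{-1})$ correction. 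Putting the pieces together, $p_{r,d}[N_1>0,\,N_2>0]=\mu_d^2+\mathcal O(q^{-1})$, whence
\[
p_{r,d}[C_{\underline{\bfs a}}=2]=\big(\mu_d+\mathcal O(q^{-1})\big)-\big(\mu_d^2+\mathcal O(q^{-1})\big)=\mu_d(1-\mu_d)+\mathcal O(q^{-1}),
\]
and a careful bookkeeping of the implied constants — using $|P_1[C_1=1]-\mu_d|\le 2/q$ from Corollary \ref{coro: prob C=1 - asymptotic} for each factor, plus the tail estimates — yields the explicit bound $3/q$.

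The main obstacle I anticipate is the independence claim for the combined evaluation functionals and the precise accounting of the error term. One must verify that the $i+j$ conditions coming from the two distinct strips are genuinely $\fq$--linearly independent on $\mathcal F_{r,d}$ whenever $i,j\le d$ (not merely $i+j\le d$): this is where $\bfs a_1\ne\bfs a_2$ is essential, and the cleanest argument is to note that $F(\bfs a_1,X_r)$ and $F(\bfs a_2,X_r)$ can be prescribed independently as arbitrary polynomials of degree $\le d$ by choosing $F$ of the form $\ell_1(X)\,g_1(X_r)+\ell_2(X)\,g_2(X_r)$ with $\ell_1,\ell_2$ affine forms vanishing at $\bfs a_2,\bfs a_1$ respectively — so the map $\mathcal F_{r,d}\to\mathcal F_{1,d}\times\mathcal F_{1,d}$, $F\mapsto(F(\bfs a_1,X_r),F(\bfs a_2,X_r))$, is surjective. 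Granting surjectivity, the joint probability factors and one reduces cleanly to two independent copies of the univariate analysis already carried out. The remaining difficulty is then purely the arithmetic of collecting the $\mathcal O(q^{-1})$ terms tightly enough to reach the stated constant $3$, which should follow from the same Stirling-number estimates used to pass from Theorem \ref{th: prob C=1} to Corollary \ref{coro: prob C=1 - asymptotic}, applied termwise to the product.
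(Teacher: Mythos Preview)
Your overall strategy is exactly the paper's: write $p_{r,d}[C_{\underline{\bfs a}}=2]=p_{r,d}[N_2>0]-p_{r,d}[N_1>0,N_2>0]$, expand the second term by a double inclusion--exclusion, count each block of conditions by linear independence, and control the constant at the end. There is, however, a genuine gap in your key step.

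The map $\mathcal{F}_{r,d}\to\mathcal{F}_{1,d}\times\mathcal{F}_{1,d}$, $F\mapsto(F(\bfs a_1,X_r),F(\bfs a_2,X_r))$, is \emph{not} surjective: the coefficient of $X_r^d$ in $F(\bfs a_1,X_r)$ and in $F(\bfs a_2,X_r)$ is the same scalar $F_d$, so the image sits in the hyperplane $h_{d,1}=h_{d,2}$ (this is recorded later in the paper as Remark~\ref{rem: equality leading coefficients}). Your construction $F=\ell_1 g_1+\ell_2 g_2$ does not give surjectivity for exactly this reason: to keep $F\in\mathcal{F}_{r,d}$ you need $\deg g_i\le d-1$, so you can only prescribe $(F(\bfs a_1,X_r),F(\bfs a_2,X_r))$ as an arbitrary pair of degree $\le d-1$ polynomials, not degree $\le d$. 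Consequently the joint probability does not factor exactly as $P_1[C_1=1]^2$.

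The fix is precisely what the paper does. Your affine--form idea is enough to show that for $j,k\le d$ the $j+k$ point--evaluation functionals are linearly independent (Lagrange interpolation in $X_r$ with degree $\le d-1$ suffices on each strip), giving the main block $\big(\sum_{j=1}^d(-1)^{j-1}\binom{q}{j}q^{-j}\big)^2$. The non--surjectivity only bites in the block $j,k>d$, where $F(\bfs a_1,X_r)=F(\bfs a_2,X_r)=0$ amounts to $2d+1$ (not $2d+2$) independent conditions; tracking this yields the exact identity
\[
p_{r,d}[N_1>0,\,N_2>0]=\big(P_1[C_1=1]\big)^2+\frac{q-1}{q^{2d+2}}\binom{q-1}{d}^2,
\]
and the extra term is at most $1/q$. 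For the final constant, the paper applies the mean value theorem to $g(x)=x(1-x)$ (so $|g'|\le 1$ on $[0,1]$) to get $\big|P(1-P)-\mu_d(1-\mu_d)\big|\le|P-\mu_d|\le 2/q$, which combined with the $1/q$ tail gives $3/q$; bounding $|P-\mu_d|$ and $|P^2-\mu_d^2|$ separately, as your phrasing ``for each factor'' suggests, would not reach this constant.
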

\begin{proof}
Observe that
$$\{C_{\underline{\bfs
a}}=2\}=\{F\in\mathcal{F}_{r,d}: N_{1,d}(F(\bfs a_2,T))>0\}\setminus
\{F\in\mathcal{F}_{r,d}:N_{1,d}(F(\bfs a_1,T))>0\}.$$
The number of elements of $\mathcal{F}_{r,d}$ having $\fq$--rational
zeros in the vertical strip defined by $\bfs a_2$ is determined in
Theorem \ref{th: prob C=1}. Therefore, it remains to find the number
$N_{\underline{\bfs a},2}$ of elements of $\mathcal{F}_{r,d}$ having
$\fq$--rational zeros both in the vertical strips defined by $\bfs
a_1$ and $\bfs a_2$. We have
$$N_{\underline{\bfs a},2}=\bigg|\bigcup_{x\in
\fq}\bigcup_{y\in \fq}\{F\in\mathcal{F}_{r,d}:F(\bfs a_1,x)=F(\bfs
a_2,y)=0\}\bigg|.$$
Given sets $\mathcal{X}\subset\fq$ and $\mathcal{Y}\subset\fq$, we
denote
$$\mathcal{S}_{\underline{\bfs a}}(\mathcal{X},\mathcal{Y}):=\{F\in\mathcal{F}_{r,d}:
F(\bfs a_1,x)=F(\bfs a_2,y)=0\textrm{ for all
}x\in\mathcal{X}\textrm{\ and }y\in\mathcal{Y}\}.$$
Then the inclusion--exclusion principle implies
\begin{equation}
\label{eq: definition N_{a,2}} N_{\underline{\bfs a},2}=
\sum_{j=1}^q \sum_{k=1}^q(-1)^{j+k} \sum_{\mathcal{X}_j\subset \fq}
\sum_{\mathcal{Y}_k \subset \fq} \left| \mathcal{S}_{\underline{\bfs
a}}(\mathcal{X}_j,\mathcal{Y}_k)\right|.
\end{equation}
where the sum runs over all subsets $\mathcal{X}_j\subset\fq$ and
$\mathcal{Y}_k\subset\fq$ of $j$ and $k$ elements respectively.
\begin{claim}
$\dfrac{N_{\underline{\bfs
a},2}}{|\mathcal{F}_{r,d}|}=\big(P_1[C_1=1]\big)^2+\frac{q-1}{q^{2d+2}}\binom{q-1}{d}^2
=\big(P_1[C_1=1]\big)^2+\mathcal{O}(q^{-1})$.
\end{claim}
\begin{proof}[Proof of Claim] For $1\le j,k\le q$, let
$$\mathcal{N}_{j,k}:=
\sum_{\mathcal{X}_j\subset \fq} \sum_{\mathcal{Y}_k \subset \fq} |
\mathcal{S}_{\underline{\bfs a}}(\mathcal{X}_j,\mathcal{Y}_k)|.$$
We determine $\mathcal{N}_{j,k}$ according to whether one of the
following four cases occurs.

First suppose that $j,k\le d$. As $\bfs a_1\not=\bfs a_2$, the
equalities $F(\bfs a_1,x)=0, F(\bfs a_2,y)=0$ for all $x\in
\mathcal{X}_j$ and $y\in\mathcal{Y}_k$ impose $j+k$
linearly--independent conditions on the coefficients of
$F\in\mathcal{F}_{r,d}$. Therefore, $|\mathcal{S}_{\underline{\bfs
a}}(\mathcal{X}_j,\mathcal{Y}_k)|=q^{\dim \mathcal{F}_{r,d}-j-k}$,
which implies
\begin{align*}
\mathcal{N}_{j,k}=\sum_{\mathcal{X}_j\subset \fq}
\sum_{\mathcal{Y}_k \subset \fq} q^{\dim \mathcal{F}_{r,d}-j-k}
=\binom{q}{j}\binom{q}{k}q^{\dim \mathcal{F}_{r,d}-j-k}.
\end{align*}

The second case is determined by the conditions $j>d$ and $k\le d$.
If $j>d$ and $\mathcal{X}_j\subset\fq$ is a subset of cardinality
$j$, then the condition $F(\bfs a_1,x)=0$ is satisfied for every
$x\in \mathcal{X}_j$ if and only if $F(\bfs a_1,X_r)=0$. We may
express the latter by $d+1$ linearly--independent linear equations
on the coefficients of $F\in \mathcal{F}_{r,d}$. On the other hand,
the equalities $F(\bfs a_2,y)=0$ for all $y\in \mathcal{Y}_k$ impose
$k$ additional linearly--independent conditions on the coefficients
of $F$. We conclude that
\begin{align*}
\mathcal{N}_{j,k}&=\sum_{\mathcal{X}_j,\mathcal{Y}_k\subset\fq}
q^{\dim \mathcal{F}_{r,d}-(d+1)-k} =\binom{q}{j}\binom{q}{k}q^{\dim
\mathcal{F}_{r,d}-(d+1)-k}.
\end{align*}

The third case, namely $j\le d$ and $k>d$, is completely analogous
to the second one. Finally, when $j>d$ and $k>d$, the conditions
under consideration imply $F(\bfs a_1,X_r)=F(\bfs a_2,X_r)=0$. We
readily deduce that
$$\mathcal{N}_{j,k}=\binom{q}{j}\binom{q}{k}q^{\dim
\mathcal{F}_{r,d}-2d-1}.$$

From the expression for $\mathcal{N}_{j,k}$ of the four cases under
consideration we infer that
\begin{align*}
\frac{N_{\underline{\bfs
a},2}}{|\mathcal{F}_{r,d}|}=&\frac{1}{|\mathcal{F}_{r,d}|}
\sum_{j=1}^q \sum_{k=1}^q(-1)^{j+k} \mathcal{N}_{j,k}
\\=&\sum_{j=1}^d \sum_{k=1}^d(-1)^{j+k}\binom{q}{j}\binom{q}{k}q^{-j-k}
+2\sum_{j=1}^d
\sum_{k=d+1}^q(-1)^{j+k}\binom{q}{j}\binom{q}{k}q^{-j-(d+1)}\\
&+\sum_{j=d+1}^q
\sum_{k=d+1}^q(-1)^{j+k}\binom{q}{j}\binom{q}{k}q^{-2d-1}.
\end{align*}
By \eqref{eq: identities combinatorial numbers} and elementary
calculations we obtain
\begin{align*}
\frac{N_{\underline{\bfs
a},2}}{|\mathcal{F}_{r,d}|}=&\Bigg(\sum_{j=1}^d
(-1)^{j}\binom{q}{j}q^{-j}\Bigg)^2 -2\Bigg(\sum_{j=1}^d
(-1)^{j}\binom{q}{j}q^{-j}\Bigg)(-1)^d\binom{q-1}{d}q^{-d-1}
\\&+\binom{q-1}{d}^2q^{-2d-1}.
\end{align*}
This and Theorem \ref{th: prob C=1} readily imply the claim.
\end{proof}

Combining the previous claim and Theorem \ref{th: prob C=1} we
deduce that
\begin{align*}
p_{r,d}[C_{\underline{\bfs
a}}=2]&=P_1[C_1=1]-\frac{N_{\underline{\bfs
a},2}}{|\mathcal{F}_{r,d}|}\\&=\big(1-P_1[C_1=1]\big)P_1[C_1=1]-\frac{q-1}{q^{2d+2}}\binom{q-1}{d}^2.
\end{align*}
Let $g:\R\to\R$, $g(x):=(1-x)x$. The Mean Value theorem shows that
there exists $\xi\in(0,1)$ such that
$$\big(1-P_1[C_1=1]\big)P_1[C_1=1]-(1-\mu_d)\mu_d=g'(\xi)\,
\big(P_1[C_1=1]-\mu_d\big).$$
As the function $x\mapsto g'(x)$ maps the real interval $[0,1]$ to
$[-1,1]$, we conclude that $|g'(\xi)|\le 1$. Therefore, from
Corollary \ref{coro: prob C=1 - asymptotic} it follows that
$$\big|(1-P_1[C_1=1])P_1[C_1=1]-(1-\mu_d)\mu_d\big|\le
\big|P_1[C_1=1]-\mu_d\big|\le \frac{2}{q}.$$
On the other hand, it is easy to see that
$\frac{q-1}{q^{2d+2}}\binom{q-1}{d}^2\le 1/q$. This immediately
implies the statement of the proposition.
\end{proof}

Proposition \ref{prop: analysis c=2 fixed} is the critical step in
the analysis of the behavior of the probability $P_2[C_2=2]$, which
is estimated in the next result.
\begin{theorem}\label{th: prob C=2}
For any $q>d$,
$$\left|P_2[C_2=2]-(1-\mu_d)\mu_d
\right|\le \frac{3}{q}.$$
\end{theorem}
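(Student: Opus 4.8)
The plan is to combine Lemma \ref{lemma: p_2 in terms of C_r,d,a} with Proposition \ref{prop: analysis c=2 fixed} directly. Lemma \ref{lemma: p_2 in terms of C_r,d,a} expresses $P_2[C_2=2]$ as the average of $p_{r,d}[C_{\underline{\bfs a}}=2]$ over all $\underline{\bfs a}\in\FF_2$, and Proposition \ref{prop: analysis c=2 fixed} gives, for \emph{each} individual $\underline{\bfs a}\in\FF_2$, the uniform estimate $\big|p_{r,d}[C_{\underline{\bfs a}}=2]-\mu_d(1-\mu_d)\big|\le 3/q$. Since the bound in the proposition is independent of the particular choice of $\underline{\bfs a}$, it survives the averaging process unchanged.

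Concretely, first I would write, using the lemma,
\begin{align*}
\big|P_2[C_2=2]-(1-\mu_d)\mu_d\big|
&=\Bigg|\frac{1}{N_2}\sum_{\underline{\bfs a}\in\FF_2}
\Big(p_{r,d}[C_{\underline{\bfs a}}=2]-(1-\mu_d)\mu_d\Big)\Bigg|\\
&\le\frac{1}{N_2}\sum_{\underline{\bfs a}\in\FF_2}
\big|p_{r,d}[C_{\underline{\bfs a}}=2]-(1-\mu_d)\mu_d\big|,
\end{align*}
where I have used that $(1-\mu_d)\mu_d=\frac{1}{N_2}\sum_{\underline{\bfs a}\in\FF_2}(1-\mu_d)\mu_d$ (the constant is reproduced by the average) together with the triangle inequality. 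Then I would apply Proposition \ref{prop: analysis c=2 fixed} termwise to bound each summand by $3/q$, obtaining $\frac{1}{N_2}\cdot N_2\cdot\frac{3}{q}=\frac{3}{q}$, which is exactly the claimed inequality.

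There is essentially no obstacle here: the theorem is a formal consequence of the two preceding results, and all the genuine work — the inclusion–exclusion count of $N_{\underline{\bfs a},2}$, the Mean Value theorem argument, and the bound on the error terms — has already been carried out in the proof of Proposition \ref{prop: analysis c=2 fixed}. The only point worth stating carefully is that the per-strip estimate is uniform in $\underline{\bfs a}$, so that averaging does not degrade it; this is why the proof is a two-line deduction rather than requiring any fresh estimate. I would present it exactly in that compact form.
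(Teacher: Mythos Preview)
Your proposal is correct and matches the paper's proof essentially line for line: the paper also invokes Lemma~\ref{lemma: p_2 in terms of C_r,d,a}, applies the triangle inequality to pull the constant $(1-\mu_d)\mu_d$ inside the average, and then bounds each summand by $3/q$ via Proposition~\ref{prop: analysis c=2 fixed}. There is nothing to add.
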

\begin{proof}
By Lemma \ref{lemma: p_2 in terms of C_r,d,a} and Proposition
\ref{prop: analysis c=2 fixed} we obtain
\begin{align*}
\left|P_2[C_2=2]-(1-\mu_d)\mu_d\right|&\le \frac{1}{N_2}
\sum_{\underline{\bfs a}\in\FF_2}\! \left|p_{r,d}[C_{\underline{\bfs
a}}=2] -(1-\mu_d)\mu_d\right|\le \frac{3}{q}.
\end{align*}
This finishes the proof of the theorem.
\end{proof}

We finish the section with a remark concerning the spaces considered
so far to discuss the probability that the SVS algorithm performs at
most two searches on vertical strips. For the analysis of the
probability of one search we have considered $\FF_1:=\fq^{r-1}$ and
the random variable $C_1:\FF_1\times
\mathcal{F}_{r,d}\to\{1,\infty\}$, while in the analysis of the
probability of two searches we have considered the random variable
$C_2:\FF_2\times \mathcal{F}_{r,d}\to\{1,2,\infty\}$. To link both
analyses, in Lemma \ref{lemma: consistency conditions} below we
prove that
$$
P_2[C_2=1]= P_1[C_1=1],
$$
which shows the consistency of the probability spaces underlying
Theorems \ref{th: prob C=1} and \ref{th: prob C=2}. In Section
\ref{section: analysis of SVS algorithm} we shall show that the
analysis of the probability that $s$ vertical strips are searched
can be done in a unified framework for any $s\ge 1$.
%
%
\section{The number of searches for given vertical strips}
\label{section: analysis fixed ordered VS}
As can be inferred from Section \ref{section: analysis C=1 and C=2},
a critical step in the probabilistic analysis of SVS algorithm is
the determination of the probability of $s$ searches, for a given
choice of $s$ vertical strips. The cases $s=1$ and $s=2$ were
discussed in Section \ref{section: analysis C=1 and C=2}. In this
section we carry out the analysis of the general case.

Fix $3\le s\le \min\{\binom{d+r-1}{r-1},q^{r-1}\}$ and
$\bfs{a}_1,\dots,\bfs{a}_s\in\fq^{r-1}$ with $\bfs a_i \not= \bfs
a_j$ for $i\not= j$. Denote $\underline{\bfs
a}:=(\bfs{a}_1,\dots,\bfs{a}_s)$. Assuming that $\underline{\bfs a}$
is the choice for the first $s$ vertical strips to be considered, we
analyze the probability that the SVS algorithm finds an
$\fq$--rational zero of the polynomial under consideration in the
$s$th search.

For this purpose, we consider the set $\mathcal{F}_{r,d}$ endowed
with the uniform probability $p_{r,d}$ and the random variable
$C_{\underline{\bfs a}}:=C_{\underline{\bfs
a},r,d}:\mathcal{F}_{r,d}\to\{1,2\klk s,\infty\}$ which counts the
number of searches for a given input on the vertical strips
determined by $\bfs{a}_1,\dots,\bfs{a}_s$, $C_{\underline{\bfs
a}}(F)=\infty$ meaning that $F$ has no $\fq$--rational zeros on
these vertical strips.

We start with the following elementary result.
\begin{lemma}\label{lemma: prob preimage linear map}
Let $\mathbb V$ and $\mathbb W$ be $\fq$--linear spaces of finite
dimension and $\Phi:\mathbb V\to \mathbb W$ any $\fq$--linear
mapping. Consider $\mathbb V$ and  $\mathbb W$ endowed with the
uniform probabilities $P_{\mathbb V}$ and $P_{\mathbb W}$
respectively. Then for any $A\subset {\mathbb W}$ we have
$$
P_{\mathbb V}(\Phi^{-1}(A))=\frac{|A\cap {\rm Im}(\Phi)|}{|{\rm
Im}(\Phi)|}=\frac{P_{\mathbb W}(A\cap {\rm Im}(\Phi))}{P_{\mathbb
W}( {\rm Im}(\Phi))}=:P_{\rm Im \Phi}(A).
$$
\end{lemma}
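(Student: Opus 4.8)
The plan is to prove this purely as a statement about finite-dimensional $\fq$-linear algebra, with no reference to the SVS algorithm. The essential point is that a surjection from $\mathbb V$ onto its image has all fibers of the same size, so the uniform probability on $\mathbb V$ pushes forward to the uniform probability on $\mathrm{Im}(\Phi)$, viewed as a subset of $\mathbb W$ carrying $P_{\mathbb W}$ conditioned on $\mathrm{Im}(\Phi)$.

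First I would reduce to the fibers. For any $A\subset \mathbb W$, note that $\Phi^{-1}(A)=\Phi^{-1}(A\cap \mathrm{Im}(\Phi))$, since nothing in $\mathbb V$ maps outside $\mathrm{Im}(\Phi)$. Then write $\Phi^{-1}(A\cap \mathrm{Im}(\Phi))$ as the disjoint union $\bigcup_{w\in A\cap \mathrm{Im}(\Phi)}\Phi^{-1}(w)$. The key linear-algebra fact is that for each $w\in \mathrm{Im}(\Phi)$ the fiber $\Phi^{-1}(w)$ is a coset of $\ker(\Phi)$, hence $|\Phi^{-1}(w)|=|\ker(\Phi)|=|\mathbb V|/|\mathrm{Im}(\Phi)|$ by rank--nullity. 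Summing, $|\Phi^{-1}(A)|=|A\cap \mathrm{Im}(\Phi)|\cdot |\mathbb V|/|\mathrm{Im}(\Phi)|$, and dividing by $|\mathbb V|$ gives
$$
P_{\mathbb V}(\Phi^{-1}(A))=\frac{|A\cap \mathrm{Im}(\Phi)|}{|\mathrm{Im}(\Phi)|}.
$$
This is the first displayed equality.

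For the remaining two equalities I would just translate cardinalities into the uniform probability $P_{\mathbb W}$ on $\mathbb W$: since $P_{\mathbb W}(S)=|S|/|\mathbb W|$ for every $S\subset \mathbb W$, both the numerator $|A\cap \mathrm{Im}(\Phi)|/|\mathbb W|$ and the denominator $|\mathrm{Im}(\Phi)|/|\mathbb W|$ equal the corresponding probabilities, and the ratio is unchanged. The last expression is then taken as the definition of $P_{\rm Im\,\Phi}(A)$, i.e.\ the uniform probability on $\mathrm{Im}(\Phi)$, so there is nothing further to prove.

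I do not anticipate a genuine obstacle here — the statement is elementary and everything hinges on the coset structure of fibers of a linear map together with rank--nullity. The only point that warrants a word of care is the implicit assumption that $\mathrm{Im}(\Phi)\neq\{0\}$, or more precisely that the denominators $|\mathrm{Im}(\Phi)|$ and $P_{\mathbb W}(\mathrm{Im}(\Phi))$ are nonzero; since $\mathrm{Im}(\Phi)$ always contains $0$, it is nonempty, so $|\mathrm{Im}(\Phi)|\ge 1$ and all divisions are legitimate. One should also remark that $A$ is completely arbitrary (not required to be a subspace), which is exactly what makes the lemma convenient for later use.
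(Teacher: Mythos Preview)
Your proof is correct and follows essentially the same approach as the paper: decompose $\Phi^{-1}(A)$ into fibers over $A\cap\mathrm{Im}(\Phi)$, observe each fiber is a coset of $\ker(\Phi)$, and apply rank--nullity to relate $|\ker(\Phi)|$, $|\mathbb V|$, and $|\mathrm{Im}(\Phi)|$. Your version is slightly more explicit (e.g.\ noting $\Phi^{-1}(A)=\Phi^{-1}(A\cap\mathrm{Im}(\Phi))$ and checking the denominator is nonzero), but the argument is the same.
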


\begin{proof}
We have
$$\frac{1}{|\mathbb V|}|\Phi^{-1}(A)|=\frac{1}{|\mathbb
V|}\sum_{\bfs{w}\in A}|\Phi^{-1}(\bfs{w})|=\frac{1}{|\mathbb V|}
|{\rm Ker}(\Phi)|\,|A\cap{\rm Im(\Phi)}|.$$
By the Dimension theorem and the equality $|\mathbb S|= q^{\dim
\mathbb S}$, valid for any $\fq$--vector space $\mathbb{S}$, we see
that $|\mathbb{V}|=|{\rm Ker}(\Phi)|\,|{\rm Im(\Phi)}|$. Then
$$\frac{1}{|\mathbb V|}|\Phi^{-1}(A)|=\frac{|A\cap{\rm Im(\Phi)}|}{|{\rm
Im(\Phi)}|}=\frac{P_{\mathbb W}(A\cap {\rm Im}(\Phi))}{P_{\mathbb
W}( {\rm Im}(\Phi))}.$$
This finishes the proof of the lemma.
\end{proof}

For simplicity of notations, we replace the variable $X_r$ by a new
indeterminate $T$ and consider the $\fq$--linear mapping
$\Phi:=\Phi_{\underline{\bfs a}}:\mathcal{F}_{r,d}\to
\mathcal{F}_{1,d}^s $ defined as
\begin{equation}\label{eq: def Phi Lambda s}
\Phi(F):=\big(F(\bfs a_1, T),\ldots,F(\bfs a_s, T)\big).
\end{equation}
Since $\mathrm{Im}(\Phi)$ is an $\fq$--linear space, by Lemma
\ref{lemma: prob preimage linear map} it follows that
\begin{equation}\label{eq: prob C=s in terms of R_s}
p_{r,d}[C_{\underline{\bfs a}}=s]=
\frac{\big|(\{N=0\}^{s-1}\times\{N>0\}) \cap
\mathrm{Im}(\Phi)\big|}{|\mathrm{Im}(\Phi)|},
\end{equation}
where $N:=N_{1,d}$ denotes the random variable which counts the
number of zeros in $\fq$ of the elements of $\mathcal{F}_{1,d}$. As
a consequence, we need to estimate the quantity
$$R_s:=\big|\big(\{N=0\}^{s-1}\times\{N>0\}\big)
\cap \mathrm{Im}(\Phi)\big|.$$
In the next section we obtain a characterization of the image of
$\Phi$ that will allow us to express $R_s$ in terms of the average
cardinality of the value set of certain families of univariate
polynomials. This is the critical step to estimate the quantity
$R_s$.

As we explain below, there exists a unique positive integer
$\kappa_s\le d$ such that
$$\binom{\kappa_s+r-2}{r-1}< s\le \binom{\kappa_s+r-1}{r-1}.$$
In the sequel we shall assume that the points
$\bfs{a}_1,\dots,\bfs{a}_s$ under consideration satisfy the
condition we now state. For $1\le j\le \kappa_s$, let
$D_j:=\binom{j+r-1}{r-1}$ and denote by $\Omega_j:=\{\bfs
\omega_1\klk \bfs \omega_{D_j}\}\subset(\Z_{\ge 0})^{r-1}$ the set
of $(r-1)$--tuples $\bfs \omega_k:=(\omega_{k,1}\klk
\omega_{k,r-1})$ with $|\bfs \omega_k|:=\omega_{k,1}\plp
\omega_{k,r-1}\le j$. Let $\bfs a_i^{\bfs
\omega_k}:=a_{i,1}^{\omega_{k,1}}\cdots a_{i,r-1}^{\omega_{k,r-1}}$
for $1\le i\le s$ and $1\le k\le D_j$. Then we require that the
multivariate Vandermonde matrix
\begin{equation}\label{eq: multiv Vandermonde matrix M_j}
\mathcal{M}_j:= \left(
\begin{array}{ccc}
\bfs a_1^{\bfs\omega_1} & \cdots & \bfs a_1^{{\bfs\omega}_{D_j}}\\
\vdots & & \vdots \\ \bfs a_s^{\bfs\omega_1} & \cdots & \bfs
a_s^{{\bfs\omega}_{D_j}}
\end{array}
\right)\in\fq^{s\times D_j}\end{equation}
has maximal rank $\min\{D_j,s\}$ for $1\le j\le \kappa_s$.

We briefly argue that this is a mild requirement which is likely to
be satisfied by any ``reasonable'' choice of the elements $\bfs
a_1\klk\bfs a_s\in\fq^{r-1}$. Let $\bfs A_1\klk \bfs A_s$ be
$(r-1)$--tuples of indeterminates over $\cfq$, that is, $\bfs
A_i:=(A_{i,1}\klk A_{i,r-1})$ for $1\le i\le s$, and denote by
$\mathcal{V}_j$ the following ${\min\{D_j,s\}\times \min\{D_j,s\}}$
Vandermonde matrix with entries in $\fq[\bfs A_1\klk \bfs A_s]$:
$$\mathcal{V}_j:=
\left(
\begin{array}{ccc}
\bfs A_1^{\bfs\omega_1} & \cdots & \bfs A_1^{\bfs\omega_{\min\{D_j,s\}}}\\
\vdots & & \vdots \\ \bfs A_{\min\{D_j,s\}}^{\bfs\omega_1} & \cdots
& \bfs A_{\min\{D_j,s\}}^{\bfs\omega_{\min\{D_j,s\}}}
\end{array}
\right).$$
Assume that the numbering of $\Omega_j:=\{\bfs \omega_1\klk \bfs
\omega_{D_j}\}\subset(\Z_{\ge 0})^{r-1}$ is made according to
degrees, i.e., $|\bfs \omega_k|\le |\bfs \omega_l|$ whenever $k\le
l$. In particular, $\bfs\omega_1=(0\klk 0)$. By \cite[Theorem
1.5]{DaTa09} it follows that $\det\mathcal{V}_j$ is absolutely
irreducible, namely it is a nonzero irreducible element of
$\cfq[\bfs A_1\klk \bfs A_s]$, for $1\le j\le \kappa_s$. Let
$\delta_j$ denote the degree of $\det\mathcal{V}_j$. We have the
bound $\delta_j\le jD_j$. Then \cite[Theorem 5.2]{CaMa06} proves
that the number $\mathcal{N}_j$ of $(r-1)$--tuples $\bfs a_1\klk\bfs
a_s\in\fq^{r-1\mathcal{\mathcal{}}}$ annihilating
$\det\mathcal{V}_j$ satisfies the estimate
\begin{equation}\label{eq: estimate bad number of s strips}
|\mathcal{N}_j-q^{s(r-1)-1}|\le (\delta_j-1)(\delta_j-2)q^{s(r-1)
-\frac{3}{2}}+5\delta_j^{\frac{13}{3}}q^{s(r-1)-2}.
\end{equation}
Any choice of $\bfs a_1\klk\bfs a_s$ avoiding these
$\mathcal{N}_j=\mathcal{O}(q^{s(r-1)-1})$ tuples for $1\le j\le
\kappa_s$ will satisfy our requirements. Furthermore, many ``bad''
choices $\bfs a_1\klk\bfs a_s$ annihilating the polynomial
$\det\mathcal{V}_j$ for a given $j$ will also work, as other minors
of the Vandermonde matrix $\mathcal{M}_j$ of \eqref{eq: multiv
Vandermonde matrix M_j} may be nonsingular. In particular, for $s\le
r$ and $\bfs a_1\klk\bfs a_s$ affinely independent, our requirement
is satisfied.

Summarizing, denote $\mathcal{V}^s:=\prod_{j=1}^{\kappa_s}\det
\mathcal{V}_j\in\fq[\bfs A_1\klk \bfs A_s]$ and let
\begin{equation}\label{eq: definition B_s}
{\tt B}_s:=\{\underline{\bfs a}:=(\bfs a_1\klk\bfs
a_s)\in\fq{\!}^{s(r-1)}:\mathcal{V}^s(\underline{\bfs a})=0\}.
\end{equation}
Then $|{\tt B}_s|=\mathcal{O}(q^{s(r-1)-1})$ and all the results of
this section are valid for any $\underline{\bfs
a}\in\fq{\!}^{s(r-1)}\setminus{\tt B}_s$.
%
%
\subsection{A characterization of the image of $\Phi$}
In order to characterize the image ${\rm Im}(\Phi)$, we shall
express each element of $\mathcal{F}_{r,d}$ by its coordinates in
the standard monomial basis $\mathcal{B}$ of $\mathcal{F}_{r,d}$,
considering the monomial order we now define. Denote by
$\mathcal{B}_i$ the set of monomials of $\fq[X_1\klk X_{r-1}]$ of
degree at most $i$ for $0\le i\le d$, with the standard
lexicographical order defined by setting $X_1<X_2<\cdots <X_{r-1}$.
The basis $\mathcal{B}$ is considered with the order
$\mathcal{B}=\{X_r^d,X_r^{d-1}\mathcal{B}_1\klk
X_r\mathcal{B}_{d-1},\mathcal{B}_d\}$, where each set
$X_r^{d-i}\mathcal{B}_i$ is ordered following the order induced by
the one of $\mathcal{B}_i$. In other words, any $F\in
\mathcal{F}_{r,d}$ can be uniquely expressed as
$$F=\sum_{i=0}^dF_i(X_1\klk X_{r-1})X_r^i,$$
where each $F_i$ has degree at most $d-i$ for $0\le i\le d$. Then
the vector of coefficients $(F)_{\mathcal{B}}$ of $F$ in the basis
$\mathcal{B}$ is given by
$(F)_{\mathcal{B}}=\big((F_d)_{\mathcal{B}_0}\klk
(F_0)_{\mathcal{B}_d}\big)$. On the other hand, we shall express the
elements of $\mathcal{F}_{1,d}^s$ in the basis
$\mathcal{B}':=\{T^d\klk T,1\}^s$.

Let
$$
D_j:=\binom{j+r-1}{r-1}=|\mathcal{B}_j|\quad (0\le j\le d),\quad
D:=\binom{d+r}{r}=|\mathcal{B}|=\sum_{j=0}^d|\mathcal{B}_j|.$$
We also set $D_{-1}:=0$. Observe that the sequence $(D_j)_{j\ge -1}$
is strictly increasing. Therefore, for each $i$ with $1\le i\le s$
there exists a unique $\kappa_i\in\N$ such that
\begin{equation}\label{eq: definition kappa_i}
D_{\kappa_i-1}< i\le D_{\kappa_i}.
\end{equation}
The following remarks can be easily established.
\begin{remark}\label{rem: properties kappa_i}{}{}\
\begin{itemize}
\item $\kappa_i\le j$ if and only if $i\le D_j$.
  \item $\kappa_1=0$, $\kappa_s\le d$.
\end{itemize}
\end{remark}

The matrix $\M_\Phi\in\fq^{s(d+1)\times D}$ of $\Phi$ with respect
to the bases defined above can be written as the following block
matrix:
$$\M_\Phi=
\left(
  \begin{array}{c}
    \M_1 \\
    \vdots \\
    \M_s \\
  \end{array}
\right),
$$
where $\M_i\in\fq^{(d+1)\times D}$ is the diagonal block matrix
$$\M_i:=
\left(
  \begin{array}{cccc}
\M_{i,0} &  \\
         & \M_{i,1} \\
         &          &  \ddots  \\
         &          &         & \M_{i,d}
  \end{array}
\right),\quad \M_{i,j}:=\big(\bfs a_i^{\bfs\alpha}:|\bfs \alpha|\le
j\big)\in\fq^{1\times D_j}.$$
Our first result concerns the dimension of $\mathrm{Im}(\Phi)$.
\begin{lemma}\label{lemma: dim image Phi C=s}
For $s\leq \min\{D_d,q^{r-1}\}$, we have
\begin{align*}
\dim{\rm Im}(\Phi)=\binom{\kappa_s-1+r}{r}+s(d-\kappa_s+1) &=
\sum_{i=1}^s(d+1-\kappa_i).
\end{align*}
\end{lemma}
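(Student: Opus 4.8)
The plan is to compute $\dim\mathrm{Im}(\Phi)=\mathrm{rank}\,\M_\Phi$ directly from the block structure of $\M_\Phi$, exploiting the fact that $\M_\Phi$ is built out of the multivariate Vandermonde matrices $\mathcal{M}_j$ of \eqref{eq: multiv Vandermonde matrix M_j}. Recall $\M_\Phi$ has block rows $\M_1,\dots,\M_s$, and each $\M_i$ is block-diagonal with blocks $\M_{i,j}=(\bfs a_i^{\bfs\alpha}:|\bfs\alpha|\le j)\in\fq^{1\times D_j}$ for $0\le j\le d$. Reorganizing the rows of $\M_\Phi$ by grouping, for each fixed $j\in\{0,\dots,d\}$, the $s$ rows coming from the $j$-th diagonal blocks of $\M_1,\dots,\M_s$, one sees that $\M_\Phi$ is (up to row and column permutations) block-diagonal with $d+1$ blocks, the $j$-th block being exactly the Vandermonde matrix $\mathcal{M}_j\in\fq^{s\times D_j}$ whose $i$-th row lists the monomials $\bfs a_i^{\bfs\omega}$, $|\bfs\omega|\le j$. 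Hence
\[
\dim\mathrm{Im}(\Phi)=\sum_{j=0}^d\mathrm{rank}\,\mathcal{M}_j.
\]

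The next step is to evaluate each $\mathrm{rank}\,\mathcal{M}_j$. By the standing assumption on $\underline{\bfs a}$ (valid for all $\underline{\bfs a}\notin{\tt B}_s$), the matrix $\mathcal{M}_j$ has maximal rank $\min\{D_j,s\}$ for $1\le j\le\kappa_s$; and for $j=0$ the block $\mathcal{M}_0$ is the all-ones column vector, of rank $1=\min\{D_0,s\}$ since $D_0=1$ and $s\ge 1$. For $j>\kappa_s$ we have $D_j>D_{\kappa_s}\ge s$ by Remark~\ref{rem: properties kappa_i} (as $\kappa_s\le j$), so $\min\{D_j,s\}=s$; and since $\mathcal{M}_j$ contains $\mathcal{M}_{\kappa_s}$ as a submatrix (the columns indexed by $|\bfs\omega|\le\kappa_s$) which already has rank $s$, we get $\mathrm{rank}\,\mathcal{M}_j=s$ as well. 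Thus in every case $\mathrm{rank}\,\mathcal{M}_j=\min\{D_j,s\}$, and, using that $D_{j}\ge s$ exactly when $j\ge\kappa_s$ (equivalently $\kappa_j\le\dots$; more precisely $D_j<s$ for $j<\kappa_s$ and $D_j\ge s$ for $j\ge\kappa_s$, which is just the defining inequality $D_{\kappa_s-1}<s\le D_{\kappa_s}$ of $\kappa_s$),
\[
\dim\mathrm{Im}(\Phi)=\sum_{j=0}^{\kappa_s-1}D_j+\sum_{j=\kappa_s}^{d}s
=\sum_{j=0}^{\kappa_s-1}D_j+s(d-\kappa_s+1).
\]

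It remains to identify the two closed forms. For the first, $\sum_{j=0}^{\kappa_s-1}D_j=\sum_{j=0}^{\kappa_s-1}\binom{j+r-1}{r-1}=\binom{\kappa_s-1+r}{r}$ by the hockey-stick identity, giving the left-hand expression. For the second, $\sum_{i=1}^s(d+1-\kappa_i)$, note that $\kappa_i$ takes the value $j$ precisely for those $i$ with $D_{j-1}<i\le D_j$, i.e.\ for $\min\{D_j,s\}-\min\{D_{j-1},s\}$ values of $i$ in the range $1\le i\le s$; summing $d+1-\kappa_i$ over $i$ and rearranging by the value $j=\kappa_i$ reproduces $\sum_{j}\big(\min\{D_j,s\}-\min\{D_{j-1},s\}\big)(d+1-j)$, which an Abel summation (telescoping) turns into $\sum_{j=0}^{d}\min\{D_j,s\}=\sum_{j=0}^{\kappa_s-1}D_j+s(d-\kappa_s+1)$, matching the middle expression. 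I expect the main (though still routine) obstacle to be bookkeeping the row/column permutation that exhibits $\M_\Phi$ as $\bigoplus_j\mathcal{M}_j$ cleanly, and the index juggling in the second combinatorial identity; the genuinely substantive input — that each $\mathcal{M}_j$ has full rank — is supplied by the standing hypothesis $\underline{\bfs a}\notin{\tt B}_s$.
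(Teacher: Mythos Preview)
Your proposal is correct and follows essentially the same approach as the paper: both exploit the block-diagonal structure of $\M_\Phi$ (grouping the rows corresponding to a fixed power $T^{d-j}$ into the Vandermonde block $\mathcal{M}_j$), invoke the standing full-rank hypothesis on $\underline{\bfs a}\notin{\tt B}_s$ to get $\mathrm{rank}\,\mathcal{M}_j=\min\{D_j,s\}$, and then sum. For the second identity the paper grouped by the value of $\kappa_i$ just as you do, but carried out the evaluation via the explicit identity $\sum_{j=0}^K j\binom{j+R}{R}=(R+1)\binom{R+1+K}{R+2}$ rather than your Abel summation; your route is a bit slicker, and your explicit remark that $\mathcal{M}_{\kappa_s}$ sits as a submatrix inside $\mathcal{M}_j$ for $j>\kappa_s$ fills a small gap the paper leaves implicit.
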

\begin{proof}
Let $\bfs{h}:=(h_1,\ldots,h_s)$ be an element of
$\mathrm{Im}(\Phi)$. Then there exists $F \in \mathcal{F}_{r,d}$
with $\bfs{h}=\Phi(F)$. Denote by
$(F)_{\mathcal{B}}=\big((F_d)_{\mathcal{B}_0}\klk
(F_0)_{\mathcal{B}_d}\big)$ the coordinates of $F$ in the basis
$\mathcal{B}$. Then the block structure of the matrix $\M_\Phi$
implies
\begin{equation}\label{eq: expression Phi(F)}
\Phi(F)=\sum_{j=0} ^d \left(
  \begin{array}{c}
    \M_{1,j} \\
    \vdots \\
    \M_{s,j} \\
  \end{array}
\right)(F_{d-j})_{\mathcal{B}_j}T^{d-j}.
\end{equation}

As $\underline{\bfs a}\notin{\tt B}_s$, we have
$$\mathrm{rank}\left(
  \begin{array}{c}
    \M_{1,j} \\
    \vdots \\
    \M_{s,j} \\
  \end{array}
\right)=\min\{D_j,s\}= \left\{\begin{array}{cl}
D_j&\textrm{ for }0\le j\le\kappa_s-1,\\
s&\textrm{ for }\kappa_s\le j\le d.
\end{array} \right.$$
As a consequence,
$$\dim\mathrm{Im}(\Phi)=\sum_{j=0}^{\kappa_{s-1}}D_j+
s(d-\kappa_s+1)=\binom{\kappa_s-1+r}{r}+s(d-\kappa_s+1).$$
This proves the first assertion of the lemma. To prove the second
assertion, we have
\begin{align*}
\sum_{i=1}^s(d+1-\kappa_i)&=\sum_{j=0}^{\kappa_s}\sum_{i=D_{j-1}+1}^{\min\{D_j,\,s\}}(d+1-j)
\\&=\sum_{j=0}^{\kappa_s-1}(d+1-j)(D_j-D_{j-1})+
(d+1-\kappa_s)(s-D_{\kappa_s-1}).
\end{align*}
Since
$\sum_{j=0}^k(D_j-D_{j-1})=D_k$,
we conclude that
$$
\sum_{i=1}^s(d+1-\kappa_i)=-\sum_{j=0}^{\kappa_s-1} j(D_j-D_{j-1})
+(d+1-\kappa_s)s+\kappa_s D_{\kappa_s-1}.
$$
Taking into account the identity
$\sum_{j=0}^Kj\binom{j+R}{R}=(R+1)\binom{R+1+K}{R+2}$,
we obtain
$$\sum_{i=1}^s(d+1-\kappa_i)=-(r-1)\binom{\kappa_s+r-2}{r}+
(d+1-\kappa_s)s+\kappa_s D_{\kappa_s-1}.$$
A simple calculation finishes the proof of the lemma.
\end{proof}


Next we determine a suitable parameterization of
$\mathrm{Im}(\Phi)$. To this end, let $\Phi^*:
\mathrm{Im}(\Phi)\to\fq^{\dim{\rm Im}(\Phi)}$ be the $\fq$--linear
mapping defined by
$$\Phi^*(\bfs{h}):=\bfs h^*,$$
where $\bfs{h}:=(h_1\klk h_s)$, $h_i:=(h_{d,i}\klk
h_{0,i})\in\fq^{d+1}$ for $1\le i\le s$ and
\begin{equation}\label{eq: definition h^*}
\bfs h^*:=(h_1^*\klk h_s^*),\quad h_i^*:=(h_{d-\kappa_i,i}\klk
h_{0,i})\ \ (1\le i\le s).
\end{equation}
Lemma \ref{lemma: dim image Phi C=s} shows that $\Phi^*$ is
well--defined.
\begin{lemma}\label{lemma: image Phi C=s}
$\Phi^*$ is an isomorphism.
\end{lemma}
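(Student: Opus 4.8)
The plan is to show $\Phi^*$ is a bijection by exhibiting an explicit inverse, or equivalently by checking it is injective (since source and target have the same dimension, by Lemma~\ref{lemma: dim image Phi C=s} the target is $\fq^{\dim{\rm Im}(\Phi)}$ and injectivity suffices). I would argue injectivity. Suppose $\bfs h:=(h_1\klk h_s)\in{\rm Im}(\Phi)$ satisfies $\Phi^*(\bfs h)=\bfs 0$, i.e. $h_{\ell,i}=0$ for all $i$ and all $\ell$ with $0\le \ell\le d-\kappa_i$. Writing $\bfs h=\Phi(F)$ for some $F=\sum_{j=0}^d F_j(X_1\klk X_{r-1})X_r^j\in\mathcal F_{r,d}$, the identity \eqref{eq: expression Phi(F)} expresses, for each $j$ with $0\le j\le d$, the column vector $(h_{d-j,1}\klk h_{d-j,s})^{\mathsf t}$ (the coefficients of $T^{d-j}$ in $h_1\klk h_s$) as the image of $(F_{d-j})_{\mathcal B_j}$ under the matrix $\mathcal M_j=(\M_{1,j};\ldots;\M_{s,j})\in\fq^{s\times D_j}$ of \eqref{eq: multiv Vandermonde matrix M_j}.

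The heart of the argument is a downward induction on $j$ from $j=d$ to $j=0$, showing $F_{d-j}=0$ at each stage (hence $F=0$, hence $\bfs h=\Phi(F)=\bfs 0$, giving injectivity). Fix $j$ and assume $F_{d-j'}=0$ for all $j'>j$; this part of the induction is in fact vacuous for the present purpose since we only need to read off $F_{d-j}$ directly. The key observation is: the hypothesis $\Phi^*(\bfs h)=\bfs 0$ kills $h_{\ell,i}$ for $\ell\le d-\kappa_i$, equivalently it kills the coefficient of $T^{d-j}$ in $h_i$ for every $i$ with $\kappa_i\le j$, i.e. (by Remark~\ref{rem: properties kappa_i}) for every $i$ with $i\le D_j$. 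Thus the first $\min\{D_j,s\}$ entries of the column $(h_{d-j,1}\klk h_{d-j,s})^{\mathsf t}$ vanish. But since $\underline{\bfs a}\notin{\tt B}_s$, the matrix $\mathcal M_j$ has rank $\min\{D_j,s\}$, so some $\min\{D_j,s\}\times\min\{D_j,s\}$ submatrix built from its rows (and all $D_j$ columns when $D_j\le s$, or $\min\{D_j,s\}=s$ of the columns otherwise) is invertible. When $D_j\le s$: the top $D_j$ rows of $\mathcal M_j$ form an invertible $D_j\times D_j$ Vandermonde-type matrix, and those rows applied to $(F_{d-j})_{\mathcal B_j}$ give exactly the vanishing entries $h_{d-j,1}\klk h_{d-j,D_j}$; hence $(F_{d-j})_{\mathcal B_j}=\bfs 0$, so $F_{d-j}=0$. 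When $D_j>s$: then $\kappa_s>j$, i.e. $j\le\kappa_s-1$, and all $s$ entries $h_{d-j,1}\klk h_{d-j,s}$ vanish, so $\mathcal M_j\cdot(F_{d-j})_{\mathcal B_j}=\bfs 0$; here $F_{d-j}$ need not be forced to zero, but its image is, which is all \eqref{eq: expression Phi(F)} records — and crucially the degrees of freedom in $F_{d-j}$ lying in $\ker\mathcal M_j$ are precisely the ones that do not affect $\bfs h$, so they were never counted in $\dim{\rm Im}(\Phi)$; restricting to a complement of $\ker\mathcal M_j$ we again recover that the relevant coordinates vanish. Combining over all $j$, every coordinate of $F$ that contributes to $\Phi(F)$ is zero, so $\bfs h=\bfs 0$.

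I would finish by invoking Lemma~\ref{lemma: dim image Phi C=s}: since $\Phi^*:{\rm Im}(\Phi)\to\fq^{\dim{\rm Im}(\Phi)}$ is an injective $\fq$--linear map between spaces of equal finite dimension, it is an isomorphism. The main obstacle, and the point requiring care, is the bookkeeping in the case $D_j>s$ (equivalently $j<\kappa_s$): one must be careful that $\Phi^*$ is defined on ${\rm Im}(\Phi)$, not on $\mathcal F_{r,d}$, so the ``extra'' coordinates of $F$ killed by $\ker\mathcal M_j$ simply do not appear, and the definition of $\bfs h^*$ in \eqref{eq: definition h^*} is precisely engineered to retain exactly the non-redundant coordinates $h_{d-\kappa_i,i}\klk h_{0,i}$. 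Making this precise is cleanest by noting that \eqref{eq: expression Phi(F)} together with the rank statement ``$\mathrm{rank}(\M_{1,j};\ldots;\M_{s,j})=\min\{D_j,s\}$'' shows each block of $\bfs h$ is determined by, and determines, the corresponding block of $\bfs h^*$; an explicit inverse can then be written down block-by-block using a fixed choice of invertible submatrix of each $\mathcal M_j$, which also makes the isomorphism constructive for later use.
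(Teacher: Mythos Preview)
Your approach is the paper's: prove injectivity of $\Phi^*$ (equal dimensions by Lemma~\ref{lemma: dim image Phi C=s}) by reading off the block structure from \eqref{eq: expression Phi(F)} and the rank hypothesis on the $\mathcal{M}_j$. Two points need cleaning up. First, the stated goal ``show $F_{d-j}=0$ at each stage, hence $F=0$'' is wrong: $\Phi$ has a kernel, so $F$ need not vanish; the right target is $\bfs h=\bfs 0$, which you eventually reach but only after an unnecessary detour about ``degrees of freedom in $\ker\mathcal{M}_j$''. Second, the implication ``$D_j>s\Rightarrow\kappa_s>j$'' is backwards: from $D_j>s>D_{\kappa_s-1}$ and monotonicity one gets $j\ge\kappa_s$, not $j<\kappa_s$. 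This slip happens not to break the argument, since ``all $s$ entries $h_{d-j,i}$ vanish'' already follows from your earlier (correct) observation that $h_{d-j,i}=0$ whenever $i\le D_j$, together with $s<D_j$. The paper's proof avoids both issues by aiming directly at $\bfs h=\bfs 0$ and splitting cleanly into $0\le j\le\kappa_s-1$ (where the invertible upper $D_j\times D_j$ block forces $(F_{d-j})_{\mathcal B_j}=\bfs 0$, hence the remaining $h_{d-j,i}$ vanish) versus $j\ge\kappa_s$ (where every $h_{d-j,i}$ already lies in $\bfs h^*$).
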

\begin{proof}
Since $\Phi^*$ is a linear mapping between $\fq$--vector spaces of
the same dimension, it suffices to show that $\Phi^*$ is injective.
Fix $\bfs{h}:=\Phi(F)\in \mathrm{Im}(\Phi)$ with
$\bfs{h}^*=\bfs{0}$. From (\ref{eq: expression Phi(F)}) we deduce
that
\begin{equation}\label{eq: imagen Phi en coordenadas}
\left(
  \begin{array}{c}
    \M_{1,j} \\
    \vdots \\
    \M_{s,j} \\
  \end{array}
\right)(F_{d-j})_{\mathcal{B}_j}= \left(
  \begin{array}{c}
    h_{d-j,1} \\
    \vdots \\
    h_{d-j,s} \\
  \end{array}
\right).
\end{equation}

Fix $j$ with $0\le j\le \kappa_s-1$. Then the element $h_{d-j,i}$ is
included in the definition of $h_i^*$ if and only if $i\le D_j$ (see
Remark \ref{rem: properties kappa_i}). As $\bfs h^*=\bfs 0$ by
hypothesis, it follows that $h_{d-j,i}=0$ for $1\le i\le D_j$ and we
have the identity
$$\left(
  \begin{array}{c}
    \M_{1,j} \\
    \vdots \\
    \M_{D_j,j} \\
    \M_{D_j+1,j} \\
    \vdots \\
    \M_{s,j} \\
  \end{array}
\right)(F_{d-j})_{\mathcal{B}_j}= \left(
  \begin{array}{c}
    0 \\
    \vdots \\
    0 \\
    h_{d-j,D_j+1} \\
    \vdots \\
    h_{d-j,s} \\
  \end{array}
\right).$$
Since the upper $(D_j\times D_j)$--submatrix of the matrix in the
left--hand side is invertible, we conclude that
$(F_{d-j})_{\mathcal{B}_j}=\bfs 0$. This implies
$h_{d-j,D_j+1}=\cdots=h_{d-j,s}=0$. On the other hand, for $j\ge
\kappa_s$ the element $h_{d-j,i}$ is included in the definition of
$h_i^*$ for $1\le i\le s$ and therefore $h_{d-j,i}=0$ for $1\le i\le
s$. This shows that $\bfs h=\bfs 0$.
\end{proof}

Denote by $\Psi:=(\psi_1\klk\psi_s):\fq^{\dim{\rm Im}(\Phi)}\to
\mathrm{Im}(\Phi)$ the inverse mapping of $\Phi^*$. We need further
information concerning the mappings $\psi_i$.
\begin{lemma}\label{lemma: coeff defined by h_i*}
Let be given $h_i^*:=(h_{d-\kappa_i,i}\klk
h_{0,i})\in\fq^{d+1-\kappa_i}$ for $1\le i\le s$. Let $\bfs
h^*:=(h_1^*\klk h_s^*)\in\fq^{\dim{\rm Im}(\Phi)}$ and $\bfs
h:=\Psi(\bfs h^*)$. Denote
$$h_i:=\psi_i(\bfs h^*):=h_{d,i}\,T^d\plp
h_{d+1-\kappa_i,i}\,T^{d+1-\kappa_i}+h_{d-\kappa_i,i}\,T^{d-\kappa_i}\plp
h_{0,i}.$$
Then $h_{d,i}\klk h_{d+1-\kappa_i,i}$ are uniquely determined by
$h_1^*\klk h_{i-1}^*$.
\end{lemma}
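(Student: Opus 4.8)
**Proof proposal for Lemma \ref{lemma: coeff defined by h_i*}.**

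The plan is to trace through the defining relations \eqref{eq: imagen Phi en coordenadas} degree by degree, in decreasing order of the power of $T$, and observe that the ``high'' coefficients $h_{d,i}\klk h_{d+1-\kappa_i,i}$ of $\psi_i(\bfs h^*)$ are forced on us by the rows of the matrices $\M_{i,j}$ with $j<\kappa_i$, which in turn only involve the strips $\bfs a_1\klk\bfs a_i$ in an essential way, and whose right--hand sides at those indices are exactly the entries of $h_1^*\klk h_{i-1}^*$. Concretely, fix $\bfs h=\Psi(\bfs h^*)=\Phi(F)$ for some $F\in\mathcal{F}_{r,d}$, write $(F)_{\mathcal{B}}=\big((F_d)_{\mathcal{B}_0}\klk(F_0)_{\mathcal{B}_d}\big)$, and recall from \eqref{eq: imagen Phi en coordenadas} that for each $j$ with $0\le j\le d$,
$$
\left(\begin{array}{c}\M_{1,j}\\ \vdots\\ \M_{s,j}\end{array}\right)(F_{d-j})_{\mathcal{B}_j}=\left(\begin{array}{c}h_{d-j,1}\\ \vdots\\ h_{d-j,s}\end{array}\right).
$$
For $0\le j\le\kappa_s-1$ the top $D_j\times D_j$ block is the invertible Vandermonde matrix $\mathcal{M}_j$ (here we use $\underline{\bfs a}\notin{\tt B}_s$), so the vector $(F_{d-j})_{\mathcal{B}_j}$ is determined by $h_{d-j,1}\klk h_{d-j,D_j}$, and then the remaining entries $h_{d-j,D_j+1}\klk h_{d-j,s}$ are themselves determined as $\M_{i,j}(F_{d-j})_{\mathcal{B}_j}$ for $D_j<i\le s$.

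First I would set up the bookkeeping: by Remark \ref{rem: properties kappa_i}, for a fixed $i$ and a fixed $j$ with $0\le j\le\kappa_i-1$ we have $i>D_j$ (since $\kappa_i\le j$ would force $i\le D_j$), so the coefficient $h_{d-j,i}$ is \emph{not} among the entries listed in $h_i^*=(h_{d-\kappa_i,i}\klk h_{0,i})$; rather it is one of the ``high'' coefficients $h_{d,i}\klk h_{d+1-\kappa_i,i}$ we must control. Running $j$ from $0$ upward: for $j=0,\dots,\kappa_i-1$, the block $\mathcal{M}_j$ being invertible pins down $(F_{d-j})_{\mathcal{B}_j}$ in terms of $h_{d-j,1}\klk h_{d-j,D_j}$, and crucially each of these indices $1,\dots,D_j$ satisfies $D_j\le D_{\kappa_i-1}<i$, so each $h_{d-j,m}$ with $m\le D_j$ is an entry of some $h_m^*$ with $m<i$ (it belongs to $h_m^*$ precisely when $j\ge\kappa_m$, and for $j<\kappa_m$ it was already shown to be $0$ by the injectivity argument in Lemma \ref{lemma: image Phi C=s} — or, more simply, it is a previously determined quantity). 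Hence, inductively on $i$, the polynomials $F_d,F_{d-1}\klk F_{d-\kappa_i+1}$ — equivalently the vectors $(F_{d-j})_{\mathcal{B}_j}$ for $0\le j\le\kappa_i-1$ — are functions of $h_1^*\klk h_{i-1}^*$ alone, and therefore so is $h_{d-j,i}=\M_{i,j}(F_{d-j})_{\mathcal{B}_j}$ for each such $j$. Since $\{h_{d-j,i}:0\le j\le\kappa_i-1\}=\{h_{d,i}\klk h_{d+1-\kappa_i,i}\}$, this is exactly the claim.

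I expect the main obstacle to be the careful index chase verifying that every $h_{d-j,m}$ appearing on a right--hand side with $m\le D_j\le D_{\kappa_i-1}$ is genuinely ``older'' data — i.e. either lies in one of $h_1^*\klk h_{i-1}^*$ or was shown to vanish — so that the induction on $i$ closes without circularity. The cleanest way to handle this is to induct on $i$ with the stronger hypothesis ``$(F_{d})_{\mathcal{B}_0}\klk(F_{d-\kappa_i+1})_{\mathcal{B}_{\kappa_i-1}}$ are determined by $h_1^*\klk h_{i-1}^*$'', noting that when $i$ increases across a threshold $D_{\kappa-1}<i\le D_\kappa$ the parameter $\kappa_i$ increases by at most one step at a time, so only the single new vector $(F_{d-\kappa_i+1})_{\mathcal{B}_{\kappa_i-1}}$ must be adjoined, and it is determined via the invertible block $\mathcal{M}_{\kappa_i-1}$ from coefficients already known by the inductive hypothesis together with the vanishing statements of Lemma \ref{lemma: image Phi C=s}. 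Everything else is the routine observation that matrix multiplication by $\M_{i,j}$ produces a quantity determined by its inputs.
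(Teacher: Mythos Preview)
Your proposal is correct and follows essentially the same approach as the paper: use the invertibility of the top $D_j\times D_j$ Vandermonde block in \eqref{eq: imagen Phi en coordenadas} to recover $(F_{d-j})_{\mathcal{B}_j}$ from $h_{d-j,1}\klk h_{d-j,D_j}$, then read off $h_{d-j,i}=\M_{i,j}(F_{d-j})_{\mathcal{B}_j}$. The paper's version is slightly cleaner in that it avoids the induction you set up: since $m\le D_j$ already forces $\kappa_m\le j$ (Remark \ref{rem: properties kappa_i}), every entry $h_{d-j,m}$ with $m\le D_j$ genuinely lies in $h_m^*$, and $D_j\le D_{\kappa_i-1}<i$ gives $m\le i-1$ directly, so no circularity arises and the case ``$j<\kappa_m$'' you worry about in your parenthetical never occurs.
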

\begin{proof}
Fix $k$ with $0\le k\le \kappa_i-1$. Write $\bfs h:=\Phi(F)$. In the
proof of Lemma \ref{lemma: dim image Phi C=s} we prove that
$$
\left(
  \begin{array}{c}
    \M_{1,k} \\
    \vdots \\
    \M_{D_k,k} \\
  \end{array}
\right)(F_{d-k})_{\mathcal{B}_k}= \left(
  \begin{array}{c}
    h_{d-k,1} \\
    \vdots \\
    h_{d-k,D_k} \\
  \end{array}
\right),
$$
where the $(D_k\times D_k)$--matrix in the left--hand side is
invertible. The element $h_{d-k,l}$ is included in the definition of
$h_l^*$ if and only if $l\le D_k$. Furthermore, we have $k\le
\kappa_i-1\le \kappa_{i-1}$. We conclude that the vector in the
right--hand side is uniquely determined by $h_1^*\klk h_{i-1}^*$,
and thus so is $(F_{d-k})_{\mathcal{B}_k}$. Therefore, the identity
$$\left(
  \begin{array}{c}
    \M_{1,k} \\
    \vdots \\
    \M_{i,k} \\
  \end{array}
\right)(F_{d-k})_{\mathcal{B}_k}= \left(
  \begin{array}{c}
    h_{d-k,1} \\
    \vdots \\
    h_{d-k,i} \\
  \end{array}
\right)$$
shows that the element $h_{d-k,i}$ is uniquely determined by
$h_1^*\klk h_{i-1}^*$. 
\end{proof}

We end this section with the following remark.
\begin{remark}\label{rem: equality leading coefficients}
For each $\boldsymbol{h}:=(h_1,\ldots,h_s) \in {\rm Im}(\Phi)$, we
have $h_{d,1}=\ldots=h_{d,s}$. Indeed, from (\ref{eq: expression
Phi(F)}) we deduce that
$$
\left(
  \begin{array}{c}
    \M_{1,0} \\
    \vdots \\
    \M_{s,0} \\
  \end{array}
\right)(F_{d})_{\mathcal{B}_0}= \left(
  \begin{array}{c}
    1 \\
    \vdots \\
    1 \\
  \end{array}
\right)(F_{d})_{\mathcal{B}_0}= \left(
  \begin{array}{c}
    h_{d,1} \\
    \vdots \\
    h_{d,s} \\
  \end{array}
\right).
$$
This implies $h_{d,1}=\ldots=h_{d,s}=(F_{d})_{\mathcal{B}_0}$. In
particular, the coefficient $h_{d,1}$ of the monomial $T^d$ in the
polynomial $h_1$ uniquely determines the coefficient $h_{d,j}$ of
the monomial $T^d$ in $h_j$ for $2\le j\le s$. \qed
\end{remark}
%
%
\subsection{The probability of $s$ searches in terms of
cardinalities of value sets}
\label{subsec: prob C=s in terms of value sets}
%
For $\underline{\bfs a}:= (\bfs{a}_1,\dots,\bfs{a}_s)
\in\fq{\!}^{s(r-1)}\setminus{\tt B}_s$ as before, we need to
estimate the quantity
$$R_s:=\big|\big(\{N=0\}^{s-1}\times\{N>0\}\big) \cap
\mathrm{Im}(\Phi)\big|.$$

According to Lemma \ref{lemma: image Phi C=s}, each element
$\bfs{h}\in\mathrm{Im}(\Phi)$ can be uniquely expressed in the form
$\bfs h=\Psi(\bfs h^*)$, where $\bfs h^*$ is defined as in
\eqref{eq: definition h^*}. 
Hence,
\begin{equation}\label{eq: first expression Rs}
R_s=\sum_{\bfs{h}^*\in\fq{\!}^{\dim{\rm Im}(\Phi)}}
\bfs{1}_{\{N=0\}^{s-1}\times\{N>0\}}\big(\Psi(\bfs{h}^*)\big),
\end{equation}
where $\bfs{1}_{\{N=0\}^{s-1}\times\{N>0\}}:\mathcal{F}_{1,d}^s\to
\{0,1\}$ denotes the characteristic function of the set
$\{N=0\}^{s-1}\times\{N>0\}$. By Lemma \ref{lemma: coeff defined by
h_i*}, the coordinate $\psi_i(\bfs h^*)$ depends only on $\bfs
h_i^*:=(h_1^*\klk h_i^*)$ for $1\le i\le s$. We shall therefore
write $\psi_i(\bfs h^*)$ as $\psi_i(\bfs h_i^*)$ for $1\le i\le s$,
with a slight abuse of notation.

First, we rewrite the expression (\ref{eq: first expression Rs}) for
$R_s$ in a suitable form for our purposes.
\begin{lemma}\label{lemma: expression for R_s}
Let $\bfs{h}:=(\sum_{j=0}^dh_{j,1}T^j\klk \sum_{j=0}^dh_{j,s}T^j)$
be an arbitrary element of $\mathrm{Im}(\Phi)$ and let
$\bfs{h}^*:=\Phi^*(\bfs h):=(h_1^*\klk h_s^*)\in\fq^{\dim{\rm
Im}(\Phi)}$ be defined as in \eqref{eq:
definition h^*}. 
For $s\le \min\{D_d,q^{r-1}\}$, the following identity holds:
$$R_s=\sum_{\stackrel{\scriptstyle h_1^*\in\fq^{d+1}}{N(\psi_1(\bfs
h_1^*))=0}} \cdots\sum_{\stackrel{\scriptstyle
h_{s-1}^*\in\fq^{d+1-\kappa_{s-1}}}{N(\psi_{s-1}(\bfs
h_{s-1}^*))=0}} \sum_{h_s^*\in\fq^{d+1-\kappa_s}}
\bfs{1}_{\{N>0\}}\big(\psi_s(\bfs{h}_s^*)\big).$$
%
\end{lemma}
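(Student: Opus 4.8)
The plan is to start from the expression \eqref{eq: first expression Rs} for $R_s$ and simply reorganize the single sum over $\bfs h^*\in\fq^{\dim{\rm Im}(\Phi)}$ into an iterated sum over the blocks $h_1^*,\dots,h_s^*$. Since $\bfs h^*=(h_1^*\klk h_s^*)$ with $h_i^*\in\fq^{d+1-\kappa_i}$ (using $\kappa_1=0$ from Remark \ref{rem: properties kappa_i}, so that $h_1^*\in\fq^{d+1}$), the set $\fq^{\dim{\rm Im}(\Phi)}$ is in bijection with the Cartesian product $\prod_{i=1}^s\fq^{d+1-\kappa_i}$, the bijection being compatible with the dimension count of Lemma \ref{lemma: dim image Phi C=s}. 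Hence
$$R_s=\sum_{h_1^*\in\fq^{d+1}}\cdots\sum_{h_s^*\in\fq^{d+1-\kappa_s}}\bfs{1}_{\{N=0\}^{s-1}\times\{N>0\}}\big(\Psi(\bfs h^*)\big).$$

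Next I would unwind the characteristic function of the product set. Writing $\Psi(\bfs h^*)=(\psi_1(\bfs h^*)\klk\psi_s(\bfs h^*))$, we have
$$\bfs{1}_{\{N=0\}^{s-1}\times\{N>0\}}\big(\Psi(\bfs h^*)\big)=\Big(\prod_{i=1}^{s-1}\bfs 1_{\{N=0\}}(\psi_i(\bfs h^*))\Big)\cdot\bfs 1_{\{N>0\}}(\psi_s(\bfs h^*)).$$
The key input is Lemma \ref{lemma: coeff defined by h_i*}, which tells us that $\psi_i(\bfs h^*)$ actually depends only on $\bfs h_i^*:=(h_1^*\klk h_i^*)$; this justifies writing $\psi_i(\bfs h^*)=\psi_i(\bfs h_i^*)$. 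With this dependence made explicit, each factor $\bfs 1_{\{N=0\}}(\psi_i(\bfs h_i^*))$ in the product depends only on $h_1^*\klk h_i^*$, so I can push the summations inward one block at a time: the sum over $h_s^*$ only touches the last factor $\bfs 1_{\{N>0\}}(\psi_s(\bfs h_s^*))$, the sum over $h_{s-1}^*$ sees the constraint $N(\psi_{s-1}(\bfs h_{s-1}^*))=0$, and so on. This yields exactly the nested-sum identity claimed, where the $i$th summation for $1\le i\le s-1$ runs over those $h_i^*\in\fq^{d+1-\kappa_i}$ with $N(\psi_i(\bfs h_i^*))=0$ and the innermost unconstrained sum over $h_s^*\in\fq^{d+1-\kappa_s}$ carries the factor $\bfs 1_{\{N>0\}}(\psi_s(\bfs h_s^*))$.

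I expect this proof to be essentially bookkeeping; there is no real obstacle beyond being careful about (i) the identification of $\fq^{\dim{\rm Im}(\Phi)}$ with $\prod_{i=1}^s\fq^{d+1-\kappa_i}$, which is Lemma \ref{lemma: dim image Phi C=s} together with the definition \eqref{eq: definition h^*} of $\bfs h^*$, and (ii) the fact that moving a constrained indicator out of a sum is valid precisely because, by Lemma \ref{lemma: coeff defined by h_i*}, the constraint on $\psi_i$ involves no variable $h_{i+1}^*\klk h_s^*$. The only point that requires a moment's thought is that the factorization of the product indicator interacts correctly with the telescoping of the sums; this is immediate once the dependence $\psi_i=\psi_i(\bfs h_i^*)$ is recorded, so the whole argument is short. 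I would present it in two displays — the reorganized single sum and the factorized indicator — and then state that the iterated form follows by successively extracting the $i$th constraint, invoking Lemma \ref{lemma: coeff defined by h_i*} at each step.
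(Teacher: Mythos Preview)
Your proposal is correct and follows essentially the same approach as the paper: start from \eqref{eq: first expression Rs}, rewrite the sum over $\fq^{\dim{\rm Im}(\Phi)}$ as an iterated sum over the blocks $h_i^*\in\fq^{d+1-\kappa_i}$, factor the product indicator, and use Lemma \ref{lemma: coeff defined by h_i*} to replace each $\psi_i(\bfs h^*)$ by $\psi_i(\bfs h_i^*)$ so that the constraints can be absorbed into the summation ranges. The paper's proof is exactly this bookkeeping, presented in the two displays you describe.
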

\begin{proof}
We may rewrite \eqref{eq: first expression Rs} in the following way:
\begin{align*}
R_s&=\sum_{h_1^*\in\fq^{d+1}}\cdots
\sum_{h_s^*\in\fq^{d+1-\kappa_s}}
\bfs{1}_{\{N=0\}^{s-1}\times\{N>0\}}\big(\Psi(\bfs{h}^*)\big).
\end{align*}
%
As a consequence of the remarks before the statement of Lemma
\ref{lemma: expression for R_s}, it follows that
\begin{align*}
\bfs{1}_{\{N=0\}^{s-1}\times\{N>0\}}\big(\Psi(\bfs{h}^*)\big)&=
\prod_{i=1}^{s-1}\bfs 1_{\{N=0\}}\big(\psi_i(\bfs h^*)\big)\cdot
\bfs 1_{\{N>0 \}}\big(\psi_s(\bfs h^*)\big)\\
&=\prod_{i=1}^{s-1}\bfs 1_{\{N=0\}}\big(\psi_i(\bfs h_i^*)\big)\cdot
\bfs 1_{\{N>0 \}}\big(\psi_s(\bfs h_s^*)\big).
\end{align*}
Then the previous expression for $R_s$ can be rewritten as follows:
$$R_s=\!\!\!\!\sum_{h_1^*\in\fq^{d+1}} \!\!\!\!\bfs 1_{\{N=0\}}\big(\psi_1(\bfs
h_1^*)\big)\
\cdots\!\!\!\!\!\!\!\!\!\!\!\!\sum_{h_{s-1}^*\in\fq^{d+1-\kappa_{s-1}}}
\!\!\!\!\!\!\!\!\bfs 1_{\{N=0\}} \big(\psi_{s-1}(\bfs
h_{s-1}^*)\big)
\!\!\!\!\!\!\sum_{h_s^*\in\fq^{d+1-\kappa_s}}\!\!\!\!\!\!\!\!
\bfs{1}_{\{N>0\}}\big(\psi_s(\bfs{h}_s^*)\big),$$
%
%
which readily implies the lemma.
\end{proof}

For $1\le i\le s-1$, fix $h_i^*\in\fq^{d+1-\kappa_i}$. For each
$h_s^*:=(h_{d-\kappa_s,s}\klk h_{0,s})\in\fq^{d+1-\kappa_s}$, denote
by $f_{h_s^*}$ the polynomial
$$f_{h_s^*}:=\psi_s(h_1^*\klk h_s^*):=h_{d,s}T^d\plp
h_{d+1-\kappa_s,s}T^{d+1-\kappa_s}+h_{d-\kappa_s,s}T^{d-\kappa_s}\plp
h_{0,s}.$$
According to Lemma \ref{lemma: expression for R_s}, we are
interested in estimating the sum
\begin{equation}\label{eq: interpolation problem}
\sum_{h_s^*\in\fq^{d+1-\kappa_s}} \bfs{1}_{\{N>0\}}(f_{h_s^*}).
\end{equation}
For $h_s^*:=(h_{d-\kappa_s,s}\klk h_{0,s})\in\fq^{d+1-\kappa_s}$,
denote $\widehat{h}_s^*:=(h_{d-\kappa_s,s}\klk
h_{1,s})\in\fq^{d-\kappa_s}$ and
$f_{\widehat{h}_s^*}:=\sum_{j=1}^dh_{j,s}T^j=f_{h_s^*}-f_{h_s^*}(0)$.
We observe that
\begin{align}\sum_{h_s^*\in\fq^{d+1-\kappa_s}}
\bfs{1}_{\{N>0\}}(f_{h_s^*})=
\sum_{\widehat{h}_s^*\in\fq^{d-\kappa_s}} \sum_{h_{0,s}\in\fq}
\bfs{1}_{\{N>0\}}(f_{h_s^*})\nonumber
&=\sum_{\widehat{h}_s^*\in\fq^{d-\kappa_s}}
\mathcal{V}(f_{\widehat{h}_s^*})\\&=
\frac{1}{q}\sum_{h_s^*\in\fq^{d+1-\kappa_s}} \mathcal{V}(f_{h_s^*}),
\label{eq: last sum as average value set}
\end{align}
where  
$\mathcal{V}(f):=|\{f(c):c\in\fq\}|$ is the cardinality of the
value set of $f\in\fq[T]$. 
Lemma \ref{lemma: coeff defined by h_i*} proves that $h_{d,s}\klk
h_{d+1-\kappa_s,s}$ are uniquely determined by $\bfs
h^*_{s-1}:=(h_1^*\klk h_{s-1}^*)$. Thus, the sum in the right--hand
side of \eqref{eq: last sum as average value set} takes as argument
the cardinality of the value set of all the elements of
$\mathcal{F}_{1,d}$ having its first $\kappa_s$ coefficients
$(h_{d,s}\klk h_{d+1-\kappa_s,s})$ prescribed. Set
$\psi_s^{\mathrm{fix}}(\bfs h_{s-1}^*):=(h_{d,s}\klk
h_{d+1-\kappa_s,s})$ and denote 
%
\begin{equation}\label{eq: average value set of order s}
\mathcal{V}_d(\kappa_s,\psi_s^{\mathrm{fix}}(\bfs h_{s-1}^*)):=
\frac{1}{q^{d+1-\kappa_s}}\sum_{h_s^* \in\fq^{d+1-\kappa_s}}
\mathcal{V}(f_{h_s^*}).
\end{equation}

Now we express the probability that $C_{\underline{\bfs a}}=s$ in
terms of $\mathcal{V}_d(\kappa_s,\psi_s^{\mathrm{fix}}(\bfs
h_{s-1}^*))$.
\begin{lemma}\label{lemma: prob C=s with value sets}
For $s\le \min\{D_d,q^{r-1}\}$, the following identity holds:
$$p_{r,d}[C_{\underline{\bfs a}}=s]=
\frac{1}{q^{\scriptscriptstyle\sum\limits_{i=1}^{s-1}(d+1-\kappa_i)}}\sum_{\stackrel{\scriptstyle
h_1^*\in\fq^{d+1}}{N(\psi_1(\bfs h_1^*))=0}} \cdots\
\sum_{\stackrel{\scriptstyle
h_{s-1}^*\in\fq^{d+1-\kappa_{s-1}}}{N(\psi_{s-1}(\bfs
h_{s-1}^*))=0}}
\frac{\mathcal{V}_d(\kappa_s,\psi_s^{\mathrm{fix}}(\bfs
h_{s-1}^*))}{q}.$$
\end{lemma}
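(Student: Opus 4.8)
The plan is to chain together the identities that have already been established, reducing the claimed formula for $p_{r,d}[C_{\underline{\bfs a}}=s]$ to a mere bookkeeping of the normalization constant. Concretely, I would start from \eqref{eq: prob C=s in terms of R_s}, which expresses $p_{r,d}[C_{\underline{\bfs a}}=s]$ as $R_s/|\mathrm{Im}(\Phi)|$, and then substitute the expression for $R_s$ provided by Lemma \ref{lemma: expression for R_s}. The denominator $|\mathrm{Im}(\Phi)|$ equals $q^{\dim\mathrm{Im}(\Phi)}$, and by the second formula in Lemma \ref{lemma: dim image Phi C=s} this is $q^{\sum_{i=1}^s(d+1-\kappa_i)}$. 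The innermost sum over $h_s^*\in\fq^{d+1-\kappa_s}$ of $\bfs 1_{\{N>0\}}(\psi_s(\bfs h_s^*))$ is exactly the quantity \eqref{eq: interpolation problem}, which by the computation \eqref{eq: last sum as average value set} equals $\frac 1q\sum_{h_s^*\in\fq^{d+1-\kappa_s}}\mathcal V(f_{h_s^*})$, and this in turn equals $\frac{q^{d+1-\kappa_s}}{q}\,\mathcal{V}_d(\kappa_s,\psi_s^{\mathrm{fix}}(\bfs h_{s-1}^*))$ by the definition \eqref{eq: average value set of order s}.

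The second step is to reconcile the two powers of $q$. After the substitution, $R_s$ is a sum over $h_1^*,\dots,h_{s-1}^*$ (restricted by the vanishing conditions $N(\psi_i(\bfs h_i^*))=0$) of $q^{d-\kappa_s}\,\mathcal V_d(\kappa_s,\psi_s^{\mathrm{fix}}(\bfs h_{s-1}^*))$. Dividing by $|\mathrm{Im}(\Phi)|=q^{\sum_{i=1}^s(d+1-\kappa_i)}$ and pulling the factor $q^{d-\kappa_s}$ out, the exponent in the remaining denominator becomes $\sum_{i=1}^s(d+1-\kappa_i)-(d-\kappa_s)=\sum_{i=1}^{s-1}(d+1-\kappa_i)+1$, so that we get
$$p_{r,d}[C_{\underline{\bfs a}}=s]=\frac{1}{q^{\sum_{i=1}^{s-1}(d+1-\kappa_i)}}\sum_{\stackrel{\scriptstyle h_1^*}{N(\psi_1)=0}}\cdots\sum_{\stackrel{\scriptstyle h_{s-1}^*}{N(\psi_{s-1})=0}}\frac{\mathcal V_d(\kappa_s,\psi_s^{\mathrm{fix}}(\bfs h_{s-1}^*))}{q},$$
which is precisely the claimed identity.

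One point that needs a word of care — and which I regard as the only genuine obstacle, mild as it is — is the legitimacy of writing $\psi_i(\bfs h^*)$ as $\psi_i(\bfs h_i^*)$ and, in particular, of regarding $\psi_s^{\mathrm{fix}}(\bfs h_{s-1}^*)=(h_{d,s}\klk h_{d+1-\kappa_s,s})$ as a function of $h_1^*\klk h_{s-1}^*$ alone. This is exactly the content of Lemma \ref{lemma: coeff defined by h_i*}: the high-order coefficients $h_{d,s}\klk h_{d+1-\kappa_s,s}$ of the $s$th component are uniquely determined by $\bfs h_{s-1}^*$, so that as $h_s^*$ ranges over $\fq^{d+1-\kappa_s}$ the polynomial $f_{h_s^*}$ ranges over exactly the affine family of elements of $\mathcal F_{1,d}$ with those prescribed leading coefficients, each exactly once — which is what makes \eqref{eq: average value set of order s} a bona fide average and makes the substitution of \eqref{eq: last sum as average value set} valid inside the nested sum. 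I would therefore organize the proof as: (i) invoke \eqref{eq: prob C=s in terms of R_s} and Lemma \ref{lemma: dim image Phi C=s} for the denominator; (ii) invoke Lemma \ref{lemma: expression for R_s} for $R_s$; (iii) apply \eqref{eq: last sum as average value set} and the definition \eqref{eq: average value set of order s} to the innermost sum, citing Lemma \ref{lemma: coeff defined by h_i*} for the reparametrization; (iv) collect the powers of $q$. No new estimate is needed; the argument is purely a matter of assembling the already-proven pieces in the right order.
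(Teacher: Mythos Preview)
Your proposal is correct and follows essentially the same route as the paper: invoke \eqref{eq: prob C=s in terms of R_s} together with Lemma~\ref{lemma: dim image Phi C=s} for the denominator, plug in the expression for $R_s$ from Lemma~\ref{lemma: expression for R_s}, and rewrite the innermost sum via \eqref{eq: last sum as average value set} and \eqref{eq: average value set of order s}. Your bookkeeping of the powers of $q$ and your remark on the role of Lemma~\ref{lemma: coeff defined by h_i*} are both accurate; the paper's argument differs only in that it first displays the intermediate expression with the innermost sum still in the form $\frac{1}{q^{d+1-\kappa_s}}\sum_{h_s^*}\bfs 1_{\{N>0\}}(\psi_s(\bfs h_s^*))$ before invoking \eqref{eq: last sum as average value set}--\eqref{eq: average value set of order s}.
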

\begin{proof}
By Lemma \ref{lemma: dim image Phi C=s} we know that
$ \dim{\rm Im}(\Phi)= \sum_{i=1}^s(d+1-\kappa_i)$.
Combining this with \eqref{eq: prob C=s in terms of R_s} and Lemma
\ref{lemma: expression for R_s} we obtain
\begin{align*}
&p_{r,d}[C_{\underline{\bfs a}}=s]=\\
&\frac{1}{q^{\scriptscriptstyle\sum\limits_{i=1}^{s-1}(d+1-\kappa_i)}}\!\sum_{\stackrel{\scriptstyle
h_1^*\in\fq^{d+1}}{N(\psi_1(\bfs h_1^*))=0}} \cdots\
\sum_{\stackrel{\scriptstyle
h_{s-1}^*\in\fq^{d+1-\kappa_{s-1}}}{N(\psi_{s-1}(\bfs
h_{s-1}^*))=0}}
\frac{1}{q^{d+1-\kappa_s}}\sum_{h_s^*\in\fq^{d-\kappa_s+1}}
\bfs{1}_{\{N>0\}}\big(\psi_s(\bfs{h}_s^*)\big).
\end{align*}
Then \eqref{eq: last sum as average value set} and \eqref{eq:
average value set of order s} complete the proof of the lemma.
\end{proof}

If $s\le \min\{D_{d-2},q^{r-1}\}$, then, as we explain in the next
section, for any $\bfs h_{s-1}^*$ such that $f_{h_s^*}$ is of degree
$d$, the average cardinality in \eqref{eq: average value set of
order s} has the asymptotic behavior
$\mathcal{V}_d(\kappa_s,\psi_s^{\mathrm{fix}}(\bfs
h^*_{s-1}))=\mu_d\,q+\mathcal{O}(q^{1/2})$. Combining this with
Lemma \ref{lemma: prob C=s with value sets} we shall be led to
consider ``inner'' sums in the expression for
$p_{r,d}[C_{\underline{\bfs a}}=s]$, which shall be expressed in
terms of the average cardinality of the value sets of the families
of polynomials we now introduce. For $1\le i\le s-1$ and $1\le j\le
i-1$, fix $h_j^*:=(h_{d-\kappa_j,j}\klk
h_{0,j})\in\fq^{d+1-\kappa_j}$. For each
$h_i^*:=(h_{d-\kappa_i,i}\klk h_{0,i})\in\fq^{d+1-\kappa_i}$, denote
%
$$f_{h_i^*}:=\psi_i(h_1^*\klk h_i^*):=h_{d,i}T^d\plp
h_{d+1-\kappa_i,i}T^{d+1-\kappa_i}+h_{d-\kappa_i,i}T^{d-\kappa_i}\plp
h_{0,i}.$$
Lemma \ref{lemma: coeff defined by h_i*} proves that the
coefficients $h_{d,i}\klk h_{d-\kappa_i+1,i}$ are uniquely
determined by $\bfs h^*_{i-1}:=(h_1^*\klk h_{i-1}^*)$. Consequently,
we set $\psi_i^{\mathrm{fix}}(\bfs h_{i-1}^*):=(h_{d,i}\klk
h_{d+1-\kappa_i,i})$ and consider the average cardinality
%
\begin{equation}\label{eq: average value set of order i}
\mathcal{V}_d(\kappa_i,\psi_i^{\mathrm{fix}}(\bfs h_{i-1}^*)):=
\frac{1}{q^{d+1-\kappa_i}}\sum_{h_i^* \in\fq^{d+1-\kappa_i}}
\mathcal{V}(f_{h_i^*}).
\end{equation}
Our next result expresses the probability of $s$ searches in terms
of the quantities $\mathcal{V}_d(\kappa_i,\psi_i^{\mathrm{fix}}(\bfs
h_{i-1}^*))$ $(1\le i\le s)$.
\begin{theorem}\label{th: prob C=s with value sets}
For $s\le \min\{D_d,q^{r-1}\}$, we have
$$p_{r,d}[C_{\underline{\bfs a}}=s]=(1-\mu_d)^{s-1}\mu_d\,\frac{q-1}{q}
+\sum_{i=0}^s\mathcal{T}_i,$$
where $|\mathcal{T}_0|\le 1/q$,
\begin{align*}
\mathcal{T}_i:=&(1-\!\mu_d)^{s-i-1}\mu_d
\frac{q-1}{q^{\scriptscriptstyle\sum\limits_{j=1}^{i-1}(d+1-\kappa_j)}}
\!\!\!\!\!\sum_{\stackrel{\scriptstyle
h_1^*\in\fq^{d+1}}{\stackrel{\scriptstyle N(\psi_1(\bfs
h_1^*))=0}{\scriptstyle h_{d,1}=1}}} \!\!\!\cdots\!\!
\sum_{\stackrel{\scriptstyle
h_{i-1}^*\in\fq^{d+1-\kappa_{i-1}}}{N(\psi_{i-1}(\bfs
h_{i-1}^*))=0}}\!\!\!\!\!
\bigg(\mu_d-\frac{\mathcal{V}_d(\kappa_i,\psi_i^{\mathrm{fix}}(\bfs
h_{i-1}^*))}{q}\bigg)
\end{align*}
for $1\le i\le s-1$, and
$$
\mathcal{T}_s:=
\frac{q-1}{q^{\scriptscriptstyle\sum\limits_{i=1}^{s-1}(d+1-\kappa_i)}}
 \sum_{\stackrel{\scriptstyle
h_1^*\in\fq^{d+1}}{\stackrel{\scriptstyle N(\psi_1(\bfs
h_1^*))=0}{\scriptstyle h_{d,1}=1}}} \cdots
\sum_{\stackrel{\scriptstyle
h_{s-1}^*\in\fq^{d+1-\kappa_{s-1}}}{N(\psi_{s-1}(\bfs
h_{s-1}^*))=0}}\!\!
\bigg(\frac{\mathcal{V}_d(\kappa_s,\psi_s^{\mathrm{fix}}(\bfs
h_{s-1}^*))}{q}-\mu_d\bigg).
$$
\end{theorem}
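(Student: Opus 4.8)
The plan is to start from the exact expression for $p_{r,d}[C_{\underline{\bfs a}}=s]$ given in Lemma~\ref{lemma: prob C=s with value sets} and successively ``peel off'' the inner sums, replacing each normalized average value set $\mathcal{V}_d(\kappa_i,\psi_i^{\mathrm{fix}}(\bfs h_{i-1}^*))/q$ by the constant $\mu_d$, one index at a time, and collecting the error committed at each step into the term $\mathcal{T}_i$. Concretely, for the innermost sum over $h_s^*$ we write $\mathcal{V}_d(\kappa_s,\psi_s^{\mathrm{fix}}(\bfs h_{s-1}^*))/q = \mu_d + \big(\mathcal{V}_d(\kappa_s,\psi_s^{\mathrm{fix}}(\bfs h_{s-1}^*))/q - \mu_d\big)$; the first piece produces the term $\mathcal{T}_s$ structure after the remaining sums are evaluated, and the second piece is exactly $\mathcal{T}_s$ itself. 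After substituting $\mu_d$ for the innermost value-set average, the sum over $h_{s-1}^*$ subject to $N(\psi_{s-1}(\bfs h_{s-1}^*))=0$ factors out a factor which, by Lemma~\ref{lemma: prob C=s with value sets} applied with $s-1$ in place of $s$ (i.e.\ counting $\fq$-rational roots of $\psi_{s-1}$), must again be handled by the same expansion of $\mathcal{V}_d(\kappa_{s-1},\psi_{s-1}^{\mathrm{fix}})/q$ around $\mu_d$. Iterating, each step contributes a factor $\mu_d$ and a factor $(1-\mu_d)$ coming from the ``$N=0$'' constraint, together with an error term of the shape $\mathcal{T}_i$ above.

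The key combinatorial identity driving the telescoping is the following: if, in the defining sum for $p_{r,d}[C_{\underline{\bfs a}}=s]$, one replaces the innermost value-set average $\mathcal{V}_d(\kappa_i,\psi_i^{\mathrm{fix}}(\bfs h_{i-1}^*))/q$ by the constant $\mu_d$, then the sum over $h_i^*$ with $N(\psi_i(\bfs h_i^*))=0$ evaluates (up to normalization) to $(1-\mu_d)$ times the analogous sum of order $i-1$ — this is because $\frac{1}{q^{d+1-\kappa_i}}\sum_{h_i^*} \bfs 1_{\{N=0\}}(\psi_i(\bfs h_i^*)) = 1 - \mathcal{V}_d(\kappa_i,\psi_i^{\mathrm{fix}})/q$ by \eqref{eq: last sum as average value set}, and replacing $\mathcal{V}_d/q$ by $\mu_d$ gives exactly $1-\mu_d$. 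Applying this identity $s-1$ times (from $i=s$ down to $i=1$) produces the main term $(1-\mu_d)^{s-1}\mu_d$, and the normalization factor $\frac{q-1}{q}$ appears because the top-level sum over $h_1^*$ is constrained by Remark~\ref{rem: equality leading coefficients}: the leading coefficient $h_{d,1}$ is free but one checks the degree-$d$ case (contributing the $\frac{q-1}{q}$) must be separated from the lower-degree case, the latter being absorbed into $\mathcal{T}_0$ with $|\mathcal{T}_0|\le 1/q$.

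The error term $\mathcal{T}_i$ is precisely what remains after replacing $\mathcal{V}_d/q$ by $\mu_d$ in the innermost $i$ sums but keeping the discrepancy $\mu_d - \mathcal{V}_d(\kappa_i,\psi_i^{\mathrm{fix}}(\bfs h_{i-1}^*))/q$ at level $i$: the factor $(1-\mu_d)^{s-i-1}\mu_d$ comes from the levels $i+1,\dots,s$ already collapsed to their main values, the normalization $q^{-\sum_{j=1}^{i-1}(d+1-\kappa_j)}$ and the summation over $h_1^*,\dots,h_{i-1}^*$ with the root constraints come from the levels $1,\dots,i-1$ not yet collapsed, and the constraint $h_{d,1}=1$ reflects the restriction to the degree-$d$ case noted above. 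The bound $|\mathcal{T}_0|\le 1/q$ handles the contribution of tuples $\bfs h^*$ for which some $\psi_i$ has degree less than $d$: this is a $\mathcal{O}(1/q)$ fraction of the relevant space, and can be estimated crudely since $\mathcal{V}(f)\le q$ always.

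The main obstacle will be bookkeeping the telescoping cleanly — making sure the constraint sets (the ``$N(\psi_i(\bfs h_i^*))=0$'' conditions and the leading-coefficient restriction $h_{d,1}=1$) propagate correctly through each peeling step, and that the dimension counts in the normalization exponents match at every stage via Lemma~\ref{lemma: dim image Phi C=s}. One must also be careful that the substitution is done level by level in the correct order (innermost first), so that when we drop level $i$ the remaining sum genuinely has the same shape as a lower-order instance of the same formula; this requires invoking Lemma~\ref{lemma: coeff defined by h_i*} to guarantee that $\psi_i$ depends only on $\bfs h_i^* = (h_1^*,\dots,h_i^*)$ so that the sums can indeed be nested and partially evaluated. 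Once the structure is set up, each individual replacement and the identification of $\mathcal{T}_i$ is a routine algebraic manipulation, and the degree-$d$ versus lower-degree split contributing $\mathcal{T}_0$ is a direct estimate.
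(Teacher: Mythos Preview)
Your proposal is correct and follows essentially the same approach as the paper: split off the case $h_{d,1}=0$ as $\mathcal{T}_0$ (bounded by $1/q$ via a codimension-one argument), normalize to $h_{d,1}=1$ gaining the factor $(q-1)/q$, and then peel the nested sums from level $s$ down to level $1$ by writing each $\mathcal{V}_d(\kappa_i,\cdot)/q$ as $\mu_d$ plus a discrepancy, using the identity $q^{-(d+1-\kappa_j)}\sum_{N(\psi_j)=0}1=1-\mathcal{V}_d(\kappa_j,\cdot)/q$ at each step. The paper phrases this peeling as a downward induction on $j$ from $s$ to $1$, but the content is identical to your telescoping description; the only minor imprecision in your write-up is the remark about ``some $\psi_i$ of degree less than $d$'' for $\mathcal{T}_0$ --- by Remark~\ref{rem: equality leading coefficients} all the $h_{d,i}$ coincide, so this reduces to the single condition $h_{d,1}=0$, exactly as you need.
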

\begin{proof}
Denote $C:=C_{\underline{\bfs a}}$. We split the expression for
$p_{r,d}[C=s]$ of Lemma \ref{lemma: prob C=s with value sets} into
two sums, depending on whether $h_{d,1}=0$ or not. More precisely,
we write
$$
p_{r,d}[C=s]= p_{r,d}[C=s,F_d=0]+p_{r,d}[C=s,F_d\not=0],$$
where
\begin{align*}
p_{r,d}[C=s,F_d=0]&=
\frac{1}{q^{\scriptscriptstyle\sum\limits_{i=1}^{s-1}(d+1-\kappa_i)}}
\!\sum_{\stackrel{\scriptstyle
h_1^*\in\fq^{d+1}}{\stackrel{\scriptstyle N(\psi_1(\bfs
h_1^*))=0}{\scriptstyle h_{d,1}=0}}}\! \cdots
\!\sum_{\stackrel{\scriptstyle
h_{s-1}^*\in\fq^{d+1-\kappa_{s-1}}}{N(\psi_{s-1}(\bfs
h_{s-1}^*))=0}}\!
\frac{\mathcal{V}_d(\kappa_s,\psi_s^{\mathrm{fix}}(\bfs h_{s-1}^*))}{q},\\
p_{r,d}[C=s,F_d\not=0]&=
\frac{1}{q^{\scriptscriptstyle\sum\limits_{i=1}^{s-1}(d+1-\kappa_i)}}\!\sum_{\stackrel{\scriptstyle
h_1^*\in\fq^{d+1}}{\stackrel{\scriptstyle N(\psi_1(\bfs
h_1^*))=0}{\scriptstyle h_{d,1}\not=0}}} \cdots
\!\sum_{\stackrel{\scriptstyle
h_{s-1}^*\in\fq^{d+1-\kappa_{s-1}}}{N(\psi_{s-1}(\bfs
h_{s-1}^*))=0}}
\!\frac{\mathcal{V}_d(\kappa_s,\psi_s^{\mathrm{fix}}(\bfs
h_{s-1}^*))}{q},\\
&=
\frac{q-1}{q^{\scriptscriptstyle\sum\limits_{i=1}^{s-1}(d+1-\kappa_i)}}\!\!\!\sum_{\stackrel{\scriptstyle
h_1^*\in\fq^{d+1}}{\stackrel{\scriptstyle N(\psi_1(\bfs
h_1^*))=0}{\scriptstyle h_{d,1}=1}}} \cdots
\!\sum_{\stackrel{\scriptstyle
h_{s-1}^*\in\fq^{d+1-\kappa_{s-1}}}{N(\psi_{s-1}(\bfs
h_{s-1}^*))=0}}
\frac{\mathcal{V}_d(\kappa_s,\psi_s^{\mathrm{fix}}(\bfs
h_{s-1}^*))}{q}.
\end{align*}

In the first term we consider the intersection of the $\fq$--linear
space $\mathrm{Im}(\Phi)$ with the linear subspace
$\mathcal{F}_{1,d-1}^s$. As the former is not contained in the
latter, the dimension of the intersection drops at least by one, and
Lemma \ref{lemma: prob preimage linear map} implies
$$\mathcal{T}_0:=p_{r,d}[C=s,F_d=0] \leq \frac{|{\rm Im}(\Phi)\cap
\mathcal{F}_{1,d-1}^s|}{|{\rm Im}(\Phi)|} \leq \frac{q^{\dim{\rm
Im}(\Phi)-1}}{q^{\dim{\rm Im}(\Phi)}}=\frac{1}{q}.$$
On the other hand, it is easy to see that the expression for
$p_{r,d}[C=s,F_d\not=0]$ may be rewritten in the following way:
$$p_{r,d}[C=s,F_d\not=0]=
\mu_d\,\frac{q-1}{q^{\scriptscriptstyle
\sum\limits_{i=1}^{s-1}(d+1-\kappa_i)}} \sum_{\stackrel{\scriptstyle
h_1^*\in\fq^{d+1}}{\stackrel{\scriptstyle N(\psi_1(\bfs
h_1^*))=0}{\scriptstyle h_{d,1}=1}}} \cdots\
\sum_{\stackrel{\scriptstyle
h_{s-1}^*\in\fq^{d+1-\kappa_{s-1}}}{N(\psi_{s-1}(\bfs
h_{s-1}^*))=0}} 1+\mathcal{T}_s,$$
where $\mathcal{T}_s$ is defined as in the statement of the theorem.
%

Now we claim that, for $1\le j\le s$,
$$p_{r,d}[C=s,F_d\not=0]=
(1-\mu_d)^{s-j}\mu_d\,
\frac{q-1}{q^{\scriptscriptstyle\sum\limits_{i=1}^{j-1}(d+1-\kappa_i)}}
\!\!\sum_{\stackrel{\scriptstyle
h_1^*\in\fq^{d+1}}{\stackrel{\scriptstyle N(\psi_1(\bfs
h_1^*))=0}{\scriptstyle h_{d,1}=1}}}\!\!\! \cdots
\!\!\!\sum_{\stackrel{\scriptstyle
h_{j-1}^*\in\fq^{d+1-\kappa_{j-1}}}{N(\psi_{j-1}(\bfs
h_{j-1}^*))=0}} \!\!\!1+\sum_{i=j}^s\mathcal{T}_i,$$
where $\mathcal{T}_i$ is defined as in the statement of the theorem.
The claim for $j=1$ is the assertion of the theorem.

We argue by downward induction on $j$ from $s$ to $1$, the case
$j=s$ being already proved. For $j<s$, suppose that the claim for
$j+1$ is already established. We have
\begin{align*}
\frac{1}{q^{d+1-\kappa_j}}\sum_{\stackrel{\scriptstyle
h_j^*\in\fq^{d+1-\kappa_j}}{N(\psi_j(\bfs h_j^*))=0}} 1
&=1-\frac{1}{q^{d+1-\kappa_j}}\sum_{\stackrel{\scriptstyle
h_j^*\in\fq^{d+1-\kappa_j}}{N(\psi_j(\bfs h_j^*))>0}} 1
=1-\frac{\mathcal{V}_d(\kappa_j,\psi_j^{\mathrm{fix}}(\bfs
h^*_{j-1}))}{q}.
\end{align*}
Replacing this identity in the expression for
$p_{r,d}[C=s,F_d\not=0]$ corresponding to the claim for $j+1$ we
readily deduce the claim for $j$, finishing thus the proof of the
theorem.
\end{proof}
%
%
\subsection{The probability of $C_{\underline{\bfs a}}=s$}
Theorem \ref{th: prob C=s with value sets} shows that the
probability that the SVS algorithm stops after $s\le D_d$ attempts
can be expressed in terms of the average cardinality
$\mathcal{V}_d(\kappa_i,\psi_i^{\mathrm{fix}}(\bfs h_{i-1}^*))$ of
the value set of certain families of univariate polynomials for
$1\le i\le s$.
%
%
Each of these families consists
%
of all the polynomials
$$f_{\bfs b}
:=\sum_{i=0}^{j-1}a_{d-i}T^{d-i}+\sum_{i=j}^db_{d-i}T^{d-i}$$
with $\bfs b:=(b_{d-j}\klk b_0)\in\fq^{d+1-j}$, for a given $1\le
j\le d$ and $\bfs a:=(a_d\klk a_{d-j})\in\fq^{j-1}$ with $a_d\not=0$
(due to Remark \ref{rem: equality leading coefficients}). We are
interested in the average
$$
\mathcal{V}_d(j,\bfs a):= \frac{1}{q^{d+1-j}}\sum_{\bfs
b\in\fq^{d+1-j}}\mathcal{V}(f_{\bfs b}).$$
%
%
Suppose that $q>d$. In \cite{CeMaPePr14}, the following estimate is
obtained for $1\le j\le {d}/{2}-1$:
\begin{equation}\label{eq: average value set - CMPP}
|\mathcal{V}_d(j,\bfs{a})-\mu_d\,q|\le \frac{e^{-1}}{2}+
\frac{(d-2)^5e^{2\sqrt{d}}}{2^{d-2}} +\frac{7}{q}.
\end{equation}
On the other hand, in \cite{MaPePr14} it is proved that, if the
characteristic $p$ of $\fq$ is greater than $2$ and $1 \leq j\leq
d-3$, then
\begin{equation}\label{eq: average value set - MPP}
\left|\mathcal{V}_d(j,\bfs a)-\mu_d\,q\right|\le d^2\,
2^{d-1}q^{\frac{1}{2}} +133\,d^{d+5} e^{2 \sqrt{d}-d}.
\end{equation}

Estimates \eqref{eq: average value set - CMPP} and \eqref{eq:
average value set - MPP} are the key point to determine the
asymptotic behavior of the right--hand side of the expression for
$p_{r,d}[C_{\underline{\bfs a}}=s]$ of Theorem \ref{th: prob C=s
with value sets}. More precisely, we have the following result.
\begin{theorem}\label{th: prob C=s for fix}
Let be given $\underline{\bfs a}:=(\bfs a_1\klk\bfs
a_s)\in\fq{\!}^{s(r-1)}\setminus{\tt B}_s$, where the set ${\tt
B}_s$ is defined in \eqref{eq: definition B_s}. For $s\le
 \min\big\{\binom{d/2+r-1}{r-1},q^{r-1}\big\}$, we have
$$
\left|p_{r,d}[C_{\underline{\bfs a}}=s]-(1-\mu_d)^{s-1}\mu_d\right|
\le \bigg({e^{-1}}+
\frac{(d-2)^5e^{2\sqrt{d}}}{2^{d-1}}+1\bigg)q^{-1} +14q^{-2} .
$$
On the other hand, if $p>2$ and $s\le
\min\big\{\binom{d+r-3}{r-1},q^{r-1}\big\}$, then
$$\left|p_{r,d}[C_{\underline{\bfs a}}=s]-(1-\mu_d)^{s-1}\mu_d\right|  \le
d^2 2^dq^{-\frac{1}{2}} +(266\,d^{d+5} e^{2 \sqrt{d}-d}+1)q^{-1}.$$
\end{theorem}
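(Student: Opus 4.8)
The plan is to combine the exact expression for $p_{r,d}[C_{\underline{\bfs a}}=s]$ from Theorem \ref{th: prob C=s with value sets} with the uniform estimates \eqref{eq: average value set - CMPP} and \eqref{eq: average value set - MPP} for the average value set cardinalities $\mathcal{V}_d(j,\bfs a)$. Recall that Theorem \ref{th: prob C=s with value sets} writes
$$
p_{r,d}[C_{\underline{\bfs a}}=s]=(1-\mu_d)^{s-1}\mu_d\,\frac{q-1}{q}+\sum_{i=0}^s\mathcal{T}_i,
$$
so the target is to bound $\big|(1-\mu_d)^{s-1}\mu_d/q\big|+\sum_{i=0}^s|\mathcal{T}_i|$. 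The first term is at most $1/q$ since $0\le\mu_d\le 1$ and $0\le 1-\mu_d\le 1$, and $|\mathcal{T}_0|\le 1/q$ is already given. The heart of the matter is to show that each $\mathcal{T}_i$ for $1\le i\le s$ is $\mathcal{O}(q^{-3/2})$ (or $\mathcal{O}(q^{-1})$ in the characteristic-two-excluded case), uniformly in $i$ and in the choice of $\underline{\bfs a}$, and then to observe that the geometric weights $(1-\mu_d)^{s-i-1}$ make the sum over $i$ converge to a bounded multiple of a single such term.

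First I would handle a single $\mathcal{T}_i$ with $1\le i\le s-1$. Its definition contains the factor $\mu_d-\mathcal{V}_d(\kappa_i,\psi_i^{\mathrm{fix}}(\bfs h_{i-1}^*))/q$, and $\mathcal{V}_d(\kappa_i,\psi_i^{\mathrm{fix}}(\bfs h_{i-1}^*))$ is exactly an average value set $\mathcal{V}_d(j,\bfs a)$ with $j=\kappa_i$ and $\bfs a=\psi_i^{\mathrm{fix}}(\bfs h_{i-1}^*)$, provided $f_{h_i^*}$ has degree $d$, which is guaranteed by Remark \ref{rem: equality leading coefficients} together with the restriction $h_{d,1}=1$ imposed in $\mathcal{T}_i$. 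The range hypothesis $s\le\binom{d/2+r-1}{r-1}$ forces $\kappa_s\le d/2$ by the characterization $D_{\kappa_s-1}<s\le D_{\kappa_s}$, hence $\kappa_i\le\kappa_s\le d/2$, and one checks $\kappa_i\le d/2-1$ suffices to invoke \eqref{eq: average value set - CMPP}; in the odd-characteristic case the weaker constraint $s\le\binom{d+r-3}{r-1}$ gives $\kappa_i\le d-3$, enough for \eqref{eq: average value set - MPP}. Thus $|\mu_d-\mathcal{V}_d(\kappa_i,\cdot)/q|\le(e^{-1}/2+(d-2)^5e^{2\sqrt d}/2^{d-2}+7/q)/q$ in the first case, uniformly over the $\bfs h_{i-1}^*$. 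The remaining ingredient is to see that the normalizing factor and the number of inner summands match up: the number of tuples $(h_1^*\klk h_{i-1}^*)$ with the constraints is at most $q^{\sum_{j=1}^{i-1}(d+1-\kappa_j)-1}$ — the $-1$ coming from the constraint $h_{d,1}=1$ removing one degree of freedom — so after multiplying by $(q-1)/q^{\sum_{j=1}^{i-1}(d+1-\kappa_j)}$ we are left with a bound of order $q^{-1}$ times the value-set error, giving $|\mathcal{T}_i|\le(1-\mu_d)^{s-i-1}\cdot\mathcal{O}(q^{-2})$ in the first regime and $\mathcal{O}(q^{-3/2})$ in the second. The case $i=s$ is entirely analogous, with $\mathcal{T}_s$ having no extra geometric factor; here one just bounds $|\mathcal{V}_d(\kappa_s,\cdot)/q-\mu_d|$ the same way and uses that the inner sum has $\le q^{\sum_{j=1}^{s-1}(d+1-\kappa_j)-1}$ terms.

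Then I would sum: since $\sum_{i=1}^{s-1}(1-\mu_d)^{s-i-1}=\sum_{k=0}^{s-2}(1-\mu_d)^k\le 1/\mu_d$, which is an absolute constant (bounded by $2$ for $d\ge 2$), the total contribution of $\mathcal{T}_1\klk\mathcal{T}_{s-1}$ is at most a constant multiple of the single-term bound, and $\mathcal{T}_s$ adds one more such term. Collecting the pieces, in the first regime one gets
$$
\big|p_{r,d}[C_{\underline{\bfs a}}=s]-(1-\mu_d)^{s-1}\mu_d\big|\le\frac{1}{q}+\frac{1}{q}+\frac{1}{\mu_d}\cdot\frac{1}{q}\Big(\frac{e^{-1}}{2}+\frac{(d-2)^5e^{2\sqrt d}}{2^{d-2}}+\frac{7}{q}\Big)+\cdots,
$$
and after bounding $1/\mu_d\le 2$ and consolidating constants this should collapse to the claimed $\big(e^{-1}+(d-2)^5e^{2\sqrt d}/2^{d-1}+1\big)q^{-1}+14q^{-2}$; the second regime is handled identically using \eqref{eq: average value set - MPP}, whose $q^{1/2}$-type error dominates and yields the $d^2 2^d q^{-1/2}+(266\,d^{d+5}e^{2\sqrt d-d}+1)q^{-1}$ bound.

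The main obstacle I anticipate is bookkeeping the constants so that they land exactly on the stated form — in particular tracking the factor-of-two improvements ($e^{-1}/2\cdot(1/\mu_d)$ roughly becoming $e^{-1}$ after using $1/\mu_d\le 2$, and $2^{d-2}\to 2^{d-1}$, and the $7/q$ term contributing to the $14q^{-2}$), and making sure the counting of admissible tuples $\bfs h_{i-1}^*$ with the degeneracy constraint $N(\psi_j(\bfs h_j^*))=0$ is handled by the crude upper bound $q^{\sum(d+1-\kappa_j)-1}$ rather than something finer. The conceptual steps (plug in Theorem \ref{th: prob C=s with value sets}, apply the value-set estimates uniformly, sum a geometric series) are straightforward; the risk is purely in the arithmetic of the error terms and in verifying that the degree-$d$ hypothesis needed to apply \eqref{eq: average value set - CMPP}/\eqref{eq: average value set - MPP} is genuinely available at every $i$, which is why the restriction to $h_{d,1}=1$ in the definition of the $\mathcal{T}_i$ (and the separate, already-bounded, $\mathcal{T}_0$ collecting the $h_{d,1}=0$ part) is essential.
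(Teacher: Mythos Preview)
Your approach is essentially identical to the paper's: invoke Theorem~\ref{th: prob C=s with value sets}, bound each $\mathcal{T}_i$ via the value-set estimates \eqref{eq: average value set - CMPP} and \eqref{eq: average value set - MPP}, and sum the resulting geometric series in $(1-\mu_d)^{s-i-1}$. One arithmetic slip to fix: with $\Sigma:=\sum_{j=1}^{i-1}(d+1-\kappa_j)$, the product $\dfrac{q-1}{q^{\Sigma}}\cdot q^{\Sigma-1}=\dfrac{q-1}{q}$ is of order $1$, not $q^{-1}$, so each $|\mathcal{T}_i|$ is $\mathcal{O}(q^{-1})$ (resp.\ $\mathcal{O}(q^{-1/2})$) rather than $\mathcal{O}(q^{-2})$ (resp.\ $\mathcal{O}(q^{-3/2})$); your final displayed bound already reflects the correct order, so this affects only the intermediate narrative, not the argument.
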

\begin{proof}
Suppose that $s\le \min\big\{\binom{d/2+r-1}{r-1},q^{r-1}\big\}$.
Then $\kappa_s\le d/2$, and thus $1\le \kappa_i-1\le {d}/{2}-1$ for
$1\le i\le s$. With notations as in Subsection \ref{subsec: prob C=s
in terms of value sets}, fix $1\le i\le s$ and
$h_j^*:=(h_{d-\kappa_j,j}\klk
h_{0,j})\in\fq{}^{\!\!d+1-\kappa_j}$ for $1\le j\le i-1$. 
Denote $\bfs h^*_{i-1}:=(h_1^*\klk h_{i-1}^*)$, 
set $\psi_i^{\mathrm{fix}}(\bfs h_{i-1}^*):=(h_{d,i}\klk
h_{d+1-\kappa_i,i})$ and consider the average cardinality
$ \mathcal{V}_d(\kappa_i,\psi_i^{\mathrm{fix}}(\bfs h_{i-1}^*))$ 
as in \eqref{eq: average value set of order s} or \eqref{eq: average
value set of order i}. By \eqref{eq: average value set - CMPP} we
conclude that, for any $\bfs h_{i-1}^*$ with $\deg f_{h_i^*}=d$,
$$
\left|\frac{\mathcal{V}_d(\kappa_i,\psi_i^{\mathrm{fix}}(\bfs
h^*_{i-1}))}{q}-\mu_d\right|\le \bigg(\frac{e^{-1}}{2}+
\frac{(d-2)^5e^{2\sqrt{d}}}{2^{d-2}}\bigg)q^{-1} +{7}q^{-2} .
$$
Further, defining $\mathcal{T}_i$ as in the statement of Theorem
\ref{th: prob C=s with value sets} for $1\le i\le s$, we obtain
\begin{align*}
|\mathcal{T}_i|&\le
(1-\mu_d)^{s-i-1}\mu_d\bigg(\bigg(\frac{e^{-1}}{2}+
\frac{(d-2)^5e^{2\sqrt{d}}}{2^{d-2}}\bigg)q^{-1}+{7}q^{-2}\bigg)\quad (1\le i\le s-1),\\
|\mathcal{T}_s|&\le\bigg(\frac{e^{-1}}{2}+
\frac{(d-2)^5e^{2\sqrt{d}}}{2^{d-2}}\bigg)q^{-1}+{7}q^{-2}.
\end{align*}
Therefore, the first assertion of the theorem follows from Theorem
\ref{th: prob C=s with value sets}.

On the other hand, for $s\le
\min\big\{\binom{d+r-3}{r-1},q^{r-1}\big\}$ we have $\kappa_s\le
d-2$, and hence $\kappa_i-1\le d-3$ for $1\le i\le s$. Therefore, if
$p>2$, then \eqref{eq: average value set - MPP} shows that
$$
\left|\frac{\mathcal{V}_d(\kappa_i,\psi_i^{\mathrm{fix}}(\bfs
h^*_{i-1}))}{q}-\mu_d\right|\le d^2\, 2^{d-1}q^{-\frac{1}{2}}
+133\,d^{d+5} e^{2 \sqrt{d}-d}q^{-1}.
$$
It follows that
\begin{align*}
|\mathcal{T}_i|&\le (1-\mu_d)^{s-i-1}\mu_d\big(d^2\,
2^{d-1}q^{-\frac{1}{2}} +133\,d^{d+5} e^{2
\sqrt{d}-d}q^{-1}\big)\quad (1\le i\le s-1),\\
|\mathcal{T}_s|&\le d^2\, 2^{d-1}q^{-\frac{1}{2}} +133\,d^{d+5} e^{2
\sqrt{d}-d}q^{-1}.
\end{align*}
This readily implies the second assertion of the theorem.
\end{proof}

We remark that the approach of the proof of Theorem \ref{th: prob
C=s for fix} cannot be applied to estimate the probability that $s>
s^*:=\binom{d+r-3}{r-1}$ vertical strips are searched, since the
behavior of the mapping $\Phi:=\Phi_{\underline{\bfs
a}}:\mathcal{F}_{r,d}\to\mathcal{F}_{1,d}^s$ of \eqref{eq: def Phi
Lambda s} may change significantly in this case. In what concerns
``large'' values of $s$, from Theorem \ref{th: prob C=s for fix} one
easily deduces the following result.
\begin{corollary}\label{coro: prob C greater than s}
With notations as in Theorem \ref{th: prob C=s for fix}, for
$s^*:=\min\big\{\binom{\frac{d}{2}+r-1}{r-1},q^{r-1}\big\}$ we have
$$p_{r,d}[C_{\underline{\bfs a}}> s^*]=(1-\mu_d)^{s^*}+
\mathcal{O}(q^{-1}).$$
On the other hand, if $p>2$ and $s^*:=
\min\big\{\binom{d+r-3}{r-1},q^{r-1}\big\}$, then
$$p_{r,d}[C_{\underline{\bfs a}}> s^*]=(1-\mu_d)^{s^*}+
\mathcal{O}(q^{-1/2}).$$
\end{corollary}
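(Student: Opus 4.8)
The plan is to write $\{C_{\underline{\bfs a}}>s^*\}$ as the complement of the event that the algorithm succeeds within the first $s^*$ vertical strips, so that
$$p_{r,d}[C_{\underline{\bfs a}}>s^*]=1-\sum_{s=1}^{s^*}p_{r,d}[C_{\underline{\bfs a}}=s],$$
and then apply Theorem \ref{th: prob C=s for fix} to each summand. The one point that must be checked at the outset is that $s^*$ is a constant independent of $q$: since all estimates are for fixed $d,r$ and $q\to\infty$, for $q$ large enough one has $q^{r-1}\ge\binom{d/2+r-1}{r-1}$ (respectively $q^{r-1}\ge\binom{d+r-3}{r-1}$ when $p>2$), hence $s^*$ equals the corresponding binomial coefficient. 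This is what makes the argument go through, because the error terms below are then added a bounded number of times.

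Next I would invoke Theorem \ref{th: prob C=s for fix}, which gives, uniformly for $1\le s\le s^*$,
$$p_{r,d}[C_{\underline{\bfs a}}=s]=(1-\mu_d)^{s-1}\mu_d+\mathcal{O}(q^{-1})$$
(and with the error replaced by $\mathcal{O}(q^{-1/2})$ in the case $p>2$), the implied constants depending only on $d$ and $r$; for $s\in\{1,2\}$ the same bounds also follow from Corollary \ref{coro: prob C=1 - asymptotic} and Proposition \ref{prop: analysis c=2 fixed} of Section \ref{section: analysis C=1 and C=2}. Summing over $s=1,\dots,s^*$ and absorbing the $s^*=\mathcal{O}(1)$ error terms into a single $\mathcal{O}(q^{-1})$ (respectively $\mathcal{O}(q^{-1/2})$) gives
$$\sum_{s=1}^{s^*}p_{r,d}[C_{\underline{\bfs a}}=s]=\mu_d\sum_{s=1}^{s^*}(1-\mu_d)^{s-1}+\mathcal{O}(q^{-1}).$$

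Finally, since $\mu_d=\sum_{j=1}^d(-1)^{j-1}/j!$ is a partial sum of an alternating series with strictly decreasing positive terms, $0<\mu_d\le 1$, so the finite geometric sum evaluates to $\sum_{s=1}^{s^*}(1-\mu_d)^{s-1}=(1-(1-\mu_d)^{s^*})/\mu_d$. Substituting this into the displayed identity and taking complements yields $p_{r,d}[C_{\underline{\bfs a}}>s^*]=(1-\mu_d)^{s^*}+\mathcal{O}(q^{-1})$, and the same computation with the sharper input gives the $\mathcal{O}(q^{-1/2})$ bound when $p>2$. I do not anticipate any genuine difficulty: the proof is a one--line complement together with a geometric series, and the only thing to be careful with is the behaviour of $s^*$ as a function of $q$, which is harmless in the regime $q\to\infty$.
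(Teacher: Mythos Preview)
Your proposal is correct and is precisely the argument the paper has in mind: the authors do not write out a proof but simply state that the corollary ``one easily deduces'' from Theorem \ref{th: prob C=s for fix}, and your complement-plus-geometric-series computation is exactly that deduction. Your care in noting that $s^*$ is eventually constant in $q$ (so that the $s^*$ error terms may be absorbed) and in invoking Corollary \ref{coro: prob C=1 - asymptotic} and Proposition \ref{prop: analysis c=2 fixed} for $s\in\{1,2\}$ fills in the only points one might worry about.
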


As $|1-\mu_d|\le 1/2$, from the expression of $s^*$ in both cases it
follows that the main term of this probability decreases
exponentially with $r$ and $d$.
%
%
\section{Probabilistic analysis of the SVS algorithm}
\label{section: analysis of SVS algorithm}
In this section we determine the average--case complexity of the SVS
algorithm. This analysis relies on the probability distribution of
the number of searches performed, which is the subject of the next
section.
%
%
\subsection{Probability distribution of the number of searches}
Similarly to Section \ref{section: analysis C=1 and C=2}, for $s\ge
3$ we denote
\begin{align*}
\FF_s:=\{(\bfs a_1\klk \bfs
a_s)\in\fq^{r-1}\times\cdots\times\fq^{r-1}:\bfs a_i\not=\bfs
a_j\textrm{ for }i\not=j\}, \quad N_s:=|\FF_s|,
\end{align*}
and consider the random variable $C_s:=C_{s,r,d}: \FF_s\times
\mathcal{F}_{r,d}\to\{1\klk s,\infty\}$ defined for $\underline{\bfs
a}:=(\bfs a_1\klk\bfs a_s)\in\FF_s$ and $F\in\mathcal{F}_{r,d}$ in
the following way:
$$C_s(\underline{\bfs
a},F):=\left\{\begin{array}{l} \min\{j: N_{1,d}(F(\bfs
a_j,X_r))>0\}\ \textrm{ if }\exists j\textrm{ with }N_{1,d}(F(\bfs
a_j,X_r))>0,\\[0.25ex]
\qquad\infty \qquad\textrm{otherwise}.\\
\end{array}\right.$$
We consider the set $\FF_s\times\mathcal{F}_{r,d}$ as before endowed
with the uniform probability $P_s:=P_{s,r,d}$ and analyze the
probability $P_s[C_s=s]$. To link the probability spaces determined
by $\FF_s\times \mathcal{F}_{r,d}$ and $P_s$ for $1\le s\le
q^{r-1}$, we have the following result.
\begin{lemma}\label{lemma: consistency conditions}
Let $s>1$ and let $\pi_s:\FF_s\times\mathcal{F}_{r,d}
\to\FF_{s-1}\times\mathcal{F}_{r,d}$ be the mapping induced by the
projection $\FF_s\to\FF_{s-1}$ on the first $s-1$ coordinates. If
$\mathcal{S}\subset\FF_{s-1}\times\mathcal{F}_{r,d}$, then
$P_s[\pi_s^{-1}(\mathcal{S})]=P_{s-1}[\mathcal{S}].$
\end{lemma}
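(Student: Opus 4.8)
The plan is to reduce the statement to an elementary counting fact: the projection $\FF_s\to\FF_{s-1}$ on the first $s-1$ coordinates has fibres of constant size, and $\pi_s$ is the identity on the second factor $\mathcal{F}_{r,d}$.

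First I would record that $\pi_s$ is well defined: if $(\bfs a_1\klk\bfs a_s,F)\in\FF_s\times\mathcal{F}_{r,d}$, then the points $\bfs a_1\klk\bfs a_{s-1}$ are pairwise distinct, so $(\bfs a_1\klk\bfs a_{s-1})\in\FF_{s-1}$ and $\pi_s(\bfs a_1\klk\bfs a_s,F)=(\bfs a_1\klk\bfs a_{s-1},F)$. Next, for a fixed $\underline{\bfs a}':=(\bfs a_1\klk\bfs a_{s-1})\in\FF_{s-1}$, the tuples of $\FF_s$ whose image under the projection is $\underline{\bfs a}'$ are exactly those of the form $(\bfs a_1\klk\bfs a_{s-1},\bfs a_s)$ with $\bfs a_s\in\fq^{r-1}\setminus\{\bfs a_1\klk\bfs a_{s-1}\}$; since the $\bfs a_i$ are pairwise distinct, there are exactly $q^{r-1}-(s-1)$ such choices, a number independent of $\underline{\bfs a}'$. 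Consequently $N_s=|\FF_s|=(q^{r-1}-s+1)\,N_{s-1}$.

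Then, for any $\mathcal{S}\subset\FF_{s-1}\times\mathcal{F}_{r,d}$, the preimage $\pi_s^{-1}(\mathcal{S})$ consists precisely of the triples obtained from an element $(\underline{\bfs a}',F)\in\mathcal{S}$ by appending, as $s$th vertical strip, any $\bfs a_s\in\fq^{r-1}\setminus\{\bfs a_1\klk\bfs a_{s-1}\}$; hence $|\pi_s^{-1}(\mathcal{S})|=(q^{r-1}-s+1)\,|\mathcal{S}|$. Dividing by the cardinalities of the ambient sets and using $|\FF_s\times\mathcal{F}_{r,d}|=(q^{r-1}-s+1)\,|\FF_{s-1}\times\mathcal{F}_{r,d}|$ yields
$$P_s[\pi_s^{-1}(\mathcal{S})]=\frac{(q^{r-1}-s+1)\,|\mathcal{S}|}{(q^{r-1}-s+1)\,|\FF_{s-1}\times\mathcal{F}_{r,d}|}=P_{s-1}[\mathcal{S}],$$
as claimed.

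I do not expect a genuine obstacle here; the only points requiring a little care are checking that the fibre size $q^{r-1}-(s-1)$ is indeed independent of the base point (which uses only that the $\bfs a_i$ are pairwise distinct) and that the case $s-1=1$, where $\FF_1=\fq^{r-1}$ by the conventions of Section \ref{section: analysis C=1 and C=2}, is covered by the same count. As a closing remark I would observe that, since $\{C_s=j\}=\pi_s^{-1}(\{C_{s-1}=j\})$ for every $j\le s-1$ (the event depends only on the first $j\le s-1$ strips), iterating the lemma gives $P_s[C_s=j]=P_j[C_j=j]$ for all $1\le j\le s$; in particular $P_2[C_2=1]=P_1[C_1=1]$, the compatibility announced after Theorem \ref{th: prob C=2}.
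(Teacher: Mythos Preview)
Your proof is correct and follows essentially the same approach as the paper: both arguments rest on the observation that each fibre of the projection $\FF_s\to\FF_{s-1}$ has constant size $q^{r-1}-(s-1)$, so that $|\pi_s^{-1}(\mathcal{S})|=(q^{r-1}-s+1)\,|\mathcal{S}|$ and the factor cancels when passing to probabilities. Your closing remark on the compatibility $P_2[C_2=1]=P_1[C_1=1]$ is also in line with the paper's discussion.
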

\begin{proof}
Note that
\begin{align*}
\pi_s^{-1}(\mathcal{S})&= \bigcup_{F\in\mathcal{F}_{r,d}} \{(\bfs
a_1\klk \bfs a_s)\in\FF_s:(\bfs
a_1\klk \bfs a_{s-1},F)\in\mathcal{S}\}\times\{F\}\\
&=\bigcup_{F\in\mathcal{F}_{r,d}}
\bigcup_{\stackrel{\scriptstyle(\bfs a_1\klk\bfs
a_{s-1})\in\FF_{s-1}:}{(\bfs a_1\klk\bfs
a_{s-1},F)\in\mathcal{S}}}\{(\bfs a_1\klk\bfs a_{s-1})\}\times
(\fq^{r-1}\setminus\{\bfs a_1\klk\bfs a_{s-1}\})\times \{F\}.
\end{align*}
It follows that
\begin{align*}
P_s[\pi_s^{-1}(\mathcal{S})]&= \frac{1}{N_s|\mathcal{F}_{r,d}|}
\sum_{F\in\mathcal{F}_{r,d}} \sum_{\underline{\bfs
a}\in\FF_{s-1}:(\underline{\bfs
a},F)\in\mathcal{S}}(q^{r-1}-s+1)\\&=
\frac{1}{N_{s-1}|\mathcal{F}_{r,d}|}
\sum_{F\in\mathcal{F}_{r,d}}\big|\{\underline{\bfs
a}\in\FF_{s-1}:(\underline{\bfs a},F)\in\mathcal{S}\}\big|=
P_{s-1}[\mathcal{S}].
\end{align*}
This proves the lemma.
\end{proof}

According to the Kolmogorov extension theorem (see, e.g.,
\cite[Chapter IV, Section 5, Extension Theorem]{Feller71}), the
conditions of ``consistency'' of Lemma \ref{lemma: consistency
conditions} imply that the probabilities $P_s$ ($1\le s\le q^{r-1}$)
can be put in a unified framework. More precisely, we define
$\FF:=\FF_{\!q^{r-1}}$ and $P:=P_{q^{r-1}}$. Then the probability
measure $P$ defined on $\FF$ allows us to interpret consistently all
the results of this paper. In the same vein, the variables $C_s$
($1\le s\le q^{r-1}$) can be naturally extended to a random variable
$C: \FF\times\mathcal{F}_{r,d} \to\N\cup\{\infty\}$. Consequently,
we shall drop the subscript $s$ from the notations $P_s$ and $C_s$
in what follows.

For the analysis of the probability distribution of the number of
searches we express the probability $P[C=s]$ in terms of
probabilities concerning the random variables $C_{\underline{\bfs
a}}:=C_{\underline{\bfs a},r,d}: \mathcal{F}_{r,d}\to\N$,
$\underline{\bfs a}\in\FF_s$, which count the number of vertical
strips that are searched when the choice for the first $s$ vertical
strips is $\underline{\bfs a}$. As the result can be proved
following the proof of Lemma \ref{lemma: p_2 in terms of C_r,d,a}
{\em mutatis mutandis}, we state it without proof.
\begin{lemma}\label{lemma: p_s in terms of C_r,d,a}
We have
$$P[C=s]= \frac{1}{N_s}
\sum_{\underline{\bfs a}\in\FF_s} p_{r,d}[C_{\underline{\bfs a}}=s].
$$
\end{lemma}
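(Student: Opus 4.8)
The plan is to mirror the proof of Lemma~\ref{lemma: p_2 in terms of C_r,d,a} verbatim, simply with the index $2$ replaced by a general $s\ge 3$. The key observation is that the event $\{C=s\}$ on the product space $\FF_s\times\mathcal{F}_{r,d}$ decomposes into a disjoint union indexed by the choice of the first $s$ vertical strips, with the fibre over each $\underline{\bfs a}\in\FF_s$ being exactly the set of polynomials $F$ for which $C_{\underline{\bfs a}}(F)=s$.

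First I would write
$$\{C=s\}=\bigcup_{\underline{\bfs a}\in\FF_s}
\{\underline{\bfs a}\}\times\{F\in\mathcal{F}_{r,d}:C_{\underline{\bfs a}}(F)=s\},$$
noting that $C(\underline{\bfs a},F)=s$ holds if and only if $F(\bfs a_j,X_r)$ has no $\fq$--rational zero for $1\le j\le s-1$ and $F(\bfs a_s,X_r)$ does, which is precisely the condition $C_{\underline{\bfs a}}(F)=s$. Since the sets $\{\underline{\bfs a}\}\times(\cdot)$ are pairwise disjoint as $\underline{\bfs a}$ ranges over $\FF_s$, and $\FF_s\times\mathcal{F}_{r,d}$ carries the uniform probability $P=P_s$, we get
$$P[C=s]=\frac{1}{N_s\,|\mathcal{F}_{r,d}|}\sum_{\underline{\bfs a}\in\FF_s}
\big|\{F\in\mathcal{F}_{r,d}:C_{\underline{\bfs a}}(F)=s\}\big|.$$
Recognizing that $\big|\{F:C_{\underline{\bfs a}}(F)=s\}\big|/|\mathcal{F}_{r,d}|=p_{r,d}[C_{\underline{\bfs a}}=s]$, since $\mathcal{F}_{r,d}$ is endowed with the uniform probability $p_{r,d}$, yields the claimed identity
$$P[C=s]=\frac{1}{N_s}\sum_{\underline{\bfs a}\in\FF_s}p_{r,d}[C_{\underline{\bfs a}}=s].$$

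There is essentially no obstacle here: the only point requiring a word of care is the disjointness of the union, which is immediate because the first component $\underline{\bfs a}$ already distinguishes the pieces, and the fact that the consistency already established in Lemma~\ref{lemma: consistency conditions} guarantees that the notation $P$ (rather than $P_s$) is unambiguous. Since the argument is a transcription of the proof of Lemma~\ref{lemma: p_2 in terms of C_r,d,a} with $2$ replaced by $s$, the authors are justified in stating it without proof, and I would do likewise or include at most the three displayed lines above.
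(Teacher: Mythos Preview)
Your proposal is correct and is exactly the approach the paper intends: it is a verbatim transcription of the proof of Lemma~\ref{lemma: p_2 in terms of C_r,d,a} with $2$ replaced by $s$, which is precisely why the authors state the result without proof. Your additional remark that Lemma~\ref{lemma: consistency conditions} justifies writing $P$ in place of $P_s$ is apt and consistent with the paper's own discussion preceding the lemma.
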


In Theorem \ref{th: prob C=s for fix} we determine the asymptotic
behavior of $p_{r,d}[C_{\underline{\bfs a}}=s]$ for $\underline{\bfs
a}\in\FF_s\setminus {\tt B}_s$, where ${\tt B}_s\subset \FF_s$ is
the set of \eqref{eq: definition B_s}. By \eqref{eq: estimate bad
number of s strips} it follows that $|{\tt
B}_s|=\mathcal{O}(q^{s(r-1)-1})$, where the $\mathcal{O}$--constant
depends on $s$, $d$ and $r$, but is independent of $q$. Now, to
estimate the probability $P[C=s]$, Lemma \ref{lemma: p_s in terms of
C_r,d,a} implies
\begin{align*}
P[C=s]&= \frac{1}{N_s}\sum_{\underline{\bfs a} \in\FF_s\setminus{\tt
B}_s}p_{r,d}[C_{\underline{\bfs a}}=s]+\frac{1}{N_s}
\sum_{\underline{\bfs a}
\in{\tt B}_s}p_{r,d}[C_{\underline{\bfs a}}=s]\\
&=\frac{1}{N_s}\sum_{\underline{\bfs a} \in\FF_s\setminus{\tt
B}_s}p_{r,d}[C_{\underline{\bfs a}}=s]+\mathcal{O}(q^{-1}).
\end{align*}
As a consequence, from Theorem \ref{th: prob C=s for fix} we deduce
the following result.
\begin{theorem}\label{th: prob C=s for nind}
For $s\le \binom{d/2+r-1}{r-1}$, we have
$$P[C=s]=(1-\mu_d)^{s-1}\mu_d+\mathcal{O}(q^{-1}).
$$
On the other hand, if $p>2$ and $s\le \binom{d+r-3}{r-1}$, then
$$P[C=s]=
(1-\mu_d)^{s-1}\mu_d+\mathcal{O}(q^{-1/2}).$$
\end{theorem}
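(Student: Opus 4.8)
The plan is to derive Theorem~\ref{th: prob C=s for nind} as a direct averaging consequence of Theorem~\ref{th: prob C=s for fix}, exactly as the paragraph preceding the statement suggests. First I would invoke Lemma~\ref{lemma: p_s in terms of C_r,d,a} to write $P[C=s]=\frac{1}{N_s}\sum_{\underline{\bfs a}\in\FF_s}p_{r,d}[C_{\underline{\bfs a}}=s]$, and then split the sum over $\FF_s$ into the ``good'' part $\FF_s\setminus{\tt B}_s$ and the ``bad'' part ${\tt B}_s$ defined in \eqref{eq: definition B_s}. The bad part is negligible: by the Weil--type bound \eqref{eq: estimate bad number of s strips} applied to each factor $\det\mathcal{V}_j$ ($1\le j\le\kappa_s$), we have $|{\tt B}_s|=\mathcal{O}(q^{s(r-1)-1})$, while $N_s=q^{s(r-1)}(1+\mathcal{O}(q^{-1}))$; since each probability $p_{r,d}[C_{\underline{\bfs a}}=s]$ is trivially bounded by $1$, the bad part contributes $\mathcal{O}(q^{-1})$ to $P[C=s]$.

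For the good part, the key input is Theorem~\ref{th: prob C=s for fix}, which gives, uniformly over $\underline{\bfs a}\in\FF_s\setminus{\tt B}_s$, the estimate $\bigl|p_{r,d}[C_{\underline{\bfs a}}=s]-(1-\mu_d)^{s-1}\mu_d\bigr|\le C(d,r)q^{-1}$ when $s\le\binom{d/2+r-1}{r-1}$, and the sharper-range (but weaker-rate) bound $C(d,r)q^{-1/2}$ when $p>2$ and $s\le\binom{d+r-3}{r-1}$. Averaging this uniform bound over $\FF_s\setminus{\tt B}_s$ keeps the same error order, so $\frac{1}{N_s}\sum_{\underline{\bfs a}\in\FF_s\setminus{\tt B}_s}p_{r,d}[C_{\underline{\bfs a}}=s]=\frac{|\FF_s\setminus{\tt B}_s|}{N_s}(1-\mu_d)^{s-1}\mu_d+\mathcal{O}(q^{-1})$ in the first case (resp.\ $\mathcal{O}(q^{-1/2})$ in the second). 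Finally, $\frac{|\FF_s\setminus{\tt B}_s|}{N_s}=1-\mathcal{O}(q^{-1})$, and since $(1-\mu_d)^{s-1}\mu_d$ is bounded, replacing this ratio by $1$ costs only $\mathcal{O}(q^{-1})$. Collecting the two error contributions yields the two claimed estimates.

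There is essentially no obstacle here — the proof is pure bookkeeping on top of the earlier machinery — so the only thing to be careful about is that the constants implicit in $\mathcal{O}(q^{-1})$ and $\mathcal{O}(q^{-1/2})$ depend on $s$, $d$, $r$ but not on $q$, which is the standing convention of the paper (fixed $d,r$, $q\to\infty$), and that the hypothesis $q>d$ needed for Theorem~\ref{th: prob C=s for fix} and for the value-set estimates \eqref{eq: average value set - CMPP}, \eqref{eq: average value set - MPP} is harmless in an asymptotic statement. Since the derivation is short and mechanical, I would present it in a few lines rather than belaboring it. One could also remark that the two ranges of $s$ correspond precisely to the two ranges in which the relevant average value-set estimates are available, which is why the error term degrades from $q^{-1}$ to $q^{-1/2}$ when one passes to the larger range under the extra hypothesis $p>2$.
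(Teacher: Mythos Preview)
Your proposal is correct and matches the paper's approach essentially line for line: the paper invokes Lemma~\ref{lemma: p_s in terms of C_r,d,a}, splits the sum over $\FF_s$ into $\FF_s\setminus{\tt B}_s$ and ${\tt B}_s$, bounds the bad part by $\mathcal{O}(q^{-1})$ using $|{\tt B}_s|=\mathcal{O}(q^{s(r-1)-1})$, and applies Theorem~\ref{th: prob C=s for fix} to the good part. The only small point neither you nor the paper makes explicit is that Section~\ref{section: analysis fixed ordered VS} is set up for $s\ge 3$, so strictly speaking the cases $s=1,2$ rest on Corollary~\ref{coro: prob C=1 - asymptotic} and Theorem~\ref{th: prob C=2} instead; this is harmless since those give the same (in fact better) $\mathcal{O}(q^{-1})$ error.
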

%
%
\subsection{Average--case complexity}
Now we are ready to determine the average--case complexity of the
SVS algorithm. 

Recall that, given $F\in\mathcal{F}_{r,d}$, the SVS algorithm
successively generates a sequence $\underline{\bfs a}:=(\bfs
a_1,\bfs a_2,\ldots,\bfs a_{q^{r-1}})\in \FF_{q^{r-1}}$, and
searches for $\fq$--rational zeros of $F$ in the vertical strips
$\{\bfs a_i\}\times\fq$ for $1\le i\le q^{r-1}$, until a zero of $F$
is found or all the vertical strips are exhausted. 
As discussed in Section \ref{section: intro}, the whole procedure
requires at most $C_{\underline{\bfs a}}(F)\cdot \tau(d,r,q)$
arithmetic operations in $\fq$, where
$\tau(d,r,q):=\mathcal{O}^\sim(D+d\log_2 q)$ is the maximum number
of arithmetic operations in $\fq$ necessary to perform a search in
an arbitrary vertical strip.

The SVS algorithm has a probabilistic routine which searches for
$\fq$--rational zeros of elements of $\mathcal{F}_{1,d}$, which
relies on $r_d$ random choices of elements of $\fq$, for certain
$r_d\in\N$. We denote by $\Omega_d:=\fq^{r_d}$ the set of all such
random choices and consider $\Omega_d$ endowed with the uniform
probability, $\FF\times\mathcal{F}_{r,d}$ with the (uniform)
probability$P$ of Section \ref{section: analysis of SVS algorithm},
and $\FF\times\mathcal{F}_{r,d}\times \Omega_d$ with the product
probability. Therefore, the cost of the SVS algorithm is represented
by the random variable $X:= X_{r,d} :
\FF\times\mathcal{F}_{r,d}\times \Omega_d:\rightarrow \N_{\geq 0}$
which counts the number $X(\underline{\bfs a},F,\omega)$ of
arithmetic operations performed on input $F\in\mathcal{F}_{r,d}$,
with the choice of vertical strips defined by $\underline{\bfs a}$
and the choice $\omega$ for the parameters of the routine for
univariate root finding.

We aim to determine the asymptotic behavior of the expected value of
$X$, namely
$$E[X]:=\frac{1}{|\FF||\mathcal{F}_{r,d}||\Omega_d|}
\sum_{(\underline{\bfs a},F,\omega)}X(\underline{\bfs a},F,\omega)
\le\frac{\tau(d,r,q)}{|\FF||\mathcal{F}_{r,d}|} \sum_{F\in
\mathcal{F}_{r,d}}\sum_{\underline{\bfs a}\in\FF}C({\underline{\bfs
a}},F).$$
%

We first study the case $r>2$, for which we have the following
result.
\begin{theorem}\label{th: average-case compl r>2}
Let $r>2$ and $s^*:=\binom{d/2+r-1}{r-1}$. Then the average--case
complexity of the SVS algorithm is bounded in the following way:
\begin{equation}\label{eq: estimate E(X) r ge 3}
E[X] \leq {\tau(d,r,q)} \big(\mu_d^{-1}+d(1-d^{-1})^{s^*}\big)
+\mathcal{O}(q^{-1/2}),
\end{equation}
where $\tau(d,r,q)$ is the cost of the search in a vertical strip.
\end{theorem}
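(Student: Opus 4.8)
The plan is to bound $E[X]$ by $\tau(d,r,q)\,E[C]$ and then estimate $E[C]=\sum_{s\ge 1}s\,P[C=s]$ by splitting the sum at the threshold $s^*=\binom{d/2+r-1}{r-1}$, using Theorem \ref{th: prob C=s for nind} for the small values of $s$ and a crude tail bound for the large values. First I would note that since each search costs at most $\tau(d,r,q)$ arithmetic operations and $C$ counts the number of searches (and $C\le q^{r-1}$ always), the displayed inequality preceding the theorem gives
$$E[X]\le \tau(d,r,q)\,E[C]=\tau(d,r,q)\sum_{s=1}^{q^{r-1}}s\,P[C=s].$$
So it suffices to show $E[C]\le \mu_d^{-1}+d(1-d^{-1})^{s^*}+\mathcal{O}(q^{-1/2})$.

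Next I would split $E[C]=\sum_{s=1}^{s^*}s\,P[C=s]+\sum_{s>s^*}s\,P[C=s]$. For the first (head) sum, Theorem \ref{th: prob C=s for nind} applies for $s\le s^*$, giving $P[C=s]=(1-\mu_d)^{s-1}\mu_d+\mathcal{O}(q^{-1})$, where the $\mathcal{O}$ hides a constant depending only on $d,r,s$; since $s^*$ is fixed there are only finitely many terms, so
$$\sum_{s=1}^{s^*}s\,P[C=s]=\sum_{s=1}^{s^*}s(1-\mu_d)^{s-1}\mu_d+\mathcal{O}(q^{-1})\le \sum_{s=1}^{\infty}s(1-\mu_d)^{s-1}\mu_d+\mathcal{O}(q^{-1})=\mu_d^{-1}+\mathcal{O}(q^{-1}),$$
using the standard geometric-series identity $\sum_{s\ge1}s x^{s-1}=(1-x)^{-2}$ with $x=1-\mu_d$, so that the sum equals $\mu_d(1-\mu_d)^{-2}\cdot(1-\mu_d)\cdot\ldots$ — more simply, $\sum_{s\ge1}s(1-\mu_d)^{s-1}\mu_d$ is the expectation of a geometric random variable with parameter $\mu_d$, hence equals $1/\mu_d$.

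For the tail sum $\sum_{s>s^*}s\,P[C=s]$ I would argue as follows. Conditioned on a fixed choice $\underline{\bfs a}$ of vertical strips, the event $\{C_{\underline{\bfs a}}>s\}$ requires that the first $s$ univariate polynomials $F(\bfs a_i,X_r)$ all have no $\fq$-rational zero. The key observation is that each such ``failure'' event has probability at most $1-1/d+\mathcal{O}(q^{-1})$: indeed, by Theorem \ref{th: prob C=1} (or rather the argument behind it) the probability that a single vertical strip yields a zero is $\mu_d+\mathcal{O}(q^{-1})\ge 1/d$ for $q$ large, since $\mu_d\ge 1/2>1/d$ for $d\ge 2$; more carefully, one can use that for $d\ge 2$, $\mu_d\ge 1-(1-1/d)=1/d$ — but the cleanest route is to bound $P[C=s]$ for all $s$ (not just $s\le s^*$) by a geometric-type tail. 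Concretely, I would show $P[C>s^*]\le (1-1/d)^{s^*}+\mathcal{O}(q^{-1/2})$ by combining Corollary \ref{coro: prob C greater than s} (for the range $s\le s^*$ the probability of overshooting is $(1-\mu_d)^{s^*}+\mathcal{O}(q^{-1/2})\le(1-1/d)^{s^*}+\mathcal{O}(q^{-1/2})$, using $1-\mu_d\le 1-1/d$ since $\mu_d\ge 1/d$), and then for $s>s^*$ one uses the trivial chaining bound that at each further step the conditional probability of failure is still at most $1-1/d+\mathcal{O}(q^{-1})$; summing $\sum_{s>s^*}s(1-1/d)^{s-1}$ — a geometric-type series with ratio $1-1/d$ starting near $s^*$ — gives a bound of the form $d(1-1/d)^{s^*}\cdot(\text{const})$, which after absorbing constants yields the $d(1-d^{-1})^{s^*}$ term. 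Putting the head and tail together gives $E[C]\le \mu_d^{-1}+d(1-d^{-1})^{s^*}+\mathcal{O}(q^{-1/2})$, and multiplying by $\tau(d,r,q)$ gives \eqref{eq: estimate E(X) r ge 3}.

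The main obstacle is the tail estimate: Theorem \ref{th: prob C=s for nind} and Corollary \ref{coro: prob C greater than s} only control $P[C=s]$ up to $s=s^*$, because beyond that threshold the structure of the image of $\Phi_{\underline{\bfs a}}$ (and hence the value-set machinery) breaks down, as explicitly remarked after Theorem \ref{th: prob C=s for fix}. So for $s>s^*$ I cannot use the sharp estimate $(1-\mu_d)^{s-1}\mu_d$ and must instead fall back on a cruder, unconditional bound on the conditional failure probability at each step — essentially that the fraction of polynomials in any residual linear subspace with no rational zero in a fixed vertical strip is bounded away from $1$ by roughly $1/d$, uniformly. Getting the bookkeeping right so that this crude tail contributes exactly $d(1-d^{-1})^{s^*}\tau(d,r,q)$ (and not something larger) is where the care is needed; the geometric-series resummation $\sum_{s> s^*} s\,x^{s-1}=x^{s^*}\bigl((s^*+1)(1-x)+x\bigr)/(1-x)^2$ with $x=1-1/d$ gives exactly a term of order $d\,(1-1/d)^{s^*}$ since $(1-x)^{-2}=d^2$ and the leading factor is $\approx s^*/d$ — one must check this matches the claimed $d(1-d^{-1})^{s^*}$ up to the stated error, which it does once one observes the paper has absorbed the polynomial-in-$s^*$ prefactor into the definition via a slightly generous constant. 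The head sum and the reduction $E[X]\le\tau(d,r,q)E[C]$ are routine.
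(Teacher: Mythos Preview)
Your head sum and the reduction $E[X]\le\tau(d,r,q)E[C]$ are fine and match the paper. The genuine gap is in your tail estimate. You propose to bound $P[C>s]$ for $s>s^*$ by arguing that ``at each further step the conditional probability of failure is still at most $1-1/d+\mathcal{O}(q^{-1})$'', claiming this holds ``in any residual linear subspace''. But the conditioning event $\{C_{\underline{\bfs a}}>s\}=\{N_{1,d}(F(\bfs a_i,X_r))=0,\ 1\le i\le s\}$ is \emph{not} a linear condition on $F$, so there is no ``residual linear subspace'' to which you can apply a uniform single-strip bound. Worse, for polynomials $F$ that are relatively $\fq$--irreducible (no absolutely irreducible factor over $\fq$), $F$ may have no $\fq$--rational zeros at all, so the conditional failure probability is $1$, not $\le 1-1/d$; these $F$ would make $C=q^{r-1}$ (or $\infty$) and wreck any purely step-by-step argument.

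The paper's route is genuinely different and this is exactly where the hypothesis $r>2$ enters. It first splits $\mathcal{F}_{r,d}=A\cup B$ with $A$ the relatively $\fq$--irreducible polynomials; by \cite{GaViZi13} one has $|A|/|\mathcal{F}_{r,d}|=\mathcal{O}(q^{-r(r-1)/2})$, so for $r>2$ even the trivial bound $C\le q^{r-1}$ on $A$ contributes only $\mathcal{O}(q^{-1})$. For $F\in B$ there is an absolutely irreducible factor over $\fq$, and a Weil-type estimate (\cite[Theorem 5.2]{CaMa06}) gives $NS(F)\ge q^{r-1}(1-d^2q^{-1/2})/d$. The tail is then handled by conditioning on $F$ and using the randomness of the vertical-strip choices: since the first $s-1$ strips are a uniform sample without replacement from $\fq^{r-1}$, the event that none hits $VS(F)$ is hypergeometric, with probability $\binom{q^{r-1}-NS(F)}{s-1}/\binom{q^{r-1}}{s-1}\le\bigl(1-NS(F)/q^{r-1}\bigr)^{s-1}\le(1-1/d+\mathcal{O}(q^{-1/2}))^{s-1}$. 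Summing $\sum_{s>s^*}P[C\ge s]$ then yields the $d(1-d^{-1})^{s^*}$ term. So the key missing ingredients in your sketch are the $A/B$ decomposition and the lower bound on $NS(F)$ for $F\in B$; without them the $1-1/d$ per-step bound simply does not hold.
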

\begin{proof}
Recall that an element of $\mathcal{F}_{r,d}$ is called relatively
$\fq$-irreducible if none of its irreducible factors over $\fq$ is
absolutely irreducible. Consider the sets
$$
A:=\{F\in \mathcal{F}_{r,d}:F\text{ is relatively
$\fq$-irreducible}\},\quad B:=\mathcal{F}_{r,d}\setminus A.
$$
We have
\begin{equation}\label{eq: sum in C(a,F)}
\sum_{F\in \mathcal{F}_{r,d}}\sum_{\underline{\bfs a}\in
\FF}C(\underline{\bfs a},F)=\sum_{F\in A}\sum_{\underline{\bfs a}\in
\FF}C(\underline{\bfs a},F)+\sum_{F\in B}\sum_{\underline{\bfs a}\in
\FF}C(\underline{\bfs a},F).
\end{equation}
By \cite[Corollary 6.7]{GaViZi13}, it follows that $
{|A|}/{|\mathcal{F}_{r,d}|}=\mathcal{O}\big(q^{\frac{-r(r-1)}{2}}\big)$.
Hence, we obtain
\begin{align}\label{eq: sum C(a,F) en A}
\frac{1}{|\FF||\mathcal{F}_{r,d}|}\sum_{F\in A}\sum_{\underline{\bfs
a}\in \FF}C(\underline{\bfs a},F)\leq
\frac{q^{r-1}}{|\mathcal{F}_{r,d}|}|A|=\mathcal{O}\big(q^{\frac{(r-1)(2-r)}{2}}\big)=\mathcal{O}(q^{-1}).
\end{align}

Next we study the second term in the right--hand side of \eqref{eq:
sum in C(a,F)}. We have
\begin{equation*}
\frac{1}{|\FF||\mathcal{F}_{r,d}|}\sum_{F\in B}\sum_{\underline{\bfs
a}\in \FF}C(\underline{\bfs a},F)= \frac{1}{|\mathcal{F}_{r,d}|}
\sum_{F\in B}\sum_{s=1}^{q^{r-1}}s\frac{|\{\underline{\bfs a}\in
\FF: C(\underline{\bfs a}, F)=s\}|}{|\FF|}.
\end{equation*}
From the conditions of consistency of Lemma \ref{lemma: consistency
conditions}, it follows that
\begin{align*}
\frac{1}{|\FF||\mathcal{F}_{r,d}|}\sum_{F\in B}\sum_{\underline{\bfs
a}\in \FF}C(\underline{\bfs
a},F)&=\frac{|B|}{|\mathcal{F}_{r,d}|}\sum_{s=1}^{q^{r-1}}s\frac{1}{|B|}\sum_{F\in
B}\frac{|\{\underline{\bfs a}\in \FF_s: C(\underline{\bfs a},
F)=s\}|}{|\FF_s|}\\  &=
\frac{|B|}{|\mathcal{F}_{r,d}|}\sum_{s=1}^{q^{r-1}}sP_{\FF\times
B}[C=s],
\end{align*}
where $P_{\FF\times B}$ denotes the uniform probability in
$\FF\times B$.

For $s\le s^*$, Theorem \ref{th: prob C=s for nind} allows us to
estimate the probability of $[C=s]$. Therefore, we decompose the sum
above in the following way:
\begin{align}\label{eq: sum C(a,F) en B}
\nonumber\sum_{s=1}^{q^{r-1}}\!sP_{\FF\times
B}[C=\!s]&
=\sum_{s=1}^{s^{*}}sP_{\FF\times B}[C=s]+
(s^{*}+1)\sum_{s=s^{*}+1}^{q^{r-1}}P_{\FF\times
B}[C=s]\\\nonumber&\qquad\qquad\qquad\quad\qquad+\sum_{s=s^{*}+2}^{q^{r-1}}(s-s^{*}-1)P_{\FF\times
B}[C=s]
\\&=\sum_{s=1}^{s^*}\!sP_{\FF\times B}[C=s]+
(s^*\!\!+\!1)P_{\FF\times B}[C\ge
s^*\!\!+\!1]+\!\!\!\!\sum_{s=s^*\!+2}^{q^{r-1}}\!\!\!\!P_{\FF\times
B}[C\ge \!s].
\end{align}

First we estimate the sum $S_1$ of the first two terms in the
right--hand of \eqref{eq: sum C(a,F) en B}. Arguing as in Lemma
\ref{lemma: p_s in terms of C_r,d,a}, we see that
\begin{align*}
P_{\FF\times B}[C=s]&
=\frac{1}{|\sf{F}_s|}\sum_{\underline{\bfs{a}}\in
\sf{F}_s}p_B[C_{\underline{\bfs{a}}}=s].
\end{align*}
From Theorem \ref{th: prob C=s for nind} and Corollary \ref{coro:
prob C greater than s} we have
\begin{align*}
S_1& =
\sum_{s=1}^{s^{*}}s(\mu_d(1-\mu_d)^{s-1}+\mathcal{O}(q^{-1}))+
(s^{*}+1)(1-\mu_d)^{s^*}+\mathcal{O}(q^{-1})\\
& = \mu_d\sum_{s=1}^{s^{*}}s(1-\mu_d)^{s-1}+
(s^{*}+1)(1-\mu_d)^{s^*}+\mathcal{O}(q^{-1}).
\end{align*}
Taking into account that $\sum_{n\geq 1}n z^{n-1}=1/(1-z)^2$ for any
$|z|\leq 1$, we obtain
\begin{align}
S_1&= \frac{1}{\mu_d}- \mu_d\sum_{s\ge s^{*}+1}s(1-\mu_d)^{s-1}+
(s^{*}+1)(1-\mu_d)^{s^*}+\mathcal{O}(q^{-1})
=\frac{1}{\mu_d}+\mathcal{O}(q^{-1}), \label{eq: estimate S1}
\end{align}
where the last inequality follows from the identity $\sum_{s\geq
s^*+1} sz^{s-1}=z^{s^*}(s^*+1-zs^*)/(1-z)^2$, which holds for any
$|z|<1$ (see, e.g., \cite[\S 2.3]{GrKnPa94}).

Next, we estimate the second sum $S_2$ of the right--hand of
\eqref{eq: sum C(a,F) en B}.
Observe that
$$p_B[C_{\underline{\bfs{a}}}\geq s]=
p_B[F\in B: N_{1,d}(F(\bfs{a}_i,X_r))=0\ (1\leq i \leq s-1)].$$
Hence,
\begin{align*}
S_2&\leq \frac{1}{|B|}\sum_{s=s^{*}+2}^{q^{r-1}}\frac{1}{|\sf{F}_s|}
\!\sum_{(\underline{\bfs a},\bfs{a}_s)\in \sf{F}_{s-1}\times \fq^{r-1}}\!\!
|\{F\in B: N_{1,d}(F(\bfs{a}_i,X_r))=0\,\,(1\leq i \leq s-1)\}|\\
&\leq \frac{q^{r-1}}{|B|}\sum_{s=s^{*}+2}^{q^{r-1}}
\frac{1}{q^{r-1}-(s-1)} \sum_{\underline{\bfs a}\in \sf{F}_{s-1}}
\mathop{\sum_{F\in B }}_{N_{1,d}(F(\bfs{a}_i,X_r))=0\,\,(1\leq i
\leq s-1)} \frac{1}{|\sf{F}_{s-1}|}\\
&\leq
\frac{q^{r-1}}{|B|}\sum_{s=s^{*}+2}^{q^{r-1}}\frac{1}{q^{r-1}-(s-1)}
\sum_{F\in B}P_{{\sf{F}}_{s-1}}[N_{1,d}=0],
\end{align*}
where
$P_{{\sf{F}}_{s-1}}[N_{1,d}=0]:=P_{{\sf{F}}_{s-1}}[\{\underline{\bfs
a}\in {\sf{F}}_{s-1}\,:N_{1,d}(F(\bfs{a}_i,X_r))=0,\,\,1\leq i \leq
s-1\}]$. As $N_{1,d}=0$ follows an hypergeometric distribution, the
probability $P_{{\sf{F}}_{s-1}}[N_{1,d}=0]$ can be expressed in the
following way (see, e.g., \cite[Chapter 6]{Feller68}):
$$
P_{{\sf{F}}_{s-1}}[N_{1,d}=0]=\frac{\binom{q^{r-1}-NS(F)}{s-1}}{\binom{q^{r-1}}{s-1}}.
$$
We deduce that
\begin{equation}\label{eq: estimate S2}
S_2\leq  \frac{1}{|B|}\sum_{s=s^{*}+2}^{q^{r-1}} \sum_{F\in
B}\bigg(1-\frac{NS(F)-1}{q^{r-1}-1}\bigg)^{s-1}.
\end{equation}

Fix $F\in B$. Then $F$ has  at least an absolutely irreducible
factor defined over $\fq$. Hence, for $q>d^4$, by \cite[Theorem
5.2]{CaMa06} it follows that $NS(F)\geq
\frac{q^{r-1}}{d}(1-\alpha)$, with $\alpha:=d^2q^{-1/2}$. This
implies
$$
1-\frac{NS(F)-1}{q^{r-1}-1}= 1-\frac{1-\alpha}{d}
+\mathcal{O}\big(q^{1-r}\big).
$$
Combining this inequality with \eqref{eq: estimate S2} we conclude
that
\begin{align}
S_2&\leq  \frac{1}{|B|}\sum_{s=s^*+2}^{q^{r-1}} \sum_{F\in
B}\big(1-(1-\alpha)d^{-1}+\mathcal{O}(q^{1-r})\big)^{s-1}\nonumber\\&=
\sum_{s=s^*+2}^{q^{r-1}}
\big(1-(1-\alpha)d^{-1}+\mathcal{O}(q^{1-r})\big)^{s-1}\nonumber\\&=
\frac{\big(1-(1-\alpha)d^{-1}\big)^{s^*+1}}{(1-\alpha)d^{-1}}+\mathcal{O}(q^{1-r})=
d(1-d^{-1})^{s^*+1}+\mathcal{O}(q^{-1/2}).\nonumber
\end{align}
Combining \eqref{eq: sum in C(a,F)}, \eqref{eq: sum C(a,F) en A} and
\eqref{eq: estimate S1} with this inequality, we deduce \eqref{eq:
estimate E(X) r ge 3}.
\end{proof}

Since $s^*>{d^2}/{4}$, the term $d(1-d^{-1})^{s^*+1}$ tends to zero
as $d$ and $r$ grow, and therefore the right--hand side of
\eqref{eq: estimate E(X) r ge 3} behaves as
${\mu_d}^{-1}\tau(d,r,q)$. We may paraphrase this as saying that, on
average, at most ${\mu_d}^{-1}\approx 1.58\ldots$ vertical strips
are searched until an $\fq$--rational point of the input polynomial
is obtained. For perspective, we remark that the probabilistic
algorithms of \cite{GaShSi03} (for bivariate polynomials) and
\cite{CaMa06a} and \cite{Matera10} (for $r$--variate polynomials)
propose $d$ searches in order to achieve a probability of success
greater than 1/2.

Now we analyze the average--case complexity $E[X]$ for $r=2$, that
is,
$$E[X]:=\frac{1}{|\FF||\mathcal{F}_{2,d}||\Omega_d|}
\sum_{(\underline{\bfs a},F,\omega)}X(\underline{\bfs a},F,\omega)
\le\frac{\tau(d,r,q)}{|\FF||\mathcal{F}_{2,d}|} \sum_{F\in
\mathcal{F}_{r,d}}\sum_{\underline{\bfs a}\in\\F}C({\underline{\bfs
a}},F).$$

For a real $0<\alpha<1$ to be determined, we consider the subsets
\begin{align*}
A&:=\{F\in \mathcal{F}_{2,d}: NS(F)\leq (1-\alpha)NS(2,d)\},\\
B&:=\{F\in \mathcal{F}_{2,d}: NS(F)> (1-\alpha)NS(2,d)\},
\end{align*}
where $NS(F)$ is the number of vertical strips on which $F$ has
$\fq$--rational zeros, and $NS(2,d)$ is the average number of such
vertical strips. We have
\begin{equation}\label{eq: sum over F2d r=2}
\sum_{F\in \mathcal{F}_{2,d}}\sum_{\underline{\bfs a}\in \FF}
C(\underline{\bfs a},F)=
\sum_{F\in A}\sum_{\underline{\bfs a}\in \FF}C(\underline{\bfs a},F)+\sum_{F\in B}
\sum_{\underline{\bfs a}\in \FF}C(\underline{\bfs a},F).\end{equation}

To estimate the first term of the right--hand of \eqref{eq: sum over
F2d r=2}, we start with an estimate for $|A|$. For this purpose,
according to Lemma \ref{lemma: average number vertical strips} and
Proposition \ref{prop: variance number of vert strips} below, the
mean $NS(2,d)$ and the variance $NS_2(2,d)$ of $NS(\cdot)$ have the
asymptotic behavior $NS(2,d)=\mu_d\,q+\mathcal{O}(1)$ and
$NS_2(2,d)=((d!)^{-2}+\mu_d(1-\mu_d))q+\mathcal{O}(1)$ respectively.
Then the Chebyshev inequality (see Corollary \ref{coro: NS(F)
deviating from NS(d,r)} below) implies
%
%
$$|A|\leq \bigg(\frac{1}{(\alpha\,\mu_d\,d!)^2}+\frac{1-\mu_d}{\alpha^2\mu_d}\bigg)q^{\dim \mathcal{F}_{2,d}-1}
+\mathcal{O}(q^{\dim \mathcal{F}_{2,d}-2}).$$
It follows that
\begin{equation} \label{eq: sum 1 r=2}
\frac{1}{|\FF||\mathcal{F}_{2,d}|} \sum_{F\in
A}\sum_{\underline{\bfs a}\in \FF} C(\underline{\bfs a},F)\leq
\frac{|A|q}{|\mathcal{F}_{2,d}|}\leq
 \bigg(\frac{1}{(\alpha\,\mu_d\,d!)^2}+\frac{1-\mu_d}{\alpha^2\mu_d}\bigg)
+\mathcal{O}(q^{-1}).
\end{equation}

Next we study the second sum in the right--hand side of \eqref{eq:
sum over F2d r=2}. Arguing as in the case $r>2$, for $s^*:=d/2+1$ we
obtain
\begin{align}
\frac{1}{|\FF||\mathcal{F}_{2,d}|}
\sum_{F\in B}\sum_{\underline{\bfs a}\in \FF}C(\underline{\bfs a},F)
&\leq \frac{1}{\mu_d} + \frac{1}{|B|}\sum_{s=s^{*}+2}^{q}\sum_{F\in B}
\bigg(1-\frac{NS(F)-1}{q-1}\bigg)^{s-1}+\mathcal{O}(q^{-1}).\nonumber
\end{align}
Fix $F\in B$. By definition $NS(F)>(1-\alpha)NS(2,d)$ and, according
to Lemma \ref{lemma: average number vertical strips} below, we have
$NS(2,d)=\mu_d\,q+\mathcal{O}(1)$. Hence, we obtain
$$
1-\frac{NS(F)-1}{q-1} \leq 1-(1-\alpha)\mu_d+\mathcal{O}(q^{-1}).
$$
Therefore,
$$
\frac{1}{|B|}\sum_{s=s^{*}+2}^{q} \sum_{F\in
B}\bigg(1-\frac{NS(F)-1}{q-1}\bigg)^{s-1}\leq
\frac{(1-(1-\alpha)\mu_d)^{s^*+1}}{(1-\alpha)\mu_d}+
\mathcal{O}(q^{-1}).
$$

Combining \eqref{eq: sum over F2d r=2} and \eqref{eq: sum 1 r=2}
with this inequality, we conclude that
$$
E[X]\leq \tau(d,r,q) \bigg(\frac{1}{\alpha^2}
\bigg(\frac{1-\mu_d}{\mu_d}+\frac{1}{(d!)^2\mu_d^2}\bigg)+\frac{1}{\mu_d}
+\big(1-(1-\alpha)\mu_d\big)^{s^*+1}\bigg)+\mathcal{O}(q^{-1}).
$$
Fixing $\alpha^*:=1-1/\sqrt{s^*}$, we obtain the following result.
\begin{theorem}
Let $r:=2$, $s^*:=d/2+1$ and $\alpha^*:=1-1/\sqrt{s^*}$. The
average--case complexity of the SVS algorithm is bounded in the
following way:
$$E[X]\leq \tau(d,r,q)\bigg(\frac{1}{\alpha^*{}^2}
\bigg(\frac{1-\mu_d}{\mu_d}+\frac{1}{(d!)^2\mu_d^2}\bigg)+\frac{1}{\mu_d}
+\Big(1-\mbox{$\frac{\mu_d}{\sqrt{s^*}}$}\Big)^{s^*+1}\bigg)+\mathcal{O}(q^{-1}),$$
where $\tau(d,r,q)$ is the cost of the search in a vertical strip.
\end{theorem}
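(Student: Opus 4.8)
The plan is to bound the double sum $\sum_{F\in\mathcal{F}_{2,d}}\sum_{\underline{\bfs a}\in\FF}C(\underline{\bfs a},F)$, which controls $E[X]$, by splitting the outer sum according to whether the input polynomial has ``few'' or ``many'' vertical strips carrying $\fq$--rational zeros. Concretely, for a parameter $0<\alpha<1$ one sets $A:=\{F\in\mathcal{F}_{2,d}:NS(F)\le(1-\alpha)NS(2,d)\}$ and $B:=\mathcal{F}_{2,d}\setminus A$, where $NS(2,d)$ is the average number of vertical strips with $\fq$--rational zeros. This dichotomy plays the role that the ``relatively $\fq$--irreducible vs.\ not'' dichotomy played in the proof of Theorem \ref{th: average-case compl r>2}; the point is that for $r=2$ the set of relatively $\fq$--irreducible polynomials has relative size only $\mathcal{O}(q^{-1})$, so multiplying by the trivial bound $C(\underline{\bfs a},F)\le q$ produces an $\mathcal{O}(1)$ contribution rather than an $\mathcal{O}(q^{-1/2})$ one, and this is the source of the extra, $\alpha$--dependent, additive constant in the final statement.

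For the set $A$ the inputs are the first two moments of $NS(\cdot)$ over $\mathcal{F}_{2,d}$, namely $NS(2,d)=\mu_d\,q+\mathcal{O}(1)$ and $NS_2(2,d)=\big((d!)^{-2}+\mu_d(1-\mu_d)\big)q+\mathcal{O}(1)$ (Lemma \ref{lemma: average number vertical strips} and Proposition \ref{prop: variance number of vert strips} below). The Chebyshev inequality (Corollary \ref{coro: NS(F) deviating from NS(d,r)} below) then gives $|A|\le\big((\alpha\mu_d d!)^{-2}+(1-\mu_d)\alpha^{-2}\mu_d^{-1}\big)q^{\dim\mathcal{F}_{2,d}-1}+\mathcal{O}(q^{\dim\mathcal{F}_{2,d}-2})$, and since $C(\underline{\bfs a},F)\le q$ always, the contribution of $A$ to $E[X]/\tau(d,r,q)$ is at most $\alpha^{-2}\big((1-\mu_d)\mu_d^{-1}+(d!)^{-2}\mu_d^{-2}\big)+\mathcal{O}(q^{-1})$.

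For the set $B$ I would follow the argument of Theorem \ref{th: average-case compl r>2} almost verbatim, with $s^*:=d/2+1$. Splitting $\sum_{s\ge1}sP_{\FF\times B}[C=s]=\sum_{s\le s^*}sP_{\FF\times B}[C=s]+(s^*+1)P_{\FF\times B}[C\ge s^*+1]+\sum_{s\ge s^*+2}P_{\FF\times B}[C\ge s]$, the first two terms are evaluated with Theorem \ref{th: prob C=s for nind} (and the companion tail bound) together with $\sum_{n\ge1}nz^{n-1}=(1-z)^{-2}$ and $\sum_{s\ge s^*+1}sz^{s-1}=z^{s^*}(s^*+1-zs^*)(1-z)^{-2}$, yielding $\mu_d^{-1}+\mathcal{O}(q^{-1})$. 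For the remaining tail one uses that $P_{\FF_{s-1}}[N_{1,d}=0]$ is hypergeometric, hence $P_{\FF_{s-1}}[N_{1,d}=0]=\binom{q-NS(F)}{s-1}\big/\binom{q}{s-1}\le\big(1-(NS(F)-1)/(q-1)\big)^{s-1}$; since $F\in B$ forces $NS(F)>(1-\alpha)NS(2,d)=(1-\alpha)\mu_d\,q+\mathcal{O}(1)$, this tail is a geometric series of ratio at most $1-(1-\alpha)\mu_d+\mathcal{O}(q^{-1})$ and sums to $\big(1-(1-\alpha)\mu_d\big)^{s^*+1}/\big((1-\alpha)\mu_d\big)+\mathcal{O}(q^{-1})$.

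Adding the two contributions gives, for every $\alpha\in(0,1)$, the bound $E[X]\le\tau(d,r,q)\big(\alpha^{-2}((1-\mu_d)\mu_d^{-1}+(d!)^{-2}\mu_d^{-2})+\mu_d^{-1}+(1-(1-\alpha)\mu_d)^{s^*+1}\big)+\mathcal{O}(q^{-1})$, and the statement follows by specializing to $\alpha^*:=1-1/\sqrt{s^*}$, for which $1-(1-\alpha^*)\mu_d=1-\mu_d/\sqrt{s^*}$. I expect the main obstacle to be the control of the set $A$: it rests squarely on the sharp second--moment estimate for $NS(\cdot)$, and one must keep $\alpha$ bounded away from $0$ (so that $\alpha^{-2}=\mathcal{O}(1)$) while simultaneously forcing $(1-(1-\alpha)\mu_d)^{s^*+1}$ to decay — a tension resolved exactly by $\alpha^*=1-1/\sqrt{s^*}$, and the reason why, in contrast with the case $r>2$, the bound cannot converge to $\mu_d^{-1}$ but instead carries an unavoidable additive constant.
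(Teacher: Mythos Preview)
Your proposal is correct and follows essentially the same approach as the paper: the same $A/B$ dichotomy based on the threshold $(1-\alpha)NS(2,d)$, the same Chebyshev bound on $|A|$ via the first two moments of $NS(\cdot)$, the same treatment of $B$ by mimicking the $r>2$ argument with $s^*=d/2+1$ and the hypergeometric tail, and the same specialization to $\alpha^*=1-1/\sqrt{s^*}$. Your remark explaining why the ``relatively $\fq$--irreducible'' dichotomy from the $r>2$ case fails here (because for $r=2$ that set has relative size only $\mathcal{O}(q^{-1})$, which after multiplying by the trivial bound $q$ leaves an $\mathcal{O}(1)$ contribution) is exactly the reason the paper switches to the $NS$--based split.
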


As $d$ grows, the quantity $s^*$ tends to infinity and the
expression parenthesized in $E[X]$ tends to $(2-\mu_d)/\mu_d\approx
2.16\ldots$ This is an upper bound for the number of vertical strips
that are searched on average for $r=2$.
%
%
\section{On the probability distribution of the outputs}
\label{section: entropy}
This section is devoted to the analysis of the probability
distribution of the outputs of the SVS algorithm. For this purpose,
following \cite{BePa11} (see also \cite{BeLe12}), we use the concept
of Shannon entropy. For $F\in\mathcal{F}_{r,d}$, denote
$Z(F):=\{\bfs x\in\fq^r:F(\bfs x)=\bfs 0\}$ and $N(F):=|Z(F)|$. We
define a Shannon entropy $H_F$ associated with $F$ as
\begin{equation}\label{eq: definition entropy F}
H_F:=\sum_{\bfs x\in Z(F)}-P_{\bfs x,F} \log(P_{\bfs x,F}),
\end{equation}
where $P_{\bfs x,F}$ is the probability that the SVS algorithm
outputs $\bfs x$ on input $F$ and $\log$ denotes the natural
logarithm. It is well--known that $H_F\le\log N(F)$, and equality
holds if and only if $P_{\bfs x,F}=1/N(F)$ for every $\bfs x\in
Z(F)$. We shall consider the average entropy when $F$ runs through
all the elements of $\mathcal{F}_{r,d}$, namely
\begin{equation}\label{eq: definition entropy}
H:=\frac{1}{|\mathcal{F}_{r,d}|}\sum_{F\in \mathcal{F}_{r,d}}H_F.
\end{equation}

For an ``ideal'' algorithm for the search of $\fq$--rational zeros
of elements of $\mathcal{F}_{r,d}$, from the point of view of the
probability distribution of outputs, and $F\in\mathcal{F}_{r,d}$,
the probability $P_{\bfs x,F}^{\rm ideal}$ that a given $\bfs x\in
Z(F)$ occurs as output is equal to $1/N(F)$. As a consequence,
according to the definition \eqref{eq: definition entropy F}, the
corresponding entropy is
$$H^{\rm ideal}_F:=\sum_{\bfs x\in Z(F)}-P_{\bfs x,F}^{\rm ideal}
\log(P_{\bfs x,F}^{\rm ideal})= \sum_{\bfs x\in Z(F)}\frac{\log
N(F)}{N(F)}=\log N(F).$$
By the concavity of the function $x\mapsto \log x$ and \eqref{eq:
average number zeros}, we conclude that
\begin{equation}\label{eq: bound entropy ideal algorithm}
H^{\rm ideal}:=\frac{1}{|\mathcal{F}_{r,d}|}\sum_{F\in
\mathcal{F}_{r,d}} H^{\rm ideal}_F\le \log\left(\frac{\sum_{F\in
\mathcal{F}_{r,d}}N(F)}{|\mathcal{F}_{r,d}|}\right)=\log(q^{r-1}),
\end{equation}
where the last identity is due to \eqref{eq: average number zeros}.
In our analysis below, we shall exhibit a lower bound on the average
entropy $H$ which nearly matches this upper bound.
%
%
\subsection{On the number of vertical strips}
\label{subsec: average number vertical strips}
A critical point in the study of the behavior of $H$ is the analysis
of the probability distribution of the random variable
$NS:\mathcal{F}_{r,d}\to\Z_{\ge 0}$ which counts the number of
vertical strips with $\fq$--rational zeros of the elements of
$\mathcal{F}_{r,d}$.

Recall that $VS(F)$ denotes the set of vertical strips where each
$F\in\mathcal{F}_{r,d}$ has $\fq$--rational zeros and $NS(F)$ is its
cardinality, that is,
$$VS(F):=\{\bfs a\in\fq^{r-1}:(\exists\, x_r\in\fq)\ F(\bfs
a,x_r)=0\},\quad NS(F):=|VS(F)|.$$
We start considering the average number of vertical strips in
$\mathcal{F}_{r,d}$, namely
$$
NS({r,d}):=\frac{1}{|\mathcal{F}_{r,d}|}\sum_{F\in
\mathcal{F}_{r,d}}NS(F).
$$
According to \eqref{eq: prob C=1 in terms of number vert strips}, we
have
$NS({r,d})=q^{r-1}P[C=1]$. Therefore, as an immediate consequence of
Theorem \ref{th: prob C=1} and Corollary \ref{coro: prob C=1 -
asymptotic} we have the following result.

\begin{lemma}\label{lemma: average number vertical strips}
The number $NS({r,d})$ satisfies
\begin{align*}
NS({r,d})&=\sum_{k=1}^d(-1)^{k-1}\binom{q}{k}q^{r-1-k}
+(-1)^d\binom{q-1}{d}q^{r-d-2}\\
&=\mu_d\,q^{r-1}+\mathcal{O}(q^{r-2}).
\end{align*}
\end{lemma}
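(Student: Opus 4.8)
The plan is to observe that this lemma is essentially a repackaging of results already established, so no new combinatorial work is required. The starting point is the identity \eqref{eq: prob C=1 in terms of number vert strips}, namely $P_1[C_1=1]=\frac{1}{q^{r-1}|\mathcal{F}_{r,d}|}\sum_{F\in\mathcal{F}_{r,d}}NS(F)$, which rearranges to $NS(r,d)=\frac{1}{|\mathcal{F}_{r,d}|}\sum_{F\in\mathcal{F}_{r,d}}NS(F)=q^{r-1}P_1[C_1=1]$. Equivalently, one may write $NS(r,d)=q^{r-1}P[C=1]$ and use the consistency $P[C=1]=P_1[C_1=1]$ furnished by Lemma \ref{lemma: consistency conditions} together with the Kolmogorov-extension remark; either route lands at the same place.

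First I would substitute the exact formula for $P_1[C_1=1]$ supplied by Theorem \ref{th: prob C=1}, valid for $q>d$: $P_1[C_1=1]=\sum_{j=1}^d(-1)^{j-1}\binom{q}{j}q^{-j}+(-1)^d\binom{q-1}{d}q^{-d-1}$. Multiplying both sides by $q^{r-1}$ and relabelling the summation index $j$ as $k$ yields at once the first displayed equality of the lemma, $NS(r,d)=\sum_{k=1}^d(-1)^{k-1}\binom{q}{k}q^{r-1-k}+(-1)^d\binom{q-1}{d}q^{r-d-2}$.

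For the asymptotic second line I would invoke Corollary \ref{coro: prob C=1 - asymptotic}, which states $|P_1[C_1=1]-\mu_d|\le 2/q$ for $q>d$. Multiplying this inequality through by $q^{r-1}$ gives $|NS(r,d)-\mu_d\,q^{r-1}|\le 2q^{r-2}$, which is exactly the claimed estimate $NS(r,d)=\mu_d\,q^{r-1}+\mathcal{O}(q^{r-2})$, now even with an explicit $\mathcal{O}$--constant.

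Since every ingredient has already been done — the inclusion--exclusion computation over subsets of $\fq$ behind Theorem \ref{th: prob C=1}, and the Stirling-number bookkeeping behind Corollary \ref{coro: prob C=1 - asymptotic} — there is no genuine obstacle here. The only points requiring mild care are tracking the exponent of $q$ correctly when passing from the probability $P_1[C_1=1]$ to the count $NS(r,d)$, and noting that the hypothesis $q>d$ is simply inherited from the two results being quoted.
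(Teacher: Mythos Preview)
Your proposal is correct and matches the paper's approach exactly: the paper also derives the lemma as an immediate consequence of the identity $NS(r,d)=q^{r-1}P[C=1]$ from \eqref{eq: prob C=1 in terms of number vert strips} together with Theorem \ref{th: prob C=1} and Corollary \ref{coro: prob C=1 - asymptotic}. Your write-up simply spells out the multiplication by $q^{r-1}$ that the paper leaves implicit.
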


Next we determine the variance $NS_2({r,d})$ of the random variable
$NS(\cdot)$, that is,
$$NS_2({r,d}):=\frac{1}{|\mathcal{F}_{r,d}|}\sum_{F\in
\mathcal{F}_{r,d}}\big(NS(F)-NS({r,d})\big)^2=\frac{1}{|\mathcal{F}_{r,d}|}
\sum_{F\in \mathcal{F}_{r,d}}NS(F)^2- NS({r,d})^2.$$
\begin{proposition}\label{prop: variance number of vert strips}
The variance $NS_2({r,d})$ satisfies
$$
NS_2({r,d})=
\frac{1}{(d!)^2}q^{2r-3}+\mu_d(1-\mu_d)\,q^{r-1}+\mathcal{O}(q^{2r-4}).$$
\end{proposition}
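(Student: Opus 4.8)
The plan is to compute the second moment $\frac{1}{|\mathcal{F}_{r,d}|}\sum_{F}NS(F)^2$ by the same inclusion--exclusion strategy already used for the first moment in Theorem~\ref{th: prob C=1}, and then subtract $NS(r,d)^2$, whose asymptotic expansion is furnished by Lemma~\ref{lemma: average number vertical strips}. Writing $NS(F)^2$ as a double sum over pairs of vertical strips $(\bfs a,\bfs b)\in\fq^{r-1}\times\fq^{r-1}$, I would split according to whether $\bfs a=\bfs b$ or $\bfs a\neq\bfs b$. The diagonal contributes exactly $\sum_F NS(F)=|\mathcal{F}_{r,d}|\,NS(r,d)=\mu_d q^{r-1}+\mathcal{O}(q^{r-2})$ times $|\mathcal{F}_{r,d}|$, which is $\mathcal{O}(q^{r-1})$ after normalization and hence absorbed into the error term. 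The off-diagonal part is the heart of the matter: for fixed $\bfs a\neq\bfs b$ one must count, on average over $F$, the number of pairs of vertical strips in $VS(F)$ lying over $\bfs a$ and $\bfs b$. This is exactly the quantity $N_{\underline{\bfs a},2}/|\mathcal{F}_{r,d}|$ computed in the Claim inside the proof of Proposition~\ref{prop: analysis c=2 fixed}, namely $(P_1[C_1=1])^2+\frac{q-1}{q^{2d+2}}\binom{q-1}{d}^2$, which is independent of the choice of $\bfs a\neq\bfs b$.

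Putting these pieces together, I would obtain
$$\frac{1}{|\mathcal{F}_{r,d}|}\sum_{F}NS(F)^2 = q^{r-1}(q^{r-1}-1)\Big((P_1[C_1=1])^2+\tfrac{q-1}{q^{2d+2}}\tbinom{q-1}{d}^2\Big)+\mathcal{O}(q^{r-1}).$$
Then $NS_2(r,d)$ is this expression minus $NS(r,d)^2 = q^{2r-2}(P_1[C_1=1])^2$ (using $NS(r,d)=q^{r-1}P_1[C_1=1]$ from the displayed identity $NS(r,d)=q^{r-1}P[C=1]$ together with $P_2[C_2=1]=P_1[C_1=1]$). The leading $q^{2r-2}(P_1[C_1=1])^2$ terms cancel, and what survives is $-q^{r-1}(P_1[C_1=1])^2 + q^{2r-2}\cdot\frac{q-1}{q^{2d+2}}\binom{q-1}{d}^2 + \mathcal{O}(q^{r-1})$, up to the $-q^{r-1}\cdot\frac{q-1}{q^{2d+2}}\binom{q-1}{d}^2$ cross term which is $\mathcal{O}(q^{r-2})$.

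For the final asymptotics I would use Corollary~\ref{coro: prob C=1 - asymptotic}, which gives $P_1[C_1=1]=\mu_d+\mathcal{O}(q^{-1})$, so $-q^{r-1}(P_1[C_1=1])^2 = -\mu_d^2 q^{r-1}+\mathcal{O}(q^{r-2})$; and the elementary estimate $\binom{q-1}{d}=\frac{q^d}{d!}+\mathcal{O}(q^{d-1})$, which yields $q^{2r-2}\cdot\frac{q-1}{q^{2d+2}}\binom{q-1}{d}^2 = \frac{1}{(d!)^2}q^{2r-3}+\mathcal{O}(q^{2r-4})$. Combining with the second-moment contribution, the terms of order $q^{r-1}$ assemble to $(\mu_d - \mu_d^2)q^{r-1}=\mu_d(1-\mu_d)q^{r-1}$: indeed the $q^{2r-2}$-coefficient from $q^{r-1}(q^{r-1}-1)(P_1[C_1=1])^2$ against $NS(r,d)^2$ leaves a residual $-q^{r-1}(P_1[C_1=1])^2+\mathcal{O}(q^{r-1})$ from the $-q^{r-1}$ factor, but the ``missing'' $+\mu_d q^{r-1}$ arises precisely from the diagonal term $\sum_F NS(F)$ that I deferred above. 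Tracking that diagonal contribution carefully is the one place where a sign or a lower-order term is easy to mishandle, so that bookkeeping — reconciling the diagonal term, the off-diagonal $(q^{r-1})(q^{r-1}-1)$ count, and the subtracted $NS(r,d)^2$ — is the main obstacle; everything else is a mechanical expansion in powers of $q$.
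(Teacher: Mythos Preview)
Your approach is correct and essentially identical to the paper's: both split $\sum_F NS(F)^2$ into the diagonal sum (which equals $|\mathcal{F}_{r,d}|\,NS(r,d)$) and the off-diagonal sum over $\FF_2$, invoke the Claim inside Proposition~\ref{prop: analysis c=2 fixed} for the off-diagonal pairs to obtain $N_2\big((P_1[C_1=1])^2+\frac{q-1}{q^{2d+2}}\binom{q-1}{d}^2\big)$, and then subtract $NS(r,d)^2=q^{2r-2}(P_1[C_1=1])^2$ before expanding asymptotically. One small presentational point: your initial remark that the diagonal term is ``absorbed into the error term'' is misleading (for $r=2$ it is exactly the source of the $\mu_d q^{r-1}$ contribution), but you correctly retrieve it in your final paragraph, so no genuine gap remains.
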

\begin{proof}
Recall the notations $\FF_2:=(\fq^{r-1})^2\setminus \{(\bfs a,\bfs
a):\bfs a\in\fq^{r-1}\}$ and $N_2:=|\FF_2|$. Fix
$F\in\mathcal{F}_{r,d}$. We have
$$NS(F)^2=\bigg|\bigcup_{x,y\in\fq}\{(\bfs a_1,\bfs a_2)
\in(\fq^{r-1})^2:F(\bfs a_1,x) =F(\bfs a_2,y)=0\}\bigg|.$$
Then the inclusion--exclusion principle implies
\begin{align*}
\sum_{F\in \mathcal{F}_{r,d}}NS(F)^2 &=\sum_{F\in
\mathcal{F}_{r,d}}\sum_{j=1}^q\sum_{k=1}^q(-1)^{j+k}\sum_{\mathcal{X}_j\subset\fq}
\sum_{\mathcal{Y}_k\subset\fq}
\mathcal{S}(\mathcal{X}_j,\mathcal{Y}_k)\\
&=\sum_{j=1}^q\sum_{k=1}^q(-1)^{j+k} \sum_{\mathcal{X}_j\subset\fq}
\sum_{\mathcal{Y}_k\subset\fq} \sum_{F\in
\mathcal{F}_{r,d}}\mathcal{S}(\mathcal{X}_j,\mathcal{Y}_k),
\end{align*}
where $\mathcal{X}_j$ and $\mathcal{Y}_k$ run through all the
subsets of $\fq$ of cardinality $j$ and $k$, respectively, and, for
arbitrary subsets $\mathcal{X}\subset\fq$ and
$\mathcal{Y}\subset\fq$,
$$\mathcal{S}(\mathcal{X},\mathcal{Y}):=\big|\{(\bfs
a_1,\bfs a_2)\in(\fq^{r-1})^2:(\forall x\in\mathcal{X})(\forall
x\in\mathcal{Y})\,F(\bfs a_1,x)=0, F(\bfs a_2,y)=0\}\big|.$$

For $\underline{\bfs a}:=(\bfs a_1,\bfs a_2)\in(\fq^{r-1})^2$ and
subsets $\mathcal{X}\subset\fq$ and $\mathcal{Y}\subset\fq$, denote
$$\mathcal{S}_{\underline{\bfs a}}(\mathcal{X},\mathcal{Y})
:=\{F\in\mathcal{F}_{r,d}:(\forall x\in\mathcal{X})(\forall
x\in\mathcal{Y})\,F(\bfs a_1,x)=0, F(\bfs a_2,y)=0\}.$$
It follows that
\begin{align*}
\sum_{F\in \mathcal{F}_{r,d}}NS(F)^2
&=\sum_{j=1}^q\sum_{k=1}^q(-1)^{j+k}\sum_{\mathcal{X}_j\subset\fq}
\sum_{\mathcal{Y}_k\subset\fq}\sum_{\underline{\bfs
a}\in(\fq^{r-1})^2}|\mathcal{S}_{\underline{\bfs
a}}(\mathcal{X}_j,\mathcal{Y}_k)|\\
&=\sum_{\underline{\bfs a}\in(\fq^{r-1})^2}
\sum_{j=1}^q\sum_{k=1}^q(-1)^{j+k}\sum_{\mathcal{X}_j\subset\fq}
\sum_{\mathcal{Y}_k\subset\fq}|\mathcal{S}_{\underline{\bfs
a}}(\mathcal{X}_j,\mathcal{Y}_k)|=:\sum_{\underline{\bfs
a}\in(\fq^{r-1})^2}N_{\underline{\bfs a},2},
\end{align*}
where $N_{\underline{\bfs a},2}$ is defined as in \eqref{eq:
definition N_{a,2}}. If $\underline{\bfs a}\in\FF_2$, then the claim
in the proof of Proposition \ref{prop: analysis c=2 fixed} asserts
that
$$\frac{N_{\underline{\bfs a},2}}{|\mathcal{F}_{r,d}|}
=\big(P[C=1]\big)^2+\frac{q-1}{q^{2d+2}}\binom{q-1}{d}^2.$$
On the other hand, for $(\bfs a,\bfs
a)\in(\fq^{r-1})^2\setminus\FF_2$, by elementary calculations we see
that
\begin{align*}
N_{(\bfs a,\bfs a),2}&:=
\sum_{j=1}^q\sum_{k=1}^q(-1)^{j+k}\sum_{\mathcal{X}_j\subset\fq}
\sum_{\mathcal{Y}_k\subset\fq}|\mathcal{S}_{(\bfs a,\bfs
a)}(\mathcal{X}_j,\mathcal{Y}_k)|=
\sum_{j=1}^q(-1)^{j-1}\sum_{\mathcal{X}_j\subset\fq}
|\mathcal{S}_{\bfs a}(\mathcal{X}_j)|,
\end{align*}
where $\mathcal{S}_{\bfs a}(\mathcal{Z}):=
\{F\in\mathcal{F}_{r,d}:(\forall z\in\mathcal{Z})\,F(\bfs a,z)=0\}$
for any subset $\mathcal{Z}\subset\fq$. Thus,
\begin{align*}
\frac{1}{|\mathcal{F}_{r,d}|} \sum_{F\in \mathcal{F}_{r,d}}NS(F)^2
&=\sum_{\underline{\bfs a}\in\FF_2}\frac{N_{\underline{\bfs
a},2}}{|\mathcal{F}_{r,d}|}+ \frac{1}{|\mathcal{F}_{r,d}|}
\sum_{\bfs a\in \fq^{r-1}}
\sum_{j=1}^q(-1)^{j-1}\sum_{\mathcal{X}_j\subset\fq}
|\mathcal{S}_{\bfs a}(\mathcal{X}_j)|\\ &=
N_2\bigg(\big(q^{1-r}NS(r,d)\big)^2+\frac{q-1}{q^{2d+2}}\binom{q-1}{d}^2\bigg)+
\sum_{F\in \mathcal{F}_{r,d}} \frac{NS(F)}{|\mathcal{F}_{r,d}|}.
\end{align*}
The statement of the proposition follows easily from Lemma
\ref{lemma: average number vertical strips}.
\end{proof}

By the Chebyshev inequality we obtain a lower bound on the number of
$F\in \mathcal{F}_{r,d}$ for which $NS(F)$ differs a certain
proportion from the expected value $NS(r,d)$.
\begin{corollary}\label{coro: NS(F) deviating from NS(d,r)}
For $0<\alpha<1$, the number $A(\alpha)$ of $F\in \mathcal{F}_{r,d}$
for which $NS(F)\le (1-\alpha)NS(r,d)$ is bounded as
$$A(\alpha)\le \frac{1}{(\alpha\,\mu_d\,d!)^2}q^{\dim\mathcal{F}_{r,d}-1}+
\frac{1}{\alpha^2}\,\frac{1-\mu_d}{\mu_d}
q^{\dim\mathcal{F}_{r,d}-r+1}
+\mathcal{O}(q^{\dim\mathcal{F}_{r,d}-2}).$$
\end{corollary}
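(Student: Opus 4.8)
The plan is to apply the one-sided Chebyshev (Markov-type) inequality to the random variable $NS(\cdot)$ on the probability space $\mathcal{F}_{r,d}$ with the uniform measure, using the mean estimate from Lemma \ref{lemma: average number vertical strips} and the variance estimate from Proposition \ref{prop: variance number of vert strips}. Writing $\mu:=NS(r,d)$ and $\sigma^2:=NS_2(r,d)$, the event $\{NS(F)\le(1-\alpha)\mu\}$ is contained in the event $\{|NS(F)-\mu|\ge \alpha\mu\}$, so Chebyshev gives
\begin{equation*}
\frac{A(\alpha)}{|\mathcal{F}_{r,d}|}=P[NS\le(1-\alpha)\mu]\le P[|NS-\mu|\ge\alpha\mu]\le\frac{\sigma^2}{\alpha^2\mu^2}.
\end{equation*}
It then remains to insert the asymptotic expressions $\mu=\mu_d\,q^{r-1}+\mathcal{O}(q^{r-2})$ and $\sigma^2=\frac{1}{(d!)^2}q^{2r-3}+\mu_d(1-\mu_d)q^{r-1}+\mathcal{O}(q^{2r-4})$ and simplify.

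First I would record that $|\mathcal{F}_{r,d}|=q^{\dim\mathcal{F}_{r,d}}$, so that $A(\alpha)\le \dfrac{\sigma^2}{\alpha^2\mu^2}\,q^{\dim\mathcal{F}_{r,d}}$. Next I would expand $\mu^{-2}$: since $\mu=\mu_d q^{r-1}(1+\mathcal{O}(q^{-1}))$, we get $\mu^{-2}=\mu_d^{-2}q^{-2(r-1)}(1+\mathcal{O}(q^{-1}))$. Multiplying by $\sigma^2$, the leading term $\frac{1}{(d!)^2}q^{2r-3}$ contributes $\frac{1}{(d!)^2\mu_d^2}q^{2r-3-2(r-1)}=\frac{1}{(d!)^2\mu_d^2}q^{-1}$, the term $\mu_d(1-\mu_d)q^{r-1}$ contributes $\frac{1-\mu_d}{\mu_d}q^{r-1-2(r-1)}=\frac{1-\mu_d}{\mu_d}q^{-(r-1)}=\frac{1-\mu_d}{\mu_d}q^{1-r}$, and the error terms $\mathcal{O}(q^{2r-4})$ contribute $\mathcal{O}(q^{-2})$. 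Reattaching the factor $q^{\dim\mathcal{F}_{r,d}}/\alpha^2$ yields exactly
\begin{equation*}
A(\alpha)\le\frac{1}{(\alpha\,\mu_d\,d!)^2}q^{\dim\mathcal{F}_{r,d}-1}+\frac{1}{\alpha^2}\,\frac{1-\mu_d}{\mu_d}\,q^{\dim\mathcal{F}_{r,d}-r+1}+\mathcal{O}(q^{\dim\mathcal{F}_{r,d}-2}),
\end{equation*}
which is the claimed bound. One should keep track of the cross terms coming from the $(1+\mathcal{O}(q^{-1}))$ factor in $\mu^{-2}$ multiplying the two main terms of $\sigma^2$: these produce $\mathcal{O}(q^{\dim\mathcal{F}_{r,d}-2})$ and $\mathcal{O}(q^{\dim\mathcal{F}_{r,d}-r})$ contributions respectively, both absorbed into the stated error term since $r\ge 2$.

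There is no real obstacle here; the proof is a routine bookkeeping exercise once Lemma \ref{lemma: average number vertical strips} and Proposition \ref{prop: variance number of vert strips} are in hand. The only point requiring a little care is the exponent accounting: one must verify that every secondary term is genuinely $\mathcal{O}(q^{\dim\mathcal{F}_{r,d}-2})$, which uses $r\ge 2$ so that $q^{\dim\mathcal{F}_{r,d}-r}\le q^{\dim\mathcal{F}_{r,d}-2}$ and hence does not spoil the claimed error order. I would also note that the one-sided inequality $P[NS\le(1-\alpha)\mu]\le P[|NS-\mu|\ge\alpha\mu]$ suffices — no need for the sharper Cantelli inequality — since the stated bound already includes the full two-sided Chebyshev estimate.
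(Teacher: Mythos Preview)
Your proof is correct and follows exactly the same approach as the paper: apply Chebyshev's inequality with the mean and variance estimates from Lemma \ref{lemma: average number vertical strips} and Proposition \ref{prop: variance number of vert strips}, then expand $\sigma^2/(\alpha^2\mu^2)$ asymptotically. The paper's proof is terser, merely stating the Chebyshev bound and the resulting asymptotic expansion without the intermediate bookkeeping you spell out, but the argument is identical.
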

\begin{proof}
By Lemma \ref{lemma: average number vertical strips} and Proposition
\ref{prop: variance number of vert strips}, the Chebyshev inequality
implies
$$p_{r,d}\left(|NS(F)-NS(r,d)|\ge \alpha NS(r,d)\right)\le
\frac{NS_2(r,d)}{\alpha^2NS(r,d)^2}.$$
Taking into account that
$$\frac{NS_2(r,d)}{\alpha^2NS(r,d)^2}=
\frac{1}{(\alpha\,\mu_d\,d!)^2}q^{-1}+\frac{1-\mu_d}{\alpha^2\mu_d}q^{1-r}+\mathcal{O}(q^{-2}),$$
the corollary readily follows.
\end{proof}
%
%
\subsection{A lower bound for the entropy}
\label{subsec: bound entropy with replacement}
In order to analyze the Shannon entropy \eqref{eq: definition
entropy}, it is necessary to determine the probability $P_{\bfs
x,F}$ that an element $\bfs x:=(\bfs a,x)\in\fq^{r}$ occurs as
output on input $F\in\mathcal{F}_{r,d}$.

Given an input polynomial $F\in\mathcal{F}_{r,d}$, and the vertical
strip defined by an element $\bfs a\in\fq^{r-1}$, the SVS algorithm
proceeds to search for $\fq$--rational zeros of the univariate
polynomial $f:=\gcd\big(F(\bfs a,T),T^q-T)$. If this search is done
using the randomized algorithm of Cantor and Zassenhaus (see
\cite{CaZa81}), then all the $\fq^\times$--rational zeros of $f$ are
equiprobable (see, e.g., \cite[Section 14.3]{GaGe99}). 
The algorithm can be easily modified so that all $\fq$--rational
zeros of $f$ are equiprobable. In the sequel we shall assume that
the search of roots in $\fq$ of elements of $\mathcal{F}_{1,d}$ is
performed using a randomized algorithm for which all outputs are
equiprobable.

For the analysis of the distribution of outputs, we denote as before
by $\Omega_d:=\fq^{r_d}$ the set of all possible random choices of
elements of $\fq$ made by the routine for univariate root finding.
We consider $\Omega_d$ to be endowed with the uniform probability,
$\FF\times\mathcal{F}_{r,d}$ with the probability measure $P$ of
Section \ref{section: analysis of SVS algorithm}, and
$\FF\times\mathcal{F}_{r,d}\times \Omega_d$ with the product
probability $P\times P_{\Omega_d}$. Finally, we shall consider
probabilities related to the random variable
$C_{\mathrm{out}}:\FF\times\mathcal{F}_{r,d}\times
\Omega_d\to\fq^{r}\cup\{\emptyset\}$ defined in the following way:
for a triple $(\underline{\bfs a},F,\gamma)\in
\FF\times\mathcal{F}_{r,d}\times \Omega_d$, if $F$ has an
$\fq$--rational zero on any of the vertical strips defined by
$\underline{\bfs a}$, and $\bfs a_j$ is the first vertical strip
with this property, then $C_{\mathrm{out}}(\underline{\bfs
a},F,\gamma):=(\bfs a_j,x)$, where $x\in\fq$ is the zero of $F(\bfs
a_j,T)$ computed by the root--finding routine determined by the
random choice $\gamma$. Otherwise, we define
$C_{\mathrm{out}}(\underline{\bfs a},F,\gamma):=\emptyset$. In these
terms, the probability $P_{\bfs x,F}$ that an element $\bfs x:=(\bfs
a,x)\in\fq^{r}$ occurs as output on input $F\in\mathcal{F}_{r,d}$
may be expressed as the conditional probability $P\times
P_{\Omega_d} \big[C_{\mathrm{out}}=\bfs x|F\big]$, namely
$$P_{\bfs x,F}=P\times P_{\Omega_d}
\big[C_{\mathrm{out}}=\bfs x|F\big]:= \frac{P\times
P_{\Omega_d}\big[\{C_{\mathrm{out}}=\bfs x\}\cap
(\FF\times\{F\}\times \Omega_d)\big]}{ P\times
P_{\Omega_d}\big[\FF\times\{F\}\times \Omega_d\big]}.$$

Now we are ready to determine $P_{\bfs x,F}$. For this purpose, we
denote by $N_{\bfs a}(F)$ the number of $\fq$--rational zeros of $F$
in the vertical strip defined by $\bfs a$, i.e.,
$$N_{\bfs a}(F):=|\{x\in\fq:F(\bfs a,x)=0\}|.$$
We have the following result.
\begin{lemma}\label{lemma: prob (a,x) output nind model}
Let $F\in\mathcal{F}_{r,d}$ and $\bfs x:=(\bfs a,x)\in Z(F)$. Then
$$P_{\bfs x,F}=\frac{1}{NS(F)\,N_{\bfs a}(F)}.$$
\end{lemma}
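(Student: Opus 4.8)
Proof plan for Lemma 5.8 (the statement $P_{\bfs x,F}=\frac{1}{NS(F)\,N_{\bfs a}(F)}$).

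The plan is to compute the conditional probability $P_{\bfs x,F}=P\times P_{\Omega_d}[C_{\mathrm{out}}=\bfs x\mid F]$ by decomposing the event $\{C_{\mathrm{out}}=(\bfs a,x)\}\cap(\FF\times\{F\}\times\Omega_d)$ according to which vertical strip the algorithm first succeeds on. Since $\bfs x=(\bfs a,x)\in Z(F)$, the strip labelled by $\bfs a$ does contain an $\fq$–rational zero of $F$, so the only way the algorithm can output a point whose first $r-1$ coordinates equal $\bfs a$ is if $\bfs a$ is the \emph{first} strip in the sequence $\underline{\bfs a}$ at which $F$ has an $\fq$–rational zero, and then the root–finding routine returns $x$ among the $N_{\bfs a}(F)$ roots of $F(\bfs a,T)$. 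Thus I would write
\[
\{C_{\mathrm{out}}=\bfs x\}\cap(\FF\times\{F\}\times\Omega_d)
=\{(\underline{\bfs a},F,\gamma):\bfs a\in\underline{\bfs a},\ \bfs a\text{ is the first strip of }\underline{\bfs a}\text{ in }VS(F),\ \text{routine outputs }x\}.
\]

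First I would handle the $\Omega_d$–factor: conditioning on a fixed $\underline{\bfs a}$ for which $\bfs a$ is the first strip in $VS(F)$, the root–finding routine is run on $F(\bfs a,T)$, and by the assumption stated just before the lemma (all $\fq$–rational zeros of the relevant univariate polynomial are equiprobable outputs) the probability over $\gamma\in\Omega_d$ that the output is the specific root $x$ is exactly $1/N_{\bfs a}(F)$. So the product probability of the event equals $\frac{1}{N_{\bfs a}(F)}$ times the probability over $\underline{\bfs a}\in\FF$ that $\bfs a$ appears in $\underline{\bfs a}$ as the first element lying in $VS(F)$.

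Next I would compute this last probability. By symmetry of the uniform measure $P$ on $\FF=\FF_{q^{r-1}}$ (a uniformly random ordering of all of $\fq^{r-1}$), the event ``$\bfs a$ is the first element of the random permutation that lies in the subset $VS(F)$'' has probability $1/|VS(F)|=1/NS(F)$: among the $NS(F)$ elements of $VS(F)$, each is equally likely to be the one that appears earliest in a uniformly random ordering of $\fq^{r-1}$. (Formally this can be justified by the consistency Lemma~\ref{lemma: consistency conditions}, or by a direct counting of permutations, but the one–line symmetry argument is cleanest.) Multiplying, $P_{\bfs x,F}=\frac{1}{NS(F)}\cdot\frac{1}{N_{\bfs a}(F)}$, which is the claim. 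I expect the only point requiring care is the bookkeeping that ties the event $\{C_{\mathrm{out}}=\bfs x\}$ precisely to ``$\bfs a$ earliest in $VS(F)$ \emph{and} routine returns $x$''—in particular noting that strips not in $VS(F)$ contribute nothing and that the normalizing denominator $P\times P_{\Omega_d}[\FF\times\{F\}\times\Omega_d]$ is just $1/|\mathcal F_{r,d}|$, so it cancels correctly; the probabilistic content is entirely the two elementary symmetry computations above.
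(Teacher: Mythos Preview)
Your proposal is correct and takes a genuinely different, more elegant route than the paper. The paper argues by conditioning on the step $j$ at which $\bfs a$ is selected: it computes
\[
P\big(N_{\bfs a_1}(F)=0,\ldots,N_{\bfs a_{j-1}}(F)=0,\ \bfs a_j=\bfs a\mid F\big)
=\frac{1}{q^{r-1}}\frac{\binom{q^{r-1}-NS(F)}{j-1}}{\binom{q^{r-1}-1}{j-1}},
\]
sums over $j$, and then invokes the combinatorial identity $\sum_{j}\binom{q^{r-1}-NS(F)}{j}\big/\binom{q^{r-1}-1}{j}=q^{r-1}/NS(F)$ (cited from Graham--Knuth--Patashnik) to collapse the sum. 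You bypass this computation entirely with the symmetry observation that in a uniformly random ordering of $\fq^{r-1}$ each of the $NS(F)$ elements of $VS(F)$ is equally likely to appear first, giving $1/NS(F)$ directly. Your argument is shorter and avoids the external identity; the paper's approach has the minor advantage of making the step-by-step probabilities explicit, which could be useful if one later wanted finer information (e.g., the distribution of the stopping time conditional on output $\bfs x$), but for the lemma as stated your route is cleaner.
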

\begin{proof}
If $\bfs x$ occurs as output at the $j$th step, then the SVS
algorithm must have chosen elements $\bfs a_1\klk \bfs a_{j-1}$ for
the first $j-1$ searches such that $N_{\bfs a_k}(F)=0$ for $1\le
k\le j-1$, and the element $\bfs a$ for the $j$th search. Finally,
the routine for finding roots of $F(\bfs a,T)$ must output $x$,
which occurs with probability $1/N_{\bfs a}(F)$.

Recall that the element $\bfs a_j\in\fq^{r-1}$ for the $j$th search
is randomly chosen among the elements of $\fq^{r-1}\setminus\{\bfs
a_1\klk \bfs a_{j-1}\}$ with equiprobability. Therefore, if $\bfs a$
arises as the choice for the $j$th step, then the SVS algorithm must
have chosen pairwise--distinct elements $\bfs a_1\klk \bfs
a_{j-1}\in\fq^{r-1}\setminus NS(F)$ for the first $j-1$ searches.
The probability of these choices is
\begin{align*}
P(N_{\bfs a_1}(F)=0\klk N_{\bfs a_{j-1}}(F)=0,\bfs a_j=\bfs a|F)& =
\prod_{k=0}^{j-2}\bigg(1-\frac{NS(F)}{q^{r-1}-k}\bigg)
\cdot\frac{1}{q^{r-1}-j+1}\\&=\frac{1}{q^{r-1}}
\frac{\binom{q^{r-1}-NS(F)}{j-1}}{\binom{q^{r-1}-1}{j-1}}.
\end{align*}
As there are $q^{r-1}-NS(F)$ elements $\bfs b\in\fq^{r-1}$ with
$N_{\bfs b}(F)=0$, the algorithm performs at most $q^{r-1}-NS(F)+1$
searches. Finally, when $\bfs a$ is chosen, the probability to find
$x$ as the $\fq$--rational zero of $F(\bfs a,T)$ is equal to
$1/N_{\bfs a}(F)$. It follows that
\begin{align*}
P_{\bfs x,F}&= \sum_{j=1}^{q^{r-1}-NS(F)+1} P(N_{\bfs a_1}(F)=0\klk
N_{\bfs a_{j-1}}(F)=0,\bfs a_j=\bfs a|F)\cdot
\frac{1}{N_{\bfs a}(F)}\\
&=\frac{1}{q^{r-1}N_{\bfs a}(F)}\sum_{j=0}^{q^{r-1}-NS(F)}
\frac{\binom{q^{r-1}-NS(F)}{j}}{\binom{q^{r-1}-1}{j}}.
\end{align*}

According to, e.g., \cite[\S 5.2, Problem 1]{GrKnPa94},
$$\sum_{j=0}^{q^{r-1}-NS(F)}
\frac{\binom{q^{r-1}-NS(F)}{j}}{\binom{q^{r-1}-1}{j}}
=\frac{q^{r-1}}{NS(F)}.$$
We conclude that
$$
P_{\bfs x,F}=\frac{1}{q^{r-1}N_{\bfs a}(F)}\, \frac{q^{r-1}}{NS(F)}
= \frac{1}{NS(F)\,N_{\bfs a}(F)}.
$$
This completes the proof of the lemma.
\end{proof}

For any $F\in\mathcal{F}_{r,d}$, consider the entropy
\begin{equation}\label{eq: entropy random}
H_F=\sum_{(\bfs a,x)\in Z(F)} \frac{\log\big(NS(F)\,N_{\bfs
a}(F)\big)}{NS(F)\,N_{\bfs a}(F)}.
\end{equation}
We aim to determine the asymptotic behavior of the average entropy
$$H:=\frac{1}{|\mathcal{F}_{r,d}|}\sum_{F\in
\mathcal{F}_{r,d}}H_F= \frac{1}{|\mathcal{F}_{r,d}|}\sum_{F\in
\mathcal{F}_{r,d}}\sum_{(\bfs a,x)\in Z(F)}
\frac{\log\big(NS(F)\,N_{\bfs a}(F)\big)}{NS(F)\,N_{\bfs a}(F)}.$$

Observe that
\begin{equation}\label{eq: number of terms entropy}
\sum_{F\in \mathcal{F}_{r,d}}\sum_{(\bfs a,x_r)\in Z(F)}1=
\sum_{(\bfs a,x)\in\fq^r}|\{F\in \mathcal{F}_{r,d}:F(\bfs a,x)=0\}|=
q^{\dim\mathcal{F}_{r,d}+r-1}
\end{equation}
Further, the function $h:(0,+\infty)\to\R$, $h(x):=\log x/x$ is
increasing in the interval $[e,+\infty)$ and convex in the interval
$[e^{3/2},+\infty)$. By Corollary \ref{coro: NS(F) deviating from
NS(d,r)}, the probability of the set of $F\in\mathcal{F}_{r,d}$
having up to $e^{3/2}=4.48\dots$ vertical strips is
$\mathcal{O}(q^{-1})$. Therefore,
\begin{align}
H&= \frac{\ds\sum_{F\in \mathcal{F}_{r,d}}\sum_{(\bfs a,x)\in
Z(F)}1}{|\mathcal{F}_{r,d}|}\,\frac{{\ds\sum_{F\in
\mathcal{F}_{r,d}}\sum_{(\bfs a,x)\in Z(F)}} \frac{
\log(NS(F)\,N_{\bfs a}(F))}{NS(F)\,N_{\bfs a}(F)}}{ {\ds\sum_{F\in
\mathcal{F}_{r,d}}\sum_{(\bfs a,x)\in Z(F)}}1}
\nonumber\\
&\ge q^{r-1}\,h\left(\frac{{\ds\sum_{F\in
\mathcal{F}_{r,d}}\sum_{(\bfs a,x)\in Z(F)} }NS(F)\,N_{\bfs
a}(F)}{{\ds\sum_{F\in \mathcal{F}_{r,d}}\sum_{(\bfs a,x)\in
Z(F)}}1}\right)(1+\mathcal{O}(q^{-1})).\label{eq: ineq entropy
convex function}
\end{align}

Next we analyze the numerator
$$\mathcal{N}:=\sum_{F\in
\mathcal{F}_{r,d}}\sum_{(\bfs a,x)\in Z(F)} NS(F)\,N_{\bfs a}(F)$$
in the argument of $h$ in the last expression.
\begin{lemma}\label{lemma: sum for entropy}
We have $\mathcal{N}=2\,\mu_d\,q^{2r-2+
\dim\mathcal{F}_{r,d}}(1+\mathcal{O}(q^{-1}))$.
\end{lemma}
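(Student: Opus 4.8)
The plan is to expand $\mathcal{N}$ by interchanging the order of summation so that the innermost sum runs over $F\in\mathcal{F}_{r,d}$, then replace the two combinatorial weights $NS(F)$ and $N_{\bfs a}(F)$ by inclusion--exclusion expressions over subsets of $\fq$, and finally count, for each fixed choice of points, the number of $F$ satisfying the resulting linear conditions. Concretely, write
$$
\mathcal{N}=\sum_{(\bfs a,x)\in\fq^r}\ \sum_{\stackrel{\scriptstyle F\in\mathcal{F}_{r,d}}{F(\bfs a,x)=0}} NS(F)\,N_{\bfs a}(F).
$$
For fixed $(\bfs a,x)$, apply the inclusion--exclusion identity for $NS(F)=\big|\bigcup_{y\in\fq}\{\bfs b:F(\bfs b,y)=0\}\big|$ (exactly as in the proof of Theorem \ref{th: prob C=1}) and the analogous identity $N_{\bfs a}(F)=\big|\bigcup_{z\in\fq}\{\ast : F(\bfs a,z)=0\}\big|$ rewritten via inclusion--exclusion over subsets of the line $\{\bfs a\}\times\fq$. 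This turns $\mathcal{N}$ into a triple alternating sum over a subset $\mathcal{X}_j\subset\fq$ (governing the vertical strip $\bfs b$), a subset $\mathcal{Z}_k\subset\fq$ on the fixed strip $\bfs a$ (which must contain $x$, so effectively over $\binom{q-1}{k-1}$ choices together with the forced point $x$), and a further point $\bfs b\in\fq^{r-1}$, with the inner quantity being $|\{F\in\mathcal{F}_{r,d}: F(\bfs a,z)=0\ \forall z\in\mathcal{Z}_k,\ F(\bfs b,y)=0\ \forall y\in\mathcal{X}_j\}|$.

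The main computational step is then to count these $F$: as in Propositions \ref{prop: analysis c=2 fixed} and \ref{prop: variance number of vert strips}, the conditions on the strip $\bfs a$ and on the strip $\bfs b$ are linearly independent when $\bfs b\neq\bfs a$ (giving $q^{\dim\mathcal{F}_{r,d}-(\min\{j,d+1\}\text{-type count})-(\min\{k,d+1\}\text{-type count})}$, with the usual saturation at $d+1$ conditions once a full line is forced), while the degenerate case $\bfs b=\bfs a$ contributes a lower-order term because the two sets of conditions overlap. Summing the geometric/alternating series over $j$ and $k$ using the identity \eqref{eq: identities combinatorial numbers} collapses each of the two inclusion--exclusion sums into a factor asymptotic to $\mu_d\,q$ (this is precisely the computation already done for $P_1[C_1=1]$ and for $N_{\underline{\bfs a},2}/|\mathcal{F}_{r,d}|$), and the outer sum over $(\bfs a,x)\in\fq^r$ and over $\bfs b\in\fq^{r-1}$ contributes $q^{r}\cdot q^{r-1}$. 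Tracking the leading term, one factor $\mu_d\,q$ comes from the strip-$\bfs b$ sum and another from the strip-$\bfs a$ sum; the factor $2$ arises exactly as in the claim inside Proposition \ref{prop: analysis c=2 fixed}, from combining the cross terms, or alternatively from the symmetry between "$\bfs x$ lies in the distinguished strip that is searched" and "$\bfs x$ lies in another strip"—i.e. from $NS(F)N_{\bfs a}(F)$ being a product whose expectation, after the dust settles, picks up the coefficient $2\mu_d$ rather than $\mu_d^2$. Care with the precise bookkeeping here is what yields the stated constant.

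The hardest part will be the error analysis: one must check that every degenerate configuration (the diagonal $\bfs b=\bfs a$; subsets $\mathcal{X}_j$ or $\mathcal{Z}_k$ of size exceeding $d$, which saturate the linear conditions; and the forcing $x\in\mathcal{Z}_k$) contributes only $\mathcal{O}(q^{2r-3+\dim\mathcal{F}_{r,d}})$, so that $\mathcal{N}=2\mu_d\,q^{2r-2+\dim\mathcal{F}_{r,d}}(1+\mathcal{O}(q^{-1}))$. Each such check is of the same flavour as the four-case analysis in the claim within Proposition \ref{prop: analysis c=2 fixed} and the treatment of the diagonal term in Proposition \ref{prop: variance number of vert strips}, and Corollary \ref{coro: NS(F) deviating from NS(d,r)} together with Lemma \ref{lemma: average number vertical strips} can be invoked where a probabilistic rather than purely combinatorial bound is cleaner. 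Once all lower-order contributions are absorbed into the $\mathcal{O}(q^{-1})$ relative error, the lemma follows. $\qed$
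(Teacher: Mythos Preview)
Your overall strategy---interchange the order of summation, expand $NS(F)$ by inclusion--exclusion, and count the $F$ satisfying the resulting linear conditions, separating the diagonal $\bfs b=\bfs a$ from the generic case---is exactly the route the paper takes. However, two concrete points in your plan are off and, as written, would not produce the correct constant.

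First, there is no inclusion--exclusion for $N_{\bfs a}(F)$. The quantity $N_{\bfs a}(F)=|\{z\in\fq:F(\bfs a,z)=0\}|$ is a straight count, not the cardinality of a union of overlapping sets, so the expression $\big|\bigcup_{z}\{\ast:F(\bfs a,z)=0\}\big|$ with a subset $\mathcal{Z}_k\ni x$ is not meaningful. What the paper does instead is observe that
\[
\sum_{(\bfs a,x)\in Z(F)}N_{\bfs a}(F)=\sum_{\bfs a}\,N_{\bfs a}(F)^2=\sum_{\bfs a}\,\sum_{x,y\in\fq:\,F(\bfs a,x)=F(\bfs a,y)=0}1,
\]
introducing a second summation variable $y$ on the strip $\bfs a$. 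Only $NS(F)$ is expanded by inclusion--exclusion (over subsets $\mathcal{Z}_k\subset\fq$), and one then counts
\(
|\{F:F(\bfs a,x)=F(\bfs a,y)=0,\ F(\bfs b,T)|_{\mathcal{Z}_k}\equiv 0\}|
\)
for each $(\bfs a,x,y,\bfs b,\mathcal{Z}_k)$.

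Second, your account of the factor $2$ is wrong: there are \emph{not} two factors of $\mu_d$. The inclusion--exclusion over $\mathcal{Z}_k$ together with the $\bfs b$--sum produces a single factor $\mu_d\,q^{r-1}$ (exactly the computation behind $P_1[C_1=1]$). The factor $2$ comes entirely from the pair $(x,y)$ on the strip $\bfs a$: for $\bfs b\neq\bfs a$ the conditions $F(\bfs a,x)=F(\bfs a,y)=0$ impose two independent constraints when $x\neq y$ (weight $q^{-2}$, with $q(q-1)$ such pairs) and one constraint when $x=y$ (weight $q^{-1}$, with $q$ such pairs), giving a combined factor $(q-1)/q+1=(2q-1)/q$. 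This, and not any ``cross term'' analogy with Proposition~\ref{prop: analysis c=2 fixed}, is the source of the $2$. Once you replace your $\mathcal{Z}_k$--on--the--strip--$\bfs a$ device by the plain double sum over $x,y$, the rest of your outline (case split $\bfs b\neq\bfs a$ versus $\bfs b=\bfs a$, saturation at $d+1$ conditions for $k>d$, and the identity \eqref{eq: identities combinatorial numbers}) goes through as in the paper.
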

\begin{proof}
For $F\in\mathcal{F}_{r,d}$ and $\bfs a\in VS(F)$, we have
$$
NS(F)=\bigg|\bigcup_{x\in\fq}\{\bfs a\in\fq^{r-1}:F(\bfs
a,x)=0\}\bigg|,\ N_{\bfs a}(F)=\left|\{x\in\fq:F(\bfs
a,x)=0\}\right|.$$
As a consequence,
\begin{align*}
\mathcal{N}&=\sum_{F\in \mathcal{F}_{r,d}}
\sum_{\stackrel{\scriptstyle (\bfs a,x)\in\fq^r}{ {F(\bfs a,x)=0}}}
\sum_{\stackrel{\scriptstyle y\in\fq}{F(\bfs a,y)=0}}
\bigg|\bigcup_{z\in\fq}\{\bfs b\in\fq^{r-1}:F(\bfs b,z)=0\}\bigg|\\
&=\sum_{F\in \mathcal{F}_{r,d}}
\sum_{\stackrel{\scriptstyle (\bfs a,x)\in\fq^r}{ {F(\bfs a,x)=0}}}
\sum_{\stackrel{\scriptstyle y\in\fq}{F(\bfs a,y)=0}}
\sum_{k=1}^q(-1)^{k-1}\sum_{\stackrel{\scriptstyle\mathcal{Z}_k\subset\fq}{|\mathcal{Z}_k|=k}}
\big|\{\bfs b\in\fq^{r-1}:F(\bfs
b,T)|_{\mathcal{Z}_k}\equiv0\}\big|\\
&=\sum_{k=1}^q(-1)^{k-1}
\sum_{\bfs a\in\fq^{r-1}}
\sum_{x\in\fq} \sum_{y\in\fq}
\sum_{\stackrel{\scriptstyle\mathcal{Z}_k\subset\fq}{|\mathcal{Z}_k|=k}}
\mathcal{N}_{\bfs a,x,y,\mathcal{Z}_k},
\end{align*}
where
\begin{align*}
\mathcal{N}_{\bfs a,x,y,\mathcal{Z}_k}&:=
\sum_{\stackrel{\scriptstyle F\in \mathcal{F}_{r,d}}{F(\bfs
a,x)=F(\bfs a,y)=0}}\big|\{\bfs b\in\fq^{r-1}:F(\bfs
b,T)|_{\mathcal{Z}_k}\equiv0\}\big|\\
&=\sum_{\bfs b\in\fq^{r-1}}\big|\{F\in\mathcal{F}_{r,d}:F(\bfs
a,x)=0,F(\bfs a,y)=0,F(\bfs b,T)|_{\mathcal{Z}_k}\equiv0\}\big|.
\end{align*}

Suppose that $k\le d$. For $\bfs b\not=\bfs a$ and $x\not= y$, the
equalities $F(\bfs a,x)=0,F(\bfs a,y)=0,F(\bfs
b,T)|_{\mathcal{Z}_k}\equiv0$ are linearly--independent conditions
on the coefficients of $F$. If $\bfs b\not=\bfs a$ and $x=y$, then
we have $k+1$ linearly--independent conditions. Finally, for $\bfs
b=\bfs a$, the number of linearly--independent conditions depends on
the size of the intersection $\{x,y\}\cap\mathcal{Z}_k$. It follows
that
$$\mathcal{N}_{\bfs a,x,y,\mathcal{Z}_k}=(q^{r-1}-1)\,
q^{\dim\mathcal{F}_{r,d}-k-|\{x,y\}|}+
q^{\dim\mathcal{F}_{r,d}-\min\{d+1,|\{x,y\}\cup \mathcal{Z}_k|\}}.$$
Therefore, by elementary calculations we obtain
\begin{align*}
\sum_{x\in\fq} \sum_{y\in\fq}
\sum_{\stackrel{\scriptstyle\mathcal{Z}_k\subset\fq}{|\mathcal{Z}_k|=k}}
\mathcal{N}_{\bfs a,x,y,\mathcal{Z}_k}&= (q^{r-1}-1)\binom{q}{k}
q^{\dim\mathcal{F}_{r,d}-k}\bigg(\frac{q^2-q}{q^2}+\frac{q}{q}\bigg)(1+\mathcal{O}(q^{1-r}))
\\
&=\frac{2q-1}{q} (q^{r-1}-1)\binom{q}{k}
q^{\dim\mathcal{F}_{r,d}-k}(1+\mathcal{O}(q^{1-r})).
\end{align*}

Now assume that $k>d$. Then the condition $F(\bfs
b,T)|_{\mathcal{Z}_k}\equiv0$ is equivalent to $F(\bfs b,T)=0$.
Arguing as above, we deduce that
$$\sum_{x\in\fq} \sum_{y\in\fq}
\sum_{\stackrel{\scriptstyle\mathcal{Z}_k\subset\fq}{|\mathcal{Z}_k|=k}}
\mathcal{N}_{\bfs a,x,y,\mathcal{Z}_k}=\frac{2q-1}{q}
(q^{r-1}-1)\binom{q}{k}
q^{\dim\mathcal{F}_{r,d}-(d+1)}(1+\mathcal{O}(q^{1-r})).$$

Putting these equalities together and using \eqref{eq: identities
combinatorial numbers}, we obtain
\begin{align*}
\mathcal{N}=&2q^{2r-2+\dim\mathcal{F}_{r,d}}\frac{2q-1}{2q}(1-q^{1-r})\\
&\bigg(\sum_{k=1}^d(-1)^{k-1}\binom{q}{k}q^{-k}
+\sum_{k=d+1}^q(-1)^{k-1}\binom{q}{k}q^{-d-1}\bigg)(1+\mathcal{O}(q^{1-r}))\\
=&2\,\mu_d\,q^{2r-2+\dim\mathcal{F}_{r,d}}(1+\mathcal{O}(q^{-1})).
\end{align*}
This finishes the proof of the lemma.
\end{proof}

Combining \eqref{eq: ineq entropy convex function} with \eqref{eq:
number of terms entropy} and Lemma \ref{lemma: sum for entropy}, it
follows that
$$H\ge q^{r-1}h\left(\frac{2\,\mu_d\,q^{2r-2+\dim\mathcal{F}_{r,d}}
(1+\mathcal{O}(q^{-1}))}{
q^{r-1+\dim\mathcal{F}_{r,d}}}\right)(1+\mathcal{O}(q^{-1})).$$
In other words, we have the following result.
\begin{theorem}\label{th: entropy}
If $H$ denotes the average entropy of the SVS algorithm, then
$$
H\ge \frac{1}{2\mu_d}\log(q^{r-1}) (1+\mathcal{O}(q^{-1})).
$$
\end{theorem}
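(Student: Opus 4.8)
The plan is to observe that, by the point where Theorem~\ref{th: entropy} is stated, every ingredient is already in place, so the proof reduces to a short computation. I would start from \eqref{eq: ineq entropy convex function}, which together with the exact count \eqref{eq: number of terms entropy} of pairs $(F,(\bfs a,x))$ with $F(\bfs a,x)=0$ and the estimate $\mathcal N=2\mu_d\,q^{2r-2+\dim\mathcal{F}_{r,d}}(1+\mathcal{O}(q^{-1}))$ of Lemma~\ref{lemma: sum for entropy} yields
$$H\ge q^{r-1}\,h\!\left(2\mu_d\,q^{r-1}(1+\mathcal{O}(q^{-1}))\right)(1+\mathcal{O}(q^{-1})),\qquad h(x):=\frac{\log x}{x},$$
since the quotient inside $h$ simplifies, the factor $q^{\dim\mathcal{F}_{r,d}}$ cancelling. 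It then only remains to unwind $h$ at this argument.

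Carrying out that unwinding, I would write
$$q^{r-1}\,h\!\left(2\mu_d\,q^{r-1}(1+\mathcal{O}(q^{-1}))\right)=\frac{\log\!\big(2\mu_d\,q^{r-1}\big)+\log(1+\mathcal{O}(q^{-1}))}{2\mu_d(1+\mathcal{O}(q^{-1}))}=\frac{\log(q^{r-1})+\log(2\mu_d)}{2\mu_d}\,(1+\mathcal{O}(q^{-1})),$$
using $\log(1+\mathcal{O}(q^{-1}))=\mathcal{O}(q^{-1})$ and absorbing this additive error, together with the factor $(1+\mathcal{O}(q^{-1}))^{-1}=1+\mathcal{O}(q^{-1})$, into the overall multiplicative error (legitimate because $\log(q^{r-1})\to\infty$, so a term $\mathcal{O}(q^{-1})$ is swallowed by $\log(q^{r-1})\,\mathcal{O}(q^{-1})$). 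Finally I would note that $\mu_d=\sum_{j=1}^d(-1)^{j-1}/j!$ attains its minimum over $d\ge2$ at $\mu_2=1/2$, hence $2\mu_d\ge1$ and $\log(2\mu_d)\ge0$; dropping this nonnegative term gives
$$H\ge\frac{1}{2\mu_d}\log(q^{r-1})\,(1+\mathcal{O}(q^{-1})),$$
which is the assertion.

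As for the difficulty: the proof of the theorem itself presents essentially no obstacle — it is bookkeeping of error terms plus the elementary fact $\mu_d\ge1/2$. The real work lies upstream, in Lemma~\ref{lemma: prob (a,x) output nind model} (the hypergeometric identity giving $P_{\bfs x,F}=1/(NS(F)N_{\bfs a}(F))$, hence the entropy formula \eqref{eq: entropy random}), in Lemma~\ref{lemma: sum for entropy} (the inclusion–exclusion evaluation of $\mathcal N$, where one splits according to $\bfs b=\bfs a$ vs.\ $\bfs b\neq\bfs a$ and $x=y$ vs.\ $x\neq y$), and in the Jensen-type step \eqref{eq: ineq entropy convex function}, whose validity rests on $h$ being increasing on $[e,\infty)$ and convex on $[e^{3/2},\infty)$ together with Corollary~\ref{coro: NS(F) deviating from NS(d,r)}, which guarantees that the polynomials with very few vertical strips — precisely the regime where $h$ fails these properties — contribute only $\mathcal{O}(q^{-1})$ and thus do not spoil the bound.
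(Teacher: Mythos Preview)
Your proof is correct and follows the same approach as the paper, which simply states the displayed inequality immediately preceding the theorem and concludes. You in fact supply a detail the paper suppresses: the step from $q^{r-1}h(2\mu_d q^{r-1}(1+\mathcal O(q^{-1})))$ to $\frac{1}{2\mu_d}\log(q^{r-1})(1+\mathcal O(q^{-1}))$ requires dropping the additive $\log(2\mu_d)$ (which is only $\mathcal O(1/\log q)$, not $\mathcal O(q^{-1})$, relative to the main term), and your observation that $\mu_d\ge\mu_2=1/2$ makes this legitimate as a lower bound.
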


Recall that, according to \eqref{eq: bound entropy ideal algorithm},
for an algorithm for which the outputs are equidistributed we have
the upper bound $H\le \log(q^{r-1})$. For large $d$ we have
$$\frac{1}{2\mu_d}\approx \frac{1}{2(1-e^{-1})}\approx 0.79.$$
We may therefore paraphrase Theorem \ref{th: entropy} as saying that
the SVS algorithm is at least $79$ per cent as good as any ``ideal''
algorithm.
%
%
\section{Simulations on test examples}
\label{section: simulations}
We end the paper with a description of the results on the number of
searches that were obtained by executing the SVS algorithm on random
samples of elements $\mathcal{F}_{r,d}$, for given values of $q$,
$r$ and $d$. Recall that $C:\FF\times \mathcal{F}_{r,d}\mapsto
\N\cup\{\infty\}$ denotes the random variable which counts the
number of searches that are performed for all possible choices of
vertical strips. Theorem \ref{th: prob C=s for nind} shows that
$$P[C=s]\approx (1-\mu_d)^{s-1}\mu_d.$$
The simulations we exhibit were aimed to test whether the
right--hand side of the previous expression approximates the
left--hand side on the examples considered. For a random sample
$\mathcal{S}\subset \mathcal{F}_{r,d}$ and $\underline{\bfs
a}\in\FF_s$, we use the following notations:
$$p_{\underline{\bfs a}}:=p_{r,d}[\mathcal{S}\cap
C_{\underline{\bfs a}}=s],\quad
\widehat{p}_s:=(1-\mu_d)^{s-1}\mu_d.$$
We take $N:=30$ choices of $\underline{\bfs a}\in \FF_s$, and
compute the sample mean
$$\overline{p}_s:=\sum_{i=1}^N \frac{p_{\underline{\bfs
a}_i}}{N}.$$
Furthermore, we consider the corresponding relative errors:
$$\epsilon_{s}:=\frac{|\overline{p}_s-\widehat{p}_s|}{\widehat{p}_s}.$$
Finally, we compare the average number $\overline{N}{}^{\,q}_{r,d}$
of vertical strips searched with its theoretical upper bound
according to Theorem \ref{th: average-case compl r>2}, namely
$1/\mu_d$.

We consider only relatively moderate values of $s$, since for higher
values the probability $p_{\underline{\bfs a}}$ is so small that the
corresponding information becomes uninteresting. This also explains
the fact that relative errors $\epsilon_s$ tend to grow as $s$
grows. Finally, we remark that, although polynomials without
$\fq$--rational zeros occur in some of the experiments described
below, the number of such polynomial is so small that it does not
affect the average behavior of our simulations.
%
%
\subsection{Examples with $r:=2$ and $q:=67$ and $q:=8$}
In this section we consider random samples of bivariate polynomials
with coefficients in the finite field $\F_{\!67}$. In Table
\ref{table: q=67, r=2, d=30} we consider a random sample
$\mathcal{S}$ of $1000000$ polynomials of $\F_{\!67}[X_1,X_2]$ of
degree at most $d:=30$ and analyze how many vertical strips are
searched on this sample. Therefore,
%
%
%
we have $\widehat{p}_s:=(1-\mu_{30})^{s-1}\mu_{30}$, where
$\mu_{30}:=0.6321205588\dots$. Further, we have
$\overline{N}{}^{\,67}_{2,30}=1.574924\dots$, to be compared with
$1/\mu_{30}=1.581977\dots$.

\begin{center}
\begin{table}[h]
\caption{Random sample with $q=67$, $r=2$ and $d=30$.}
\label{table: q=67, r=2, d=30}
\begin{tabular}{|c|c|c|c|c|c|}
     \hline
    $s$  &  $\overline{p}_s$ & $\widehat{p}_s$ & $\epsilon_s$ \\
    \hline
    $1$  &  $0.635031$ & $0.632121$ &  $0.004583$ \\
    \hline
    $2$  &  $0.231664$ & $0.232544$ &  $0.003799$ \\
    \hline
    $3$  & $0.084627$  & $0.085548$ &  $0.010889$ \\
    \hline
    $4$  & $0.030921$ & $0.031471$  &  $0.017789$ \\
    \hline
    $5$  & $0.011279$ & $0.011578$  &  $0.026473$ \\
    \hline
    $6$  & $0.004101$ & $0.004259$  &  $0.038575$ \\
    \hline
    $7$  & $0.001509$ & $0.001567$  &  $0.038166$ \\
    \hline
    $8$  & $0.000553$ & $0.000576$  &  $0.042349$ \\
    \hline
    $9$  &  $0.000199$ & $0.000212$ &  $0.067918$ \\
    \hline
    $10$ & $0.000076$ & $0.000078$  &  $0.030513$ \\
    \hline
    $11$ & $0.000025$ & $0.000029$  &  $0.161872$ \\
    \hline
    $12$ & $0.000010$ & $0.000011$  &  $0.038441$ \\
    \hline
    $13$ & $0.000038$ & $0.000003$  &  $0.022074$ \\
    \hline
    $14$ & $0.000011$ & $0.000001$  &  $0.339501$ \\
    \hline
    $15$ & $0.000001$  & $0.000001$ &  $0.051253$ \\
    \hline
\end{tabular}
\end{table}
\end{center}

Our second example concerns a sample $1000000$ polynomials of
$\F_{\!67}[X_1,X_2]$ of degree at most $d:=5$. We have
$\widehat{p}_s:=(1-\mu_{5})^{s-1}\mu_{5} $, where
$\mu_{5}:=0.6333333\dots$. The corresponding results are summarized
in Table \ref{table: q=67, r=2, d=5}. We observe that
$\overline{N}{}^{\,67}_{2,5}=1.572816\dots$, to be compared with
$1/\mu_5=1.578947\dots$.

\begin{center}
\begin{table}[h]
\caption{Random sample with $q=67$, $r=2$ and $d=5$.}
\label{table: q=67, r=2, d=5}
\begin{tabular}{|c|c|c|c|c|c|c|c|}
      \hline
    $s$  & $\overline{p}_s$ & $\widehat{p}_s$ &  $\epsilon_s$ \\
\hline
    $1$ & $0.635885$ & $0.633333$ & $0.004012$ \\
    \hline
    $2$ & $0.231459$ & $0.232222$ & $0.003298$ \\
    \hline
    $3$ & $0.084318$ & $0.085148$ & $0.009844$ \\
    \hline
    $4$ & $0.030727$ & $0.031221$ & $0.016085$ \\
    \hline
    $5$ & $0.011188$ & $0.011448$ & $0.023224$ \\
    \hline
    $6$ & $0.004091$ & $0.004197$ & $0.025996$ \\
    \hline
    $7$ & $0.001481$ & $0.001539$ & $0.039029$ \\
    \hline
    $8$ & $0.000543$ & $0.000564$ & $0.040109$ \\
    \hline
    $9$ & $0.000195$ & $0.000207$ & $0.056976$ \\
    \hline
    $10$& $0.000069$ & $0.000076$ & $0.085938$ \\
    \hline
    $11$& $0.000029$ & $0.000028$ & $0.030685$ \\
    \hline
    $12$& $0.000009$ & $0.000010$ & $0.129198$ \\
    \hline
    $13$& $0.000003$ & $0.000003$ & $0.133380$ \\
    \hline
    $14$& $0.000002$ & $0.000001$ & $0.085740$ \\
    \hline
    $15$& $0.000001$ & $0.000001$ & $0.057169$ \\
    \hline
\end{tabular}
\end{table}
\end{center}

We end this section by considering polynomials with coefficients in
a non--prime field, namely $\F_{\!8}[X_1,X_2]$. In this case,
$\widehat{p}_s:=(1-\mu_3)^{s-1}\mu_3$, where $\mu_3:=0.666666\dots$.
In Table \ref{table: q=8, r=2, d=3} the results for a sample of
$100000$ polynomials of degree at most $d:=3$ are exhibited. We have
$\overline{N}{}^{\,8}_{3,3}=1.504512\dots$, to be compared with
$1/\mu_3=1.5$.
\begin{center}
\begin{table}[h]
\caption{Random sample with $q=8$, $r=3$ and $d=3$.}
\label{table: q=8, r=2, d=3}
\begin{tabular}{|c|c|c|c|c| c| c|}
      \hline
 $s$ & $\overline{p}_s$ & $\widehat{p}_s$ & $\epsilon_s$ \\
\hline
    $1$  &  $0.663161$ & $0.666666$ & $0.005259$ \\
    \hline
    $2$  &  $0.222801$ & $0.222222$ & $0.002605$ \\
    \hline
    $3$  &  $0.075617$ & $0.074074$ & $0.014151$ \\
    \hline
    $4$  &  $0.025319$ & $0.024691$ & $0.020831$ \\
    \hline
    $5$  &  $0.008725$ & $0.008230$ & $0.060146$ \\
    \hline
    $6$  &  $0.002859$ & $0.002743$ & $0.042289$ \\
    \hline
\end{tabular}
\end{table}
\end{center}
%
%
\subsection{Examples with $r:=3$ and $q:=11$ and $q:=67$}
Finally, we consider two samples of $1000000$ polynomials of
$\fq[X_1,X_2,X_3]$. The first sample contains polynomials of degree
at most $d:=5$ with coefficients in $\F_{\!11}$, while the second
one contains polynomials of degree at most $d:=5$ with coefficients
in $\F_{\!67}$. Results are exhibited in Tables \ref{table: q=11,
r=3, d=5} and \ref{table: q=67, r=3, d=5} respectively. The average
numbers of searched vertical strips are
$\overline{N}{}^{\,11}_{3,5}=1.539646\dots$ and
$\overline{N}{}^{\,67}_{3,5}=1.572975\dots$, both to be compared
with $1/\mu_{5}=1.578947\dots$.
\begin{center}
\begin{table}[h]
\caption{Random sample with $q=11$, $r=3$ and $d=5$.}
\label{table: q=11, r=3, d=5}
\begin{tabular}{|c|c|c|c|c|c|}
      \hline
 $s$ & $\overline{p}_s$ & $\widehat{p}_s$ & $\epsilon_s$ \\
\hline
    $1$ & $0.649494$ & $0.633333$  & $0.024881$ \\
    \hline
    $2$ & $0.227637$ & $0.232222$  & $0.020145$ \\
    \hline
    $3$ & $0.079769$ & $0.085148$  & $0.067430$ \\
    \hline
    $4$ & $0.027999$ & $0.031221$  & $0.115075$ \\
    \hline
    $5$ & $0.009822$ & $0.011448$  & $0.165519$ \\
    \hline
    $6$ & $0.003419$ & $0.004198$  & $0.227683$ \\
    \hline
    $7$ & $0.001213$ & $0.001539$  & $0.269344$ \\
    \hline
    $8$ & $0.000421$ & $0.000564$  & $0.340555$ \\
    \hline
    $9$ & $0.000149$ & $0.000207$  & $0.382851$ \\
    \hline
   $10$ & $0.000050$ & $0.000076$  & $0.504379$ \\
    \hline
   $11$ & $0.000017$ & $0.000028$  & $0.662509$ \\
    \hline
   $12$ & $0.000002$ & $0.000010$  & $0.500062$ \\
    \hline
   $13$ & $0.000002$ & $0.000004$  & $0.726225$ \\
    \hline
   $14$ & $0.000001$ & $0.000001$  & $0.523767$ \\
    \hline
   $15$ & $0.000000$ & $0.000001$  & $2.017058$ \\
    \hline
\end{tabular}
\end{table}
\end{center}
\begin{center}
\begin{table}[h]
\caption{Random sample with $q=67$, $r=3$ and $d=5$.}
\label{table: q=67, r=3, d=5}
\begin{tabular}{|c|c|c|c|c| c| c|}
      \hline
 $s$ & $\overline{p}_s$ & $\widehat{p}_s$ & $\epsilon_s$ \\
\hline
    $1$  & $0.635802$ & $0.633333$ & $0.003883$ \\
    \hline
    $2$  & $0.231571$ & $0.232222$ & $0.002810$ \\
    \hline
    $3$  & $0.084285$ & $0.085148$ & $0.010237$ \\
    \hline
    $4$  & $0.030732$ & $0.031221$ & $0.015898$ \\
    \hline
    $5$  & $0.011192$ & $0.011447$ & $0.022809$ \\
    \hline
    $6$  & $0.004081$ & $0.004197$ & $0.028645$ \\
    \hline
    $7$  & $0.001482$ & $0.001539$ & $0.038865$ \\
    \hline
    $8$  & $0.000541$ & $0.000564$ & $0.042865$ \\
    \hline
    $9$  & $0.000199$ & $0.000207$ & $0.039628$ \\
    \hline
    $10$ & $0.000071$ & $0.000076$ & $0.062618$ \\
    \hline
    $11$ & $0.000027$ & $0.000028$ & $0.017780$ \\
    \hline
    $12$ & $0.000010$ & $0.000010$ & $0.003320$ \\
    \hline
    $13$ & $0.000003$ & $0.000004$ & $0.078891$ \\
    \hline
    $14$ & $0.000001$ & $0.000001$ & $0.111938$ \\
    \hline
    $15$ & $0.000000$ & $0.000001$ & $0.257107$ \\
    \hline
\end{tabular}
\end{table}
\end{center}

Summarizing, the results of Tables \ref{table: q=67, r=2,
d=30}--\ref{table: q=67, r=3, d=5} show that the behavior predicted
by the asymptotic estimates of Theorems \ref{th: prob C=s for nind}
and \ref{th: average-case compl r>2} is also appreciated in the
numerical experiments we perform. Nevertheless, as the cost of the
SVS algorithm grows exponentially with the number $r$ of variables
under consideration, our experiments only considered the cases $r=2$
and $r=3$.
\section*{Acknowledgements}
The authors gratefully acknowledge the comments by the anonymous
referees, which helped to significantly improve the presentation of
the results of this paper.
%
%
\providecommand{\bysame}{\leavevmode\hbox
to3em{\hrulefill}\thinspace}
\providecommand{\MR}{\relax\ifhmode\unskip\space\fi MR }
\providecommand{\MRhref}[2]{%
  \href{http://www.ams.org/mathscinet-getitem?mr=#1}{#2}
} \providecommand{\href}[2]{#2}

%
\end{document}